\theoremstyle{plain}
\newtheorem{theorem}{Theorem}
\newtheorem{lemma}[theorem]{Lemma}
\newtheorem{corollary}[theorem]{Corollary}
\newtheorem{proposition}[theorem]{Proposition}
\newtheorem*{theorem*}{Theorem}
\newtheorem*{conjecture*}{Conjecture}
\theoremstyle{definition}
\newtheorem{remark}[theorem]{Remark}
\newtheorem{example}[theorem]{Example}
\newtheorem{notation}[theorem]{Notation}
\newcommand{\breakingcomma}{%
  \begingroup\lccode`~=`,
  \lowercase{\endgroup\expandafter\def\expandafter~\expandafter{~\penalty0 }}}
\newcolumntype{L}{>{$}l<{$}}
\newcommand{\CC}{{\mathbb{C}}}
\newcommand{\PP}{{\mathbb{P}}}
\newcommand{\QQ}{{\mathbb{Q}}}
\newcommand{\ZZ}{{\mathbb{Z}}}
\newcommand{\eps}{\varepsilon}
\newcommand{\SL}{{\mathrm{SL}}}
\newcommand{\Jac}{\mathrm{Jac}}
\newcommand{\Fix}{\mathrm{Fix}}
\newcommand{\id}{\mathrm{id}}
\newcommand{\bx}{{\bf x}}
\newcommand{\by}{{\bf y}}
\newcommand{\bz}{{\bf z}}
\newcommand{\hess}{\mathrm{hess}}
\newcommand{\age}{\mathrm{age}}
\def\A{{\mathcal A}}
\def\C{{\mathcal C}}
\def\Cl{{\mathrm{Cl}}}
\def\p{\partial }
\newcommand{\rmH}{{{\rm H}}}
\newcommand{\LL}{{\Upsilon}}
\DeclareMathOperator{\sgn}{sgn}
\newcommand{\ccHH}{{\mathsf{HH}}}
\newcommand\tet{\theta}
\newcommand\sig{\sigma}
\newcommand\ten{\otimes}
\begin{document}
\title[Hochschild cohomology of Fermat polynomials with non--abelian symmetries]{Hochschild cohomology of Fermat type polynomials with non--abelian symmetries}
\date{\today}
\author{Alexey Basalaev}
\address{A. Basalaev:\newline Faculty of Mathematics, National Research University Higher School of Economics, Usacheva str., 6, 119048 Moscow, Russian Federation, and \newline
Skolkovo Institute of Science and Technology, Nobelya str., 3, 121205 Moscow, Russian Federation}
\email{a.basalaev@skoltech.ru}
\author{Andrei Ionov}
\address{A. Ionov:\newline Faculty of Mathematics, National Research University Higher School of Economics, Usacheva str., 6, 119048 Moscow, Russian Federation, and \newline
Department of Mathematics, Massachusetts Institute of Technology, 77 Massachusetts
Ave., Cambridge, MA 02139, United States }
\email{aionov@mit.edu}

\begin{abstract}
For a polynomial $f = x_1^n + \dots + x_N^n$ let $G_f$ be the non--abelian maximal group of symmetries of $f$. This is a group generated by all $g \in \mathrm{GL}(N,\CC)$, rescaling and permuting the variables, so that $f(\bx) = f(g \cdot \bx)$.
For any $G \subseteq G_f$ we compute explicitly Hochschild cohomology of the category of $G$--equivarint matrix factorizations of $f$. We introduce the pairing on it showing that it is a Frobenius algebra.
\end{abstract}
\maketitle

\setcounter{tocdepth}{1}
\tableofcontents

\section{Introduction}

Equivariant approach to singularity theory had been under consideration in many different publications.
In particular, it appeared to be useful for such purposes as constructing new moduli spaces and integrable hierarchies in \cite{FJR}, cohomological field theories in \cite{PV} and quantum field theories in \cite{IV90}. One should notice that the certain equivariant approach ideas were applied in the classical singularity theory as well (cf. \cite{Or}). However, the role of equivariant approach to singularity theory is in particular important in mirror symmetry. 

%
Consider a pair $(f,G)$, where $f = f(\bx)$ is a polynomial having only isolated critical points and $G \subseteq \mathrm{GL}(N,\CC)$ a group of elements $g$, s.t. $f(g \cdot \bx) = f(\bx)$. In order to put mirror symmetry in a rigorous context, one has to provide the canonical construction of a Frobenius algebra of a pair $(f,G)$. 

\subsection*{Diagonal groups}
Let $G_f^d$ stand for {\em the group of maximal diagonal symmetries} of $f$.
$$
G_f^d := \left\{ (\lambda_1, \ldots , \lambda_N )\in(\CC^\ast)^N \, \left| \, f(\lambda_1x_1, \ldots, \lambda_n x_N) = 
f(x_1, \ldots, x_N) \right. \right\} .
$$ 
For $G \subseteq G_f^d$ the Frobenius algebra of $(f,G)$ was considered in \cite{K03, K09, BTW16, BTW17}. All these publications made an attempt to construct a new object employing some essential ideas --- it was widely agreed how the vector space of the pair $(f,G)$ should look and it remained to introduce the product structure and the pairing in a canonical manner. 
Such a product was given non-uniquely in a wide contex in \cite{K03} for all polynomials $f$ and $G \subseteq G_f^d$; in \cite{K09} it was fixed by the particular formula for the special class of $f$, so--called \textit{invertible polynomials} and $G \subseteq G_f^d \cap \SL(N)$; it was introduced axiomatically in \cite{BTW16} for all polynomials $f$ and $G \subseteq G_f^d$. It was also proved in \cite{BTW16} that a Frobenius algebra, satisfying such axioms is unique up to isomorphism.

However, there is an essential candidate for Frobenius algebra of $(f,G)$ --- Hochschild cohomology $\ccHH^\ast(\mathrm{MF}(f,G))$ of the category of $G$--equivarint matrix factorizations of $f$. The only reason why it was not widely used in mirror symmetry was computational --- this cohomology ring was very hard to compute.

In \cite{S20} D. Shklyarov developed the technique allowing one to compute $\ccHH^\ast(\mathrm{MF}(f,G))$ systematically. Despite the fact that this Hochschild cohomology is defined for the groups $G$ that are not necessarily abelian, Shklyarov's technique fully works only for $G \subseteq G_f^d$. 
This result of Shklyarov was used in \cite{BT2} for $f$ being invertible polynomial, to show that the construction of \cite{BTW16} provides the algebra isomorphic to $\ccHH^\ast(\mathrm{MF}(f,G))$ if $G \subseteq G_f^d$. 

\subsection*{Non-abelian groups}
Up to now very few results can be found in literature, concerning the pairs $(f,G)$ with a non--abelian symmetry group $G$. 
In \cite{EGZ18} the authors have considered the pairs $(f,G)$ topologically from the point of view of Milnor fibre. In \cite{PWW} the authors considered the A--model vector space for the certain examples of such pairs $(f,G)$. 

In particular, for $G$ not being diagonal, up to now there was no attempt to investigate a Frobenius algebra of a pair $(f,G)$. We address this question in the current paper. Namely, for fixed positive integers $N$ and $n$ we consider a polynomial
\[
  f=f_{N,n}(x_1,\ldots,x_N) := x_1^n+\ldots+x_N^n
\]
together with its group of symmetries $G_f = S_N \ltimes G_f^d$, where $S_N$ acts by permuting the coordinates $x_\bullet$. The pairs $(f_{N,n},G_f)$ were considered in the physical context in \cite{G91}. The certain approach towards the construction of a Frobenius manifold for such singularities was done in \cite{I16}.

\subsection*{In this paper}
We adopt the technique of Shklyarov to compute the basis and product of $\ccHH^\ast(\mathrm{MF}(f,G))$ for any $G \subseteq G_f$. According to this technique one has to consider the Hochschild cochain mixed complex of $(\CC[\bx]\rtimes G,f)$ with the coefficients in $\CC[\bx]\rtimes G$. Denoting its cohomology by $\ccHH^*(\CC[\bx] \rtimes G,f)$, we have $\ccHH^*(\CC[\bx] \rtimes G,f) \cong \ccHH^\ast(\mathrm{MF}(f,G))$. 
Consider also the Hochschild cochain mixed complex of $(\CC[\bx],f)$ with the coefficients in $\CC[\bx]\rtimes G$. Its cohomology $\ccHH^*(\CC[\bx],f; \CC[\bx] \rtimes G)$ has natural $G$--action, so that we have $\ccHH^*(\CC[\bx] \rtimes G,f) \cong \left( \ccHH^*(\CC[\bx],f; \CC[\bx] \rtimes G) \right)^G$. The $\cup$--product of $\ccHH^*(\CC[\bx],f; \CC[\bx] \rtimes G)$ can be computed combinatorially with the help of HKR isomorphism, found explicitly in \cite{S20}. However this isomorphism is not $G$--equivariant for non--abelian groups. This makes the problem of finding the $G$--action explicitly complicated. 

In this paper for any $G \subseteq G_f$ we resolve the $\cup$--product of $\ccHH^*(\CC[\bx],f; \CC[\bx] \rtimes G)$ via the explicit formulae. The latter cohomology ring can be endowed with the bigrading following the ideas of \cite{IV90}. We show that the $\cup$--product preserves this bigrading. Introducing the certain pairing we show also that $\ccHH^*(\CC[\bx],f; \CC[\bx] \rtimes G)$ is a Frobenius algebra.

Using the formulae for the $\cup$--product, we compute the $G$--action on ${\ccHH^*(\CC[\bx],f; \CC[\bx] \rtimes G)}$ explicitly. We show that $\ccHH^*(\CC[\bx] \rtimes G,f)$ is a Frobenius algebra too.

Even though we only deal with the polynomials of Fermat type, our approach is applicable to the case of arbitrary invertible polynomials as well. It's also an interesting question to consider the Hochschild cohomology of invertible polynomials in the context of Berglund-H\"ubsch-Henningson duality (cf. \cite{EGZ18,EGZ20}). We plan to do this is in a upcoming paper \cite{BI2}.

We also applied the results of the present paper to generalize the mirror map of \cite{K09} to the case of Fermat polynomials with non-abelian group of symmetries in \cite{BI21}.

The paper is organized as follows.
We fix notation and introduce some notions in Section~\ref{section: notations}. Section~\ref{section: vector space} is devoted to the definiton of the phase space $\A'_{f,G}$ --- a bigraded vector space with the pairing that will be later endowed with the $\cup$--product of Hochschild cohomology ring $\ccHH^*(\CC[\bx],f; \CC[\bx] \rtimes G)$. The structure constants of this product are introduced in Section~\ref{section: HH} where we explain the technique of Shklyarov. In Section~\ref{section: multiplication table} we develop a systematic approach how to compute these structure constants. It takes full Section~\ref{section: products in mixed sectors} to compute all structure constants needed. Finally in Section~\ref{section: G--action} we find the action of $G$ on $\A'_{f,G}$. This allows us to consider the $G$--invariants of $\A'_{f,G}$ and prove our main theorem showing that $\ccHH^*(\CC[\bx]\rtimes G,f)$ is a Frobenius algebra with the pairing introduced in Section~\ref{section: vector space}.
\subsection*{Acknowledgement}
~
\\
The work of A.B. was partially supported by International Laboratory of Cluster Geometry NRU HSE, RF Government grant, ag. N\textsuperscript{\underline{\scriptsize o}} 075-15-2021-608.
The work of A.I. was supported by RSF grant no. 19-71-00086.

The authors want to express a great respect to the anonymous referee for many corrections and deep comments.

\section{Preliminary notations}\label{section: notations}
Let $N$ and $n$ be positive integers. Consider a polynomial
\[
  f=f_{N,n}(x_1,\ldots,x_N) := x_1^n+\ldots+x_N^n.
\]
Such polynomials are called to be of Fermat type. The set of critical points of $f$ consists of $\bx = 0$ only and it makes sense to consider Jacobian algebra of $f$.
\[
    \Jac(f) := \CC[x_1,\dots,x_N] / (\p_{x_1}f,\dots,\p_{x_N}f).
\]
This is a finite--dimenional $\CC$--algebra with the canonical product that we denote by $\circ$. Moreover it is a \textit{Frobenius algebra}. Namely there is a non--degenerate symmetric bilinear form $\eta: \Jac(f) \otimes \Jac(f) \to \CC$, s.t. $\eta(a \circ b, c) = \eta(a ,b \circ c)$ for all $a,b,c \in \Jac(f)$. We will introduce this pairing explicitly beneath.
In what follows for any polynomial $p = p(\bx)$ we denote by $\lfloor p \rfloor$ its class in $\Jac(f)$.

The action of $S_N$ permuting coordinates on $\CC^N$ defines an action of $S_N$ on $G_f^d$ by permutation of diagonal entries. Let $G_f$ be the group of symmetries $G_f = S_N \ltimes G_f^d$ with the product $(\sigma',h)(\sigma,g) = (\sigma'\sigma,\sigma^{-1}(h)g)$. In what follows assume $g \in G_f^d$ being represented by a diagonal matrix $\mathrm{diag}(g_1,\dots,g_N)$. Let an element $(\sig,g)\in G_f$ act on $\bx \in \CC^N$ by $(\sig,g)(x_k) = g_kx_{\sig(k)}$. 

For any $G \subseteq G_f$ there are two natural subgroups of $G$
\begin{align*}
G^s :=\{ (\sig,\id)\} \quad \text{ and } \quad G^d := \{ (\id,g)\}. 
\end{align*}
In particular we have $(G_f)^s \simeq S_N$ and $(G_f)^d \simeq G_f^d$.
By abuse of notations we also write 
$\sig$ for an element $(\sig,\id)\in G^s$ and $g$ for an element $(\id,g)\in G^d$. Note that in this notation we have $(\sig,g)=\sig \cdot g$.

Denote by $(i_1,\ldots, i_k)$ the element $\sig\in S_N$ such that $\sig(i_a) = i_{a+1}$, $\sig(i_k)=i_{1}$ and $\sig(j)=j$ for $j\not\in\{i_1,\ldots, i_k\}$. 
We call an element $(\sig,g)\in G_f$ a {\itshape cycle of length $k$} if $\sig=(i_1,\ldots, i_k)$ and, moreover, $g_j=1$ for $j\not\in\{i_1,\ldots, i_k\}$. 

\begin{notation}
    In what follows denoting length $k$ cycles we assume $i_1$ to be the smallest element in $\{i_1,\ldots, i_k\}$.
\end{notation}


Let us fix a $n$-th primitive root of unity $\zeta_n :=\exp(2 \pi \sqrt{-1}/n)$. Denote by $t_i$ the element of  $G_f^d$ defined by 
\[
    t_i(x_j) = \begin{cases}
                \zeta_n \cdot x_i, \quad &\text{ if } i =j,
                \\
                1 \cdot x_j \quad &\text{ otherwise}.
               \end{cases}
\]
We call an element $(\sigma,g) \in G_f$ {\itshape special} if $g_1\ldots g_N=1$, i.e. if $g \in \mathrm{SL}_N(\CC)$, and \textit{non--special} otherwise.

\subsection{Fixed loci} \label{section: fixed sets}
For any $u \in G_f$ denote by $\Fix(u)\subseteq\CC^N$ the linear subspace of the fixed points of $u$, seen as an element of $\mathrm{GL}_N(\CC)$. For $u$ acting diagonally the set $\Fix(u)$ consists of the points with some of the coordinates set to zero. For $u = (\sigma,g)$ with $\sigma \neq \id$ the set $\Fix(u)$ is best seen after the diagonalization of $u$.

Denote $N_u := \dim_\CC\Fix(u)$ and $d_u\colon=N-N_u$.
For each $u \in G_f$ denote by $I_{u}^c$ the set of all indices $i$, s.t. $u(x_i) \neq x_i$, and by $I_{u}$ the complement of $I_{u}^c$ in $\{1,\dots,N\}$. 
In particular, for $g\in G_f^d$ we get $I_g = \{i_1,\dots,i_{N_g}\}$ --- the set of all indices $j$ such that $g_j=1$ and $I_{\id}=\{1,\dots, N\}$.

The elements $u_1,\dots,u_k \in G_f$ will be called \textit{non--intersecting} if $I_{u_i}^c \cap I_{u_j}^c = \emptyset$ for all~$i \neq j$.

\begin{remark}
    Trying to keep the notation consistent with all the other work done before us we have to note that the notation for $I_g$ in \cite{BTW16,BTW17,BT2} is opposite to \cite{S20}. We stick to that being historically first.
\end{remark}

Decompose further $\Fix(u) = \Fix^T(u) \oplus \Fix^E(u)$ into a \textit{tautological fixed locus} $\Fix^T(u) := \{x\in\CC^N~|~x_{j}=0, j\in I_{(\sig,g)}^c \}$ and a \textit{eigen fixed locus} $\Fix^E(u)$.
For any element $u \in G_f$ denote by $f^{u}$ the restriction of $f$ to the fixed locus $\Fix(u)$. 

The following example will describe the fixed locus of a special cycle. This example will be used throughout the paper.

\begin{example}\label{example: special cycle}
Consider special cycle $(\sigma,g) \in G_f$ with ${\sigma = (i_1,\dots,i_k)}$. Recall that in our convention $g$ acts trivially on $x_j$ with $j \not\in \{i_1,\dots,i_k\}$. 

For $\zeta_k = \exp(2 \pi \sqrt{-1} /k)$ consider the change of the variables
\[
  \widetilde x_{i_b} := \frac{1}{k}\sum_{a=1}^k \zeta_k^{(1-b)(a-1)} \widetilde g_{i_a} x_{i_a}, \quad 1 \le b \le k,
\]
with $\widetilde g_{i_1} := 1$ and $\widetilde g_{i_a} := g_{i_1}\cdot\dots\cdot g_{i_{a-1}}$ for $ 1 \le a \le k$.
Then we have
\[
  (\sigma,g)(\widetilde x_{i_b}) = \zeta_k^{b-1} \widetilde x_{i_b}.
\]
In particular, the variable $\widetilde x_{i_1}$ is preserved by the action of $(\sigma,g)$. We have:
\[
    x_{i_a} = \frac{1}{\widetilde g_{i_a}} \sum_{b=1}^k \zeta_k^{(b-1)(a-1)} \widetilde x_{i_b}, \quad 1 \le a \le k,
\]
and also
\[
  f^{(\sigma,g)} = k \cdot \widetilde x_{i_1}^n + \sum_{\substack{l \not\in \{i_1,\dots,i_k\} \\ 1 \le l \le N}} x_l^n.
\]
We have 
\begin{align}
    \Fix^T((\sigma,g)) &= \{x \in \CC^N | \ x_{i_1} = \dots = x_{i_k} = 0 \},
    \\
    \Fix^E((\sigma,g)) &= \{ x \in \CC^N | \ x_j = 0, \ j \not\in \{i_1,\dots,i_k \} \text{ and } x_{i_a} = t \cdot \widetilde g_{i_a}^{-1}, \  t\in \CC \}. 
\end{align}
For every $x_{i_a}$, considered as a function of $\widetilde x_\bullet$ we have  
\begin{align}
&\left. x_{i_1}\right|_{ \Fix((\sigma,g))} =\left. \widetilde g_{i_2} x_{i_2}\right|_{ \Fix((\sigma,g))}=\ldots=\left. \widetilde g_{i_k} x_{i_k}\right|_{ \Fix((\sigma,g))}=\left. \widetilde x_{i_1}\right|_{ \Fix((\sigma,g))},
\label{formula: coordinates on sector}
\\
& \left. \widetilde x_{i_b}\right|_{ \Fix((\sigma,g))}=0 \enskip \text{for $b>1$.}
\label{formula: coordinate vanishing on sector} 
\end{align} 
From this we conclude $\lfloor \left( \widetilde g_{i_p} x_{i_p} \right) ^q \rfloor =  \lfloor \widetilde x_{i_1}^q \rfloor$ in $\Jac(f^{(\sigma,g)})$.

In what follows denote 
\[
    \widetilde x_{(\sigma,g)} := \widetilde x_{i_1} = \frac{1}{k}\sum_{a=1}^k \widetilde g_{i_a} x_{i_a}.
\]
\end{example}

The following lemma is obvious.
\begin{lemma}\label{lemma: fixed set}
We have
\begin{description}
 \item[(1)] If $u \in G_f$ is a non--special cycle or $u \in G^d$. Then $\Fix^E(u) = 0$.
 \item[(2)] If $u \in G_f$ satisfies $u = \prod_{i=1}^k (\sigma_i,g_i)$ for some nonintersecting cycles $(\sigma_i,g_i)$. Then 
 \[
    \Fix^T(u) = \cap_{i=1}^k \Fix^T((\sig_i,g_i)) \quad \text{ and } \quad \Fix^E(u) = \bigoplus_{i=1}^k \Fix^E((\sig_i,g_i)).
 \] 
\end{description}
\end{lemma}


It follows from the lemma above that for any $u \in G_f$, $f^u$ either vanishes or is a polynomial of Ferma type again. In particular, it defines an isolated singularity in the latter case and it makes sense to consider its Jacobian algebra $\Jac(f^u)$.

For $f^u \equiv 0$ we set $\Jac(f^u) := \CC \cdot [1]$ --- the $1$--dimensional $\CC$--algebra generated by~$[1]$.


\section{The phase space}\label{section: vector space}
Let $G \subseteq G_f$. For any $u \in G$ let $\xi_u$ be a formal letter and $f^u$ stand for the restriction of $f$ to $\Fix(u)$ as in section above. 
Consider the $\CC$--vector space
\begin{equation}
    \A'_{f,G} := \bigoplus_{u \in G} \A'_{u} \text{ with } \A'_{u} := \Jac(f^u)\xi_u.
\end{equation} 
We will denote by $\lfloor \phi \rfloor \xi_u$ the elements of $\A'_{u}$, where $\lfloor \phi \rfloor \in \Jac(f^u)$. 
The subspace $\A'_{u}$ will be called \textit{$u$--sector}.


The vector space $\A'_{f,G}$ is $G$--graded by its construction. Introduce also the $\ZZ/2\ZZ$ grading on it by defining $\lfloor \phi \rfloor\xi_u$ to be of parity $\overline 0$ if $N - N_u \equiv 0 \mod 2$ and of parity $\overline 1$ if $N - N_u \equiv 1 \mod 2$.

Following Shklyarov~\cite{S20} we are going to equip this algebra with the product 
\begin{align}
    \cup: \A'_{f,G} \otimes \A'_{f,G} &\to \A'_{f,G}
    \\
    \lfloor \phi(\bx) \rfloor\xi_u \cup \lfloor \psi(\bx) \rfloor\xi_{v} &= \lfloor \phi(\bx)\psi(u(\bx)) \sigma_{u,v}\rfloor \xi_{uv},\quad \phi(\bx), \psi(\bx)\in \CC[\bx],
    \label{eq: HH cup}
\end{align}
where $\sigma_{u,v}$ are some polynomials that will be introduced in Section~\ref{section: HH}. 

It's important to note that the product above assumes the classes of polynomials in the different quotient rings. 
Special polynomials $\sigma_{u,v}$ are needed in order to make this product well-defined. These are given by Hochschild cohomology ring.
In particular, it follows that one may apply Eq.~\eqref{formula: coordinates on sector} and \eqref{formula: coordinate vanishing on sector} on both sides of Eq.\eqref{eq: HH cup} without any affect to the classes.
We illustrate this property in Remark~\ref{remark: product well-defined} after computing all $\sigma_{u,v}$.

One notes immediately that $\cup$--product respects the $G$--grading. It also respects the $\ZZ/2\ZZ$--grading. Namely, we have $\sigma_{u,v} = 0$ if $N-N_u + N-N_v \not\equiv N- N_{uv} \mod 2$.

\subsection{Pairing}
For $(\id, g) \in G$ we define the polynomial 
\[
 \mathcal{H}_{(\id, g)} :=  \prod n(n-1) x_a^{n-2},
\]
the product being taken over all $a \in I_{(\id,g)}^c$.

For a cycle $(\sig, g)$ with $\sig=(i_1, \ldots, i_k)$ we define the polynomial 
\[
 \mathcal{H}_{(\sig, g)} := 
 \begin{cases}
    (-1)^{k-1}n(n-1)\widetilde{x}_{(\sig, g)}^{n-2}, &\quad \text{ if $g$ is special}, 
    \\
    \det(g)^{-1}-1 &\quad \text{ otherwise,}
 \end{cases}
\]
for $\widetilde x_{(\sig, g)}$ as in Example~\ref{example: special cycle}.

For general $u\in G$ let $u = (\id,g_0)\cdot\prod_{i=1}^k (\sigma_i,g_i)$ be its decomposition into non--intersecting cycles. 
Denote
\[
p_i := \begin{cases}
        |\sigma_i|-1 \quad &\text{if $g_i$ is special},
        \\
        |\sigma_i| \quad &\text{if $g_i$ is non-special}.
        \end{cases}
\]
Then we set
\[
    \mathcal{H}_u := (-1)^{D} \prod_{i=0}^k \mathcal{H}_{(\sigma_i,g_i)}, \quad D = \sum_{i=1}^{k-1} p_i (p_{i+1} + \dots + p_k)=\sum_{1\le i<j\le k} p_ip_j.
\]

Up to a multiplicative constant we can see $\mathcal{H}_u$ as a Hessian determinant of $f^u$.

For an element $u\in G$ there is a non-degenerate symmetric bilinear form $\eta_{f^u}$ on $\Jac(f^u)$ defined up to multiplicative constant and endowing it with the structure of Frobenius algebras. We fix this constant by setting 
\[
\eta_{f^u}(\lfloor 1\rfloor,\lfloor \mathcal{H}_u\rfloor) = (n-1)^{N_u}.
\]
Moreover, $\eta_{f^u}(\lfloor 1\rfloor,-)$ vanishes on monomials in $\Jac(f^u)$ not proportional to $\lfloor \mathcal{H}_u\rfloor$. These properties define $\eta_{f^u}$ uniquely.

Consider the direct sum decomposition of $\A'_{f,G}$. We introduce the $\CC$--bilinear form $\eta_{f,G}$ on it by
\[
    \eta_{f,G} := \bigoplus_{u \in G} \eta_u,
\]
where \[ \eta_u \left(\lfloor \phi' \rfloor \xi_u, \lfloor \phi'' \rfloor \xi_{u^{-1}} \right)=\eta_{f^u}(\lfloor \phi' \rfloor, \lfloor \phi'' \rfloor). \]

It follows immediately from the definition that $\eta_{f,G}$ is non--degenerate. Moreover, it satisfies the following symmetry condition: if $u=(\sig, g)$ then
\[
\eta_{u}\left(\lfloor \phi' \rfloor \xi_u, \lfloor \phi'' \rfloor \xi_{u^{-1}} \right)=(-1)^{M_u}\det(g)\eta_{u^{-1}}\left(\lfloor \phi'' \rfloor \xi_{u^{-1}}, \lfloor \phi' \rfloor \xi_{u} \right), 
\]
where $M_u$ is the number of nonspecial cycles in $u$. This condition holds as we have \[(-1)^{M_u}\det(g)\mathcal{H}_u=\mathcal{H}_{u^{-1}}\] by construction of $\mathcal{H}_u$. In particular $\eta_{f,G}$ is $\ZZ/2\ZZ$--symmetric with $\ZZ/2\ZZ$-grading of $\lfloor\phi\rfloor\xi_u$ given by parity of $M_u$  if and only if $G^d\subseteq\SL_N(\CC)$.
We show in Theorem~\ref{theorem: Frobenius algebra before invariants} that if $G^d\subseteq\SL_N(\CC)$ it endows $\A'_{f,G}$ with a structure of Frobenius algebra and in Proposition~\ref{proposition: form G-invariant} that it is $G$-invariant.

\subsection{Bigrading}\label{section: bigrading}
The function $f$ assumed in this paper is quasi-homogeneous. This provides the grading of the polynomial ring $\CC[\bx]$. In what follows denote it by $\deg$.

For any $u \in G$ let $\lambda_1,\dots,\lambda_N \in \CC$ be the eigenvalues of the linear transformation $\CC^N \to \CC^N$ given by $\bx \mapsto u \bx$. We may assume $\lambda_k = \exp(2 \pi \sqrt{-1} a_k)$ for some $a_k \in \QQ \cap [0,1)$. Denote:
\[
    \age(u) := \sum_{k=1}^N a_k.
\]
Then for the inverse element $u^{-1}$ we have
\[
    \age(u) + \age(u^{-1}) = N - N_{u} = d_u.
\]

For any homogeneous polynomial $\phi = \phi(\bx)$ assume the element $\lfloor \phi \rfloor \xi_{u} \in\A'_{f,G}$. Define its {\itshape left charge} $q_l$  and {\itshape right charge} $q_r$ to be 
\[
    \left(q_l(\lfloor \phi\rfloor \xi_{u}), q_r(\lfloor \phi\rfloor \xi_{u}) \right) = \left( \frac{\deg \phi- d_{u}}{n}+{\rm\age}{(u)}, \frac{\deg \phi- d_{u}}{n}+{\rm\age}{(u^{-1})} \right).
\]
This definition endows $\A'_{f,G}$ with the structure of a $\QQ$-bigraded vector space. 
This is exactly the bigrading introduced in \cite{Muk, IV90}. 

It follows immediately that $q_\bullet (\xi_u) + q_\bullet (\xi_v) = q_\bullet (\xi_{uv})$ for $u,v \in G$, s.t. $\sigma_{u,v} \in \CC$. In particular, this holds for non--intersecting $u,v$ as will be explained later.
We show in Proposition~\ref{proposition: cup product preserves bigrading} that $\cup$--product preserves the bigrading introduced. 

\begin{example}
Consider a special cycle $(\sigma,g) \in G$ with $\sigma = (i_1,\dots,i_k)$. It acts diagonally in the basis dual to $\tilde x_{i_a}$ (see Example~\ref{example: special cycle}). We get the eigenvalues $\{ \lambda_{i_1}, \dots, \lambda_{i_k}\} = \{1,\zeta_k,,\zeta_k^2,\dots,\zeta_k^{k-1} \}$ and $\lambda_b = 1$ for all $b \not \in \{i_2,\dots,i_k\}$. We get $\age(\sigma,g) = k(k-1)/ (2k)$ and $\age(\sigma,g)^{-1} = k-1 - k(k-1)/ (2k) = (k-1)/ 2$.
Therefore we have 
\[
    \left(q_l(\xi_{(\sigma,g)}), q_r(\xi_{(\sigma,g)}) \right) = \left(\frac{k-1}{2} - \frac{k-1}{n}, \frac{k-1}{2} - \frac{k-1}{n} \right).
\]
\end{example}

\begin{example}
Consider a non--special $(\sigma,g) \in G$ with $\sigma = (i_1,\dots,i_k)$. Let 
$g \cdot x_p = \zeta_n^{a_p} x_p$
with $a_p = 0$ for $p \not\in \{i_1,\dots,i_k\}$. Assume also $\sum_p a_p < n$. The eigenvalues are $\lambda_{i_p} = \zeta_k^{p} \cdot \exp \left(2 \pi \sqrt{-1} \sum_{i=1}^k a_i / k\right)$ , $p = 1,\dots,k$ and $\lambda_b = 1$ for all $b \not \in \{i_1,\dots,i_k\}$. 
We get 
\begin{align}
\age(\sigma,g) & = \sum_i \frac{a_i}{N}  + \frac{k-1}{2} = \age(g) + \frac{k-1}{2}
\\
\age(\sigma,g)^{-1} &= k - \age(g) - \frac{k-1}{2} = \frac{k+1}{2} - \age(g).
\end{align}
Therefore we have 
\[
    \left(q_l(\xi_{(\sigma,g)}), q_r(\xi_{(\sigma,g)}) \right) = \left(\frac{k-1}{2} +\age(g), \frac{k+1}{2} - \age(g) \right).
\]
\end{example}

\section{Hochschild cohomology via Koszul complex}\label{section: HH}
In this section we present the technique of Shklyarov \cite{S20}. In particulr, we follow the notation and exposition of loc.cit.

Define the $N$-th $\ZZ$-graded {\em Clifford algebra} $\Cl_N$ as the quotient algebra of 
\[
\CC\langle\theta_1,\ldots,\theta_N,{\partial_{\theta_1}},\ldots,{\partial_{\theta_N}}\rangle
\]
modulo the ideal generated by 
\[
\theta_i\theta_j=-\theta_j\theta_i,\quad{\partial_{\theta_i}}{\partial_{\theta_j}}=-{\partial_{\theta_j}}{\partial_{\theta_i}},\quad{\partial_{\theta_i}}\theta_j=-\theta_j{\partial_{\theta_i}}+\delta_{ij},
\]
where $\theta_i$ is of degree $-1$ and ${\partial_{\theta_i}}$ is of degree $1$.
For $I\subseteq\{1,\ldots,N\}$ write
\begin{equation}
\partial_{\theta_{I}}:=\prod_{i\in I}\partial_{\theta_i}, 
\quad {\theta_{I}}:=\prod_{i\in I}{\theta_i},
\end{equation}
where in both cases  the multipliers are taken in increasing order of the indices. 
The subspaces $\CC[{\theta}]=\CC[\theta_1,\ldots,\theta_N]$ and $\CC[\partial_{\theta}]=\CC[\partial_{\theta_1},\ldots,\partial_{\theta_N}]$ of $\Cl_N$ have the left $\ZZ$-graded $\Cl_N$-module structures via the isomorphisms  
\[
\CC[{\theta}]\cong \Cl_N/\Cl_N\langle{\partial_{\theta_1}},\ldots,{\partial_{\theta_N}}\rangle, \quad \CC[\partial_{\theta}]\cong \Cl_n/\Cl_n\langle{{\theta_1}},\ldots,{{\theta_N}}\rangle. 
\]  

Write $\CC[x_1,\dots,x_N,y_1,\ldots, y_N]$ as $\CC[\bx,\by]$ and 
$\CC[x_1,\dots,x_N,y_1,\ldots, y_N, z_1,\dots, z_N]$ as $\CC[\bx,\by,\bz]$. 
For each $1 \le i \le N$, there is a map
\begin{eqnarray}\label{deltai}
\nabla^{\bx\to (\bx,\by)}_i: \CC[\bx]\to \CC[\bx,\by],\qquad \nabla_i(p):=\frac{l_i(p)-l_{i+1}(p)}{x_i-y_i}.
\end{eqnarray}
where $l_i(p):=p(y_1,\ldots,y_{i-1},x_i,\ldots,x_N)$, $l_1(p) = p(\bx)$ and $l_{N+1}(p) = p(\by)$. 
They are called the \textit{difference derivatives}, whose key property is the following: 
\begin{equation}\label{diffder}
\sum_{i=1}^N(x_i-y_i)\nabla_i(p)=p(\bx)-p(\by).
\end{equation}

The difference derivatives can be applied consecutively. In particular, we shall use 
$\nabla_i^{\by\to(\by,\bz)}\nabla_j^{\bx\to(\bx,\by)}(p)$, which is an element of $\CC[\bx,\by,\bz]$.  
For any $\CC$-algebra homomorphism $\psi: \CC[\bx] \to \CC[\bx]$, write $\nabla_i^{\bx\to(\bx,\psi(\bx))}(p) := \left.\nabla_i^{\bx\to(\bx,\by)}(p) \right|_{\by = \psi(\bx)} \in \CC[\bx]$.

\subsection{Clifford basis of $\A'_{f,G}$}
In order to make use of Shklyarov's technique we need to represent the basis elements of $\A'_{f,G}$ via some elements of $\Cl_N \otimes \CC[\bx]$ as above. 
To do this for every $u \in G$ we introduce the ordered set $J_u$ that will consist of the same entries as $I_u^c$ does, but be ordered differently. 

First consider some special cases.

\subsubsection{Length $k$ special cycle}
Consider special cycle $(\sigma,g) \in G_f$ with $\sigma = (i_1,\dots,i_k)$. Let also $\widetilde x_{i_a}$ be as in Section~\ref{section: fixed sets}.
Corresponding to the linearization assumed there, we also have
\begin{align}
  \widetilde \theta_{i_b} &= \frac{1}{k}\sum_{a=1}^k \zeta^{(1-b)(a-1)} \widetilde g_{i_a} \theta_{i_a}, 
  \quad 
  \p_{\widetilde \theta_{i_b}} = \sum_{a=1}^k \zeta^{(b-1)(a-1)} \frac{1}{\widetilde g_{i_a}} \p_{\theta_{i_a}}, \quad 1 \le b \le k,
  \\
  & \p_{\theta_{i_a}} = \frac{\widetilde g_{i_a}}{k} \sum_{b=1}^k \zeta^{(1-b)(a-1)} \p_{\widetilde \theta_{i_b}}, \quad 1 \le a \le k.
\end{align}

Set $J_{(\sigma,g)} := \{i_1,\dots,i_k\}$ to be the ordered set of indices (recall that according to our notation this means also that $i_1 < i_a$ for all $a \ge 2$) and 
\begin{align*}
  \widetilde\theta_{(\sigma,g)} & := \widetilde\theta_{J_{(\sigma,g)}} := \sum_{m=1}^k (-1)^{m-1} \widetilde g_{i_m} \theta_{i_1} \cdot \dots \widehat{m} \dots \cdot \theta_{i_k},
  \\
  \p_{\widetilde\theta_{(\sigma,g)}} & := \p_{\widetilde\theta_{J_{(\sigma,g)}}} := \sum_{m=1}^k (-1)^{m-1} \widetilde g_{i_m} \p_{\theta_{i_1}} \cdot \dots \widehat{m} \dots \cdot \p_{\theta_{i_k}},
\end{align*}
where $\widehat{m}$ stands for the skipped index $m$ multiple.

Note that with $J_{(\sigma,g)}$, ordered by the actions of $\sigma$ and the set ${\overline{J}_{(\sigma,g)} := \{j_1,\dots,j_k\}}$ being the same set ordered ascending we have
\begin{align}
  \p_{\tilde\theta_{(\sigma,g)}} &= \sum_{m=1}^k (-1)^{m-1} \widetilde g_{i_m} \p_{\theta_{i_1}} \cdot \dots \widehat{m} \dots \cdot \p_{\theta_{i_k}} = \left( \sum_{m=1}^k \widetilde g_{i_m}\theta_{i_m} \right)\cdot \prod_{m=1}^k \p_{i_m}
  \\
  & = \sgn(\sigma) \left( \sum_{m=1}^k \widetilde g_{i_m}\theta_{i_m} \right)\cdot \prod_{m=1}^k \p_{j_m} = \sgn(\sigma) \sum_{m=1}^k (-1)^{m-1} \widetilde g_{j_m} \p_{j_1} \dots \hat m \dots \p_{j_k}.
\end{align}

\subsubsection{Length $k$ non--special cycle} 
Assume non--special $(\sigma,g) \in G$ with $\sigma = (i_1,\dots,i_k)$. Set $J_{(\sigma,g)} := \{i_1,\dots,i_k\}$ and
\begin{align*}
  \widetilde\theta_{(\sigma,g)} & := \widetilde\theta_{J_{(\sigma,g)}} := \theta_{i_1} \cdot \dots \cdot \theta_{i_k},
  \\
  \p_{\widetilde\theta_{(\sigma,g)}} & := \p_{\widetilde\theta_{J_{(\sigma,g)}}} := \p_{\theta_{i_1}} \cdot  \dots \cdot \p_{\theta_{i_k}}.
\end{align*}

\subsubsection{Diagonal element} For $(\id,g) \in G$ define $J_{(\id,g)} := I_g^c$. For $I_g^c = \{i_1,\dots,i_k\}$ set
\begin{align*}
  \widetilde\theta_{(\id,g)} & := \widetilde\theta_{J_{(\id,g)}} := \theta_{i_1} \cdot \dots \cdot \theta_{i_k},
  \\
  \p_{\widetilde\theta_{(\id,g)}} & := \p_{\widetilde\theta_{J_{(\id,g)}}} := \p_{\theta_{i_1}} \cdot  \dots \cdot \p_{\theta_{i_k}},
\end{align*}

\subsubsection{Generic element}
Finally for an arbitrary $u \in G$ consider its decomposition into non--intersecting elements $u = \prod_{p=1}^r u_p$ with $u_p = (\sigma_p,g_p) \in G_f$ being either diagonal, non--special or special cycles. The elements $u_a$ and $u_b$ commute for different $a,b$ and we may assume that $\min J_{u_a} < \min J_{u_b}$ whenever $a < b$.
Denote $J_p := J_{u_p}$ and set
\[
    \p_{\widetilde\theta_u} :=  \p_{\widetilde\theta_{J_1}} \cdot \dots \cdot \p_{\widetilde\theta_{J_r}}.
\]

For every $u_p$ consider the linearization as above and also the corresponding eigenvalues $\lambda_\bullet$.
Let $\rmH_{{f,u}}(\bx)$ be the element of $\CC[\bx]\otimes \CC[\theta]$ given by
\begin{eqnarray}\label{del-3}
\rmH_{{f,u}}(\bx):=\sum_{{i,j\in I_{u}^c,\,\,  j<i}}\frac{1}{1-\lambda_j}\nabla^{\bx\to(\bx,\bx^u)}_j\nabla^{\bx\to(\bx,u(\bx))}_i(f)\,\widetilde\theta_j\,\widetilde\theta_i,
\end{eqnarray}
where $\bx^u$ is restriction to $\Fix(u)$.

We attribute to $\xi_u \in \A'_u$ the following Clifford element
\[
    \widetilde\xi_u := \exp\left(\rmH_{f,u}\right) \cdot \p_{\widetilde\theta_u} = \p_{\widetilde\theta_{J_1}} \cdot \dots \cdot \p_{\widetilde\theta_{J_r}} + \dots.
\]

In what follows we will call the degree of $\p_{\widetilde\theta_u}$ the \textit{length} of element $\xi_u$.
One notes immediately that the parity of $\xi_u$ defined in Section~\ref{section: vector space} is equal to length of $\xi_u$ modulo $2$.

\subsection{Product}
For each pair $(u,v)$ of elements in $G$, define the class $\lfloor \sigma_{u,v} \rfloor \in \Jac(f^{uv})$ as follows.

Let $\rmH_f(\bx,u(\bx),\bx)$ be the element of $\CC[\bx]\otimes\CC[\theta]^{\otimes2}$ defined as the restriction to the set 
$\{\by=u(\bx),\, \bz=\bx\}$ of the following element of  $\CC[\bx,\by,\bz]\otimes\CC[\theta]^{\otimes 2}$ 
\begin{equation}\label{del-2}
\rmH_{f}(\bx,\by,\bz):=\sum_{1\leq j\leq i\leq n} \nabla^{\by\to(\by,\bz)}_j\nabla^{\bx\to(\bx,\by)}_i(f)\,\theta_i\otimes \theta_j.
\end{equation}
We can consider the series expansion of $\exp(\rmH_{f})$ above with respect to the natural product on  $\CC[\bx]\otimes \CC[\theta]\otimes \CC[\theta]$. 
It follows immediately that for our particular choice of the polynomial $f$ we have
\begin{align}
  \rmH_f(\bx,\by,\bz) &= \sum_{i=1}^N \sum_{a+b+c = n-2} x_i^a y_i^b z_i^c \ \theta_i \otimes \theta_i,
  \\
  \rmH_f(\bx,(\sig,g)(\bx),\bx) &= \sum_{i=1}^N\sum_{a+b=n-2}(a+1)g_i^bx_i^ax_{\sig(i)}^b\tet_i\ten\tet_i.
\end{align}

Let 
$\LL$ be the $\CC[\bx]$-linear extension of the degree zero map ${\CC[{\theta}]^{\otimes2}\otimes \CC[\partial_{\theta}]^{\otimes2}\to\CC[\partial_{\theta}]}$ defined by
\begin{equation}\label{mu}
 p_1(\theta)\otimes p_2(\theta)\otimes q_1(\partial_\theta)\otimes q_2(\partial_\theta)\mapsto(-1)^{|q_1||p_2|}p_1(q_1)\cdot p_2(q_2)
\end{equation}
where $p_i(q_i)$ denotes the action of $p_i(\theta)$ on $q_i(\partial_\theta)$ via the $\Cl_N$-module structure on $\CC[\partial_\theta]$ defined above and $\cdot$ is the natural product in $\CC[\partial_{\theta}]$. 

Define $\sigma_{u,v}$ to be the coefficient of $\widetilde\xi_{uv}$ in the following expression
\begin{equation}\label{eq: sigma g,h}
    \LL\Big( \ \exp \left(\rmH_f(\bx,u(\bx),\bx) \right) \cdot \widetilde\xi_u \otimes \widetilde\xi_v \Big).
\end{equation}

It follows immediately from the definition that $\sigma_{u,v}$ is zero unless ${d_{u,v}:=\frac{1}{2}(d_u +d_v - d_{uv})}$ is a non--negative integer.

\begin{theorem}[\cite{S20}]\label{theorem: Shklyarov}
    For any $G \subseteq G_f$ there is a $\CC$-algebra isomorphism 
    \[
        \ccHH^*(\CC[\bx],f; \CC[\bx]\rtimes G) \cong \A'_{f,G}
    \] 
    preserving both $\ZZ/2\ZZ$--grading and $G$--grading. 
    
    This isomorphism gives
    \[
        \ccHH^*(\CC[\bx]\rtimes G,f) \cong \left( \A'_{f,G} \right)^G.
    \] 
\end{theorem}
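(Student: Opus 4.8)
The plan is to follow Shklyarov's strategy \cite{S20}: resolve, then localize over $G$, then transport the product. Since $f$ has an isolated critical point, I would first resolve $\CC[\bx]$ as a curved bimodule over $\CC[\bx,\by]$ by the Koszul complex built from the difference derivatives $\nabla_i(f)$ of Section~\ref{section: HH} --- the curved analogue of the classical Koszul resolution, whose failure to be exact is governed by $f(\bx)-f(\by)$ and hence by the curvature. Applying $\Hom$ into the coefficient bimodule and using the bimodule decomposition $\CC[\bx]\rtimes G=\bigoplus_{u\in G}\CC[\bx]\cdot u$, the Hochschild cochain mixed complex splits as a direct sum over $u\in G$, the $u$--summand computing the Hochschild cohomology of $(\CC[\bx],f)$ with coefficients in the $u$--twisted bimodule. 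This already produces the $G$--grading.

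Next I would compute each $u$--summand. After diagonalizing $u$ as in Example~\ref{example: special cycle} and decomposing $\Fix(u)$ via Lemma~\ref{lemma: fixed set}, the part of the Koszul differential transverse to $\Fix(u)$ (the indices in $I_u^c$) becomes, in suitable coordinates, multiplication by the nonzero scalars $1-\lambda_j$, so it is invertible; contracting it leaves a complex supported on $\Fix(u)$, carrying the top transverse Clifford monomial $\p_{\widetilde\theta_u}$, whose residual differential is the Koszul differential of $f^u$. Because $f^u$ is again of Fermat type, hence has an isolated singularity (or is identically zero), its Koszul homology is $\Jac(f^u)$ concentrated in a single homological degree; the $d_u$ contracted transverse directions shift this into $\ZZ/2\ZZ$--degree $d_u\bmod 2$. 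Summing over $u$ identifies the cohomology with $\A'_{f,G}=\bigoplus_u\Jac(f^u)\xi_u$ as a $\ZZ/2\ZZ$-- and $G$--graded space, with the cocycle representative of $\lfloor\phi\rfloor\xi_u$ given by $\phi\cdot\widetilde\xi_u=\phi\cdot\exp(\rmH_{f,u})\cdot\p_{\widetilde\theta_u}$, the exponential being precisely the correction produced by the contraction of the transverse differential.

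For the ring structure I would transport the cup product through these explicit representatives. The cup product on $\ccHH^*$ is the composition of a coproduct on the Koszul resolution with the multiplication of the coefficient bimodule (which twists $\psi(\bx)$ to $\psi(u(\bx))$ in the $u$--sector); written in the Clifford model it becomes the operation $\LL( \exp(\rmH_f(\bx,u(\bx),\bx)) \cdot \widetilde\xi_u\otimes\widetilde\xi_v )$ of Eq.~\eqref{eq: sigma g,h}, where $\exp(\rmH_f)$ encodes the curvature contributions and $\LL$ is the Clifford contraction of the $\theta$--variables against the $\p_\theta$--variables. One then verifies directly that this operation is associative, unital, and lands in the prescribed $G$-- and $\ZZ/2\ZZ$--degree, so the vector space isomorphism of the previous step is an isomorphism of $\CC$--algebras. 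I expect this verification --- that the HKR--type quasi--isomorphism $\widetilde\xi_\bullet$ intertwines the cup product with the explicit $\LL$--formula --- to be the main obstacle, as it demands careful sign bookkeeping in $\Cl_N$ and control of the several auxiliary difference--derivative variables $\bx,\by,\bz$.

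Finally, for the equivariant conclusion I would invoke the standard descent statement for smash products: since $G$ is finite and $\CC$ has characteristic zero, for any $\CC[\bx]\rtimes G$--bimodule $M$ there is a natural isomorphism $\ccHH^*(\CC[\bx]\rtimes G,M)\cong(\ccHH^*(\CC[\bx],M))^G$, proved by an averaging/transfer argument (the bar resolution of $\CC[\bx]\rtimes G$ relative to $\CC[\bx]$ is $G$--split), with $G$ acting on $\ccHH^*(\CC[\bx],M)$ through the combined conjugation on $\CC[\bx]$ and on $M$. Taking $M=\CC[\bx]\rtimes G$ and feeding in the algebra isomorphism above gives $\ccHH^*(\CC[\bx]\rtimes G,f)\cong(\A'_{f,G})^G$. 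Making that $G$--action explicit --- delicate because the chosen representatives $\widetilde\xi_u$ are not simply permuted by $G$ --- is exactly the content of Section~\ref{section: G--action} and is not needed for the theorem itself.
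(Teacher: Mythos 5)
Your proposal reconstructs exactly the route the paper takes, which is Shklyarov's own argument from \cite{S20}: Koszul/Clifford model split over $u\in G$, contraction of the transverse directions using the invertible eigenvalue factors (valid for any single element $u$, abelian group or not), transport of the cup product via $\exp(\rmH_f)$ and $\LL$, and finally the standard averaging isomorphism $\ccHH^*(\CC[\bx]\rtimes G,M)\cong(\ccHH^*(\CC[\bx],M))^G$ for finite $G$ in characteristic zero. You also correctly note, as the paper does, that the explicit form of the $G$--action on $\A'_{f,G}$ is not needed for the theorem itself and is handled separately (here via braided commutativity), so the proposal matches the paper's treatment.
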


One should note that Shklyarov only makes an explicit statement for the diagonal groups $G$. However, this result holds for the symmetry groups we consider as well (cf. Remark~4.14 and Remark~4.16 in \cite{S20}).

\subsection{$G$ action on $\A'_{f,G}$}\label{section: G on HH from def}
The space $\ccHH^*(\CC[\bx],f; \CC[\bx]\rtimes G)$ is naturally endowed with the action of $G$. By the theorem above this $G$--action is also defined for $\A'_{f,G}$. Namely, for any $u,v \in G$ there is a $\CC$--linear map $v^*: \A'_{f,u} \to \A'_{f,vuv^{-1}}$, s.t. the following \textit{braided} commutativity property holds
\begin{align}
    \lfloor\phi(\bx)\rfloor\xi_u \cup \lfloor\psi(\bx)\rfloor\xi_v &= (-1)^{|\xi_u||\xi_v|} \lfloor\psi(\bx)\rfloor\xi_v \cup (v^{-1})^*\left( \lfloor\phi(\bx)\rfloor\xi_u \right)
    \\
    &= (-1)^{|\xi_u||\xi_v|} \lfloor\psi(\bx)\rfloor\xi_v \cup \lfloor\phi(v^{-1}(\bx))\rfloor (v^{-1})^*\left(\xi_u \right).
    \label{eq: braided commutativity}
\end{align}

%
Assume $u,v \in G$ are simultaneously diagonalized. Let $\lambda_i^u$ and $\lambda_i^v$ be the eigenvalues of $u,v$ in the basis of eigenvectors of $u$.
We have
\begin{equation}\label{eq: G--action on diagonalizable}
    v: \quad \lfloor\phi(\bx))\rfloor \xi_u \mapsto \prod_i (\lambda_i^v)^{-1} \,\cdot  \lfloor\phi(v(\bx))\rfloor \xi_{vuv^{-1}},
\end{equation}
where the product is taken over $i$, s.t. $\lambda_i^{u} \neq 1$.
In particular, we have 
\begin{equation}\label{eq: G--action on id}
    g^* \left( \lfloor\phi(\bx)\rfloor \xi_\id \right) = \lfloor\phi(g(\bx))\rfloor \xi_\id
\end{equation}

In what follows in order to compute this $G$--action we use the braided--commutativity property above. Namely, we compute all the products of both sided of this equality using Eq.~\eqref{eq: HH cup} and show that these products fix the $G$--action uniquely.


\subsection{Brief review of Shklyarov's technique}
In order to prove Theorem~\ref{theorem: Shklyarov} Shklyarov makes the following steps.

On the first step (Section 4.1.3) Shklyarov establishes an explicit quasi-isomorphism between the Hochschild complex and mixed Koszul complex. Both complexes are assumed with the coefficients in any ring $M$, in particular, the ring $M = \CC[\bx]\rtimes G$ suits too. Important achievement here is also the formula, connecting the products in the cohomology rings of the two complexes. 

On the second step (Section 4.1.6) Shklyarov relates Koszul complex to another mixed complex. Namely, he considers the ``Clifford'' complex $M[\p_{\theta_1},\dots,\p_{\theta_N}]$ with the certain differentials, isomorphic to the Koszul complex. Here $M$ is still arbitrary, and the product in cohomology ring of Hochschild complex is related to the product on the Clifford complex via the certain formula. This is exactly the place from where the operators $\exp(\rmH_{f})$ and $\LL$ come. 

On the third step (Section 4.2) Shklyarov considers the certain coefficient ring $M = \CC[\bx]\rtimes G^d$ with an abelian group $G^d$. Here the Clifford complex is decomposed into the dicrect sum over $G^d$--elements, each corresponding to a subcomplex. For any $g \in G$ the $g$--th subcomplex is computed explicitly. Namely, one of the differentials of the mixed complex vanishes after the certain quasi-isomorphism. This is a place where it is important that $G^d$ is abelian, however works similarly if $g$ is diagonalized. The vanishing of this differential constitutes restriction to the (linear) fixed locus of $g$ for the fibre variables.  This is exactly the place from where the operator $\rmH_{f,g}$ comes. This step is still applicable for a non--abelian symmetry group with the restriction to the fixed locus being understood linearly.

Subtle point of the three steps above is the $G$--action. The quasi--isomorphism above are only proved to be $G$--equivariant when $G = G^d$ is abelian. This is the place where can not just compute the $G$--action by the definition and need to derive it from the braided commutativity property.

\section{Multiplication table}\label{section: multiplication table}
The aim of this section is to provide the technique to compute any product in $\A'_{f,G}$ explicitly. 
Note that $\A'_{f,G}\subset \A'_{f,G_f}$ is a subalgebra. Therefore in order to compute the $\cup$--product in $\A'_{f,G}$ we may use the elements $\xi_u$ with $u\in G_f$. In what follows we show that it's enough to know the $\cup$--product of $\xi_u$ with $u$ being either a special cycle or diagonal.

\subsection{Disjoint cycle decomposition}
Let $u  \in G$ be decomposed into the product of non--intersecting cycles $u = \prod_{a=0}^k (\sigma_a,g_a)$ with $\sigma_0 = \id$. In particular, we assume $I_{g_a}^c \subseteq I_{\sigma_a}^c$ for all $a \neq 0$.
We call it \textit{disjoint cycle decomposition of $(\sigma,g)$}.

For every $1 \le a \le k$ let $\phi_a := \det(g_a)$. Define $u_{a}^2 := (\id,\phi_a)$ with $\phi_a$ acting on the first coordinate of $I_{\sigma_a}^c$. 
Let $u_a^1$ be s.t. $(\sigma_a,g_a) = u_a^1 \cdot u_a^2$ and $u_0^1 := \id$, $u_0^2:= g_0$.

We have $(\sigma,g) = \prod_{a=0}^k u_a^1 \cdot u_a^2$, calling it \textit{special} cycle decomposition of $(\sigma,g)$.

\begin{remark}
    The choice of an element $u_a^1$ is not unique --- one could choose the coordinate number, on which it acts, to be different --- any one from $I_{\sigma_a}^c$. The choice we fix makes the $\cup$--product formulae simplier.
\end{remark}

\subsection{Cycle decomposition of a pair $(\alpha,g), (\beta,h) \in G$}\label{section: generalized cycle decomposition}
For any pair of elements $(\alpha,g), (\beta,h) \in G$ we build up a special cycle decomposition of both elements respecting geometry of both. 

Let $(\alpha, g) = \prod (\alpha_\bullet,g_\bullet)$ and $(\beta, h) = \prod (\beta_\bullet,h_\bullet)$ be the disjoint cycle decompositions. For every $k,l \neq 0$ decompose further every cycle $\alpha_k$, $\beta_l$ respectively into the product of non--disjoint cycles $\alpha_k = \alpha_{k,1}\cdot\dots\cdot\alpha_{k,i_k}$, $\beta_l = \beta_{l,1}\cdot\dots\cdot\beta_{l,j_l}$, so that for all possible indices $k,i, l,j$ we have:
\[
    I_{\alpha_{k,i}}^c \cap I_{\beta_{l,j}}^c \text{ is either of length } 0 \text{ or of length } 1 \text{ or coincides with } I_{\beta_{l,j}}^c
\]
In this way we get that either $\alpha_{k,i}\beta_{l,j} = \id$, $\alpha_{k,i}\beta_{l,j}$ is a cycle or $\alpha_{k,i}, \beta_{l,j}$ act on the different set of the variables and therefore commute. 

Respective to this decomposition of $\alpha$ and $\beta$ let $g = g_0 \cdot \prod g_{k,i}$ and $h = h_0 \cdot \prod h_{l,j}$ be decomposed into $G^d$--elements as in section above.
Now we have
\[
    (\alpha, g) = (\id,g_0) \cdot \prod_{k \ge 1, i} (\alpha_{k,i},g_{k,i}), \quad (\beta, h) = (\id,h_0) \cdot \prod_{l \ge 1, j} (\alpha_{l,j},h_{l,j}).
\]
We call this \textit{cycle decomposition of a pair} $(\alpha,g), (\beta,h)$. In what follows we also need to consider its \textit{special} version. Namely, as in the section above assume $\phi_\bullet := \det(g_\bullet)$, $\psi_\bullet := \det(h_\bullet)$ (where $\bullet$ stands for the double index notation above) acting on the first coordinate of $g_\bullet$, $h_\bullet$ respectively. As we had before, we get
\[
    (\alpha,g) = \prod u_\bullet^1 \cdot u_\bullet^2, \quad (\beta,h) = \prod v_\bullet^1 \cdot v_\bullet^2
\]
with $u_\bullet^2,v_\bullet^2 \in G_f^d$ and $u_\bullet^1,v_\bullet^1$ --- special.

\begin{example}
    Consider $\alpha = (1,2,3,4)(5,6)$, $g = t_1^at_2^bt_4^ct_6^d$ and $\beta = (1,2)(4,5)$, $h = \id$. 
    
    We get $\alpha_1 = (1,2,3,4)$, $\alpha_2 = (5,6)$ and $\beta_1 = (1,2)$, $\beta_2 = (4,5)$. Now decompose $\alpha_1 = (1,2)(2,3,4) =: \alpha_{1,1}\cdot\alpha_{1,2}$. This can be depicted as
     \begin{alignat}{5}
         &\alpha =&(&1,2)(2&,3, &4) (5&,6)&
         \\
         &\beta =&(&1,2)& &(4,5).&&
     \end{alignat}
    Set $g_1 := t_1^at_2^bt_4^c$, $g_2 = t_6^d$ and further $g_{1,1} := t_1^at_2^{b-e}$, $g_{1,2} := t_2^xt_4^c$, $g_{2,1} = g_2 = t_6^d$ for any $e \in \ZZ$. Respectively $(\alpha,g) = ((1,2),t_1^at_2^{b-e})((2,3,4),t_2^xt_4^c)((5,6),t_6^d)$. 
    
    We have $\phi_{1,1} = \zeta_n^{a+b-e}$, $\phi_{1,2} = \zeta_n^{-c-e}$, $\phi_{2,1} = \zeta_n^{-d}$ and therefore
    \begin{align*}
        & u^1_{1,1} = ((1,2),t_1^{e-b}t_2^{b-e}), u^2_{1,1} = (\id,t_1^{a+b-e}), \quad u^1_{1,2} = ((2,3,4),t_2^{-c}t_4^c), u^2_{1,2} = (\id,t_2^{e+c})
        \\
        & u^1_{2,1} = ((5,6),t_5^{-d}t_6^{d}), u^2_{2,1} = (\id,t_5^{d}).
    \end{align*}
    We have $\psi_{1,1} = 1$ and therefore
    \begin{align*}
        & v^1_{1,1} = ((1,2),\id), \ v^2_{1,1} = (\id,\id), \quad v^1_{2,1} = ((4,5),\id), \ v^2_{2,1} = (\id,\id).
    \end{align*}

\end{example}

\subsection{The strategy}\label{section: the strategy}

Given two $(\alpha,g),(\beta,h) \in G$ in order to compute $\xi_{(\alpha,g)} \cup \xi_{(\beta,h)}$ we follow the steps below.

\subsubsection{Basis vectors decomposition}
Let $(\alpha,g) = \prod_{a=0}^{p_1} (\alpha_a,g_a)$ and $(\beta,h) = \prod_{b=0}^{p_2} (\beta_b,h_b)$ be the cycle decompositions as in Section~\ref{section: generalized cycle decomposition} with the special cycle decompostions $(\alpha,g) = \prod u_a^1 \cdot u_a^2$ and $(\beta,h) = \prod v_a^1 \cdot v_a^2$.

\begin{itemize}
    
    \item By Proposition~\ref{prop: product of special with diagonal} we will compute 
    \[
        \xi_{u_a^1} \cup \xi_{u_a^2} = \xi_{(\alpha_a,g_a)} \text{ and } \xi_{v_b^1} \cup \xi_{v_b^2} = \xi_{(\beta_b,h_b)},
    \]
    for all values of the indices $a$ and $b$.

    \item
    By Proposition~\ref{prop: product of non-intersecting elements} and Eq~\eqref{eq: G--action on id} there is $c_1 \in \CC^*$, s.t. 
    \[
        \left( \xi_{u_0^1}\cup \xi_{u_0^2} \right) \cup \dots \cup \left( \xi_{u_{p}^1}\cup \xi_{u_{p}^2} \right) = \pm \left( \bigcup_{a=0}^{p_1} \xi_{ u_{a}^1} \right) \cup \left( \bigcup_{a=0}^{p_1} \xi_{ u_{a}^2} \right) = c_1 \xi_{(\alpha,g)},
    \]
    for special $G$--elements $u_\bullet^2$ and some $ u_\bullet^1 \in G^d$ with non--intersecting domain. Similarly there is $c_2 \in \CC^*$, s.t.
    \[
        \left( \xi_{v_0^1}\cup \xi_{v_0^2} \right) \cup \dots \cup \left( \xi_{v_{q}^1}\cup \xi_{v_{q}^2} \right) = c_2 \xi_{(\beta,h)}.
    \]
    for special $G$--elements $ v_\bullet^2$ and some $v_\bullet^1 \in G^d$ with non--intersecting domain.
\end{itemize}

\subsubsection{Multiplication}
    Now the product $\xi_{(\alpha,g)} \cup \xi_{(\beta,h)}$ can be computed with the help of decomposition above. In particular

    \begin{itemize}
     \item By braided commutativity and Eq.\eqref{eq: G--action on diagonalizable} there exists $c_3 \in \CC^*$, s.t. 
        \begin{align*}
            \xi_{(g,\alpha)} \cup \xi_{(h,\beta)} &= (c_1c_2)^{-1} 
            \left( \xi_{u_0^1}\cup \xi_{u_0^2} \right) \cup \dots 
            \cup \left( \xi_{v_{q}^1}\cup \xi_{v_{q}^2} \right)
            \\
            &= c_3 (c_1c_2)^{-1} 
            \left( \bigcup \xi_{u_a^1}\cup \xi_{v_b^1} \right) \cup \left( \bigcup \xi_{u_a^2}\cup \xi_{v_b^2} \right)
        \end{align*}
     \item The product of $\xi_{ u_\bullet^2}$,$\xi_{ v_\bullet^2}$ -- elements is computed by Proposition~\ref{prop: diagonal elements product}. It gives us an element $[\phi_1]\xi_{(\id,A)}$ with $(\id,A) \in G^d$.
     \item The product of $\xi_{ u_\bullet^1}$,$\xi_{ v_\bullet^1}$ -- elements is computed by Proposition~\ref{prop: product of non-intersecting elements}, Proposition~\ref{prop: cup-product of generalized cycles with inverse}, Proposition~\ref{prop: special cycles intersecting by one index}. It gives us an element $[\phi_2]\xi_{(\alpha\beta,B)}$ with special $(\alpha\beta,B) \in G$.
     \item Finally $[\phi_1]\xi_{(\id,A)}\cup[\phi_2]\xi_{(\alpha\beta,B)}$ is computed by Proposition~\ref{prop: product of special with diagonal}.
    \end{itemize}

\subsection{Diagonal subgroup $G^d$} 
The $g$--sectors $\A'_{f,g}$ for all $g \in G^d$ form a subalgebra of $\A'_{f,G}$. 
The product of $\A'_{f,G^d}$ was computed explicitly in \cite{BT2}. Because the formula for the products $\xi_g \cup \xi_h$, $g,h\in G^d$ is literally the same in our context, we may use the formulae of \cite{BT2} for the structure constants of $\A'_{f,G^d} \subset \A'_{f,G}$. In particular, we make use of the following formulae.

\begin{proposition}[Proposition~9, Lemma~11 and~12 
in \cite{BT2}]\label{prop: diagonal elements product}~
\begin{itemize}
 \item For any $g,h \in G^d \backslash \{\id\}$, s.t. $I_g \cup I_h \cup I_{gh} \neq I_\id$ we have $\xi_g \cup \xi_h = 0$. 
 \item For each $g=(g_1,\dots,g_N)\in G^d\backslash\{\id\}$ 
, we have 
    \begin{equation}
    \xi_{g}\cup\xi_{g^{-1}}=(-1)^{\frac{d_g(d_g-1)}{2}}\cdot \prod_{i \in I_g^c} \frac{1}{g_i - 1} \lfloor H_{g,g^{-1}}\rfloor_\id \xi_\id.
    \end{equation}
    where 
    \begin{equation}
    H_{g,g^{-1}}:=\widetilde{m}_{g,g^{-1}}\det\left(\frac{\partial^2 f}{\partial x_i\partial x_j}\right)_{i,j\in I_g^c}
    \end{equation}
    and $\widetilde{m}_{g,g^{-1}}$ is a constant uniquely determined by the following equation in $\Jac(f)$
    \begin{equation}\label{H and hessians}
    \frac{1}{\mu_{f^g}}\lfloor\hess(f^g)H_{g,g^{-1}}\rfloor=\frac{1}{\mu_{f}}\lfloor\hess(f)\rfloor.
    \end{equation}

\end{itemize}
\end{proposition}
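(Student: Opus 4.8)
The plan is to compute the structure constants $\sigma_{g,h}$ of Eq.~\eqref{eq: HH cup} directly from their definition in Eq.~\eqref{eq: sigma g,h}, exploiting that for $g\in G^d$ the Clifford element $\widetilde\xi_g$ is very simple. First I would note that for our Fermat $f$ one has $\nabla_i^{\bx\to(\bx,g(\bx))}(f)=x_i^{n-1}\sum_{b=0}^{n-1}g_i^b=0$ whenever $i\in I_g^c$ (the $g_i$ being a nontrivial $n$-th root of unity), so the polynomial $\rmH_{f,g}$ of Eq.~\eqref{del-3} vanishes identically and $\widetilde\xi_g=\p_{\widetilde\theta_g}=\p_{\theta_{i_1}}\cdots\p_{\theta_{i_k}}$ for $I_g^c=\{i_1<\dots<i_k\}$. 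Next, from the displayed formula for $\rmH_f(\bx,(\sig,g)(\bx),\bx)$ specialized to $\sig=\id$, namely $\rmH_f(\bx,(\id,g)(\bx),\bx)=\sum_i c_i(g)\,x_i^{n-2}\,\theta_i\otimes\theta_i$ with $c_i(g):=\sum_{a+b=n-2}(a+1)g_i^b$, and from $(\theta_i\otimes\theta_i)^2=0$, the exponential truncates: $\exp\big(\rmH_f(\bx,g(\bx),\bx)\big)=\sum_{S}(-1)^{|S|(|S|-1)/2}\big(\prod_{i\in S}c_i(g)x_i^{n-2}\big)\,\theta_S\otimes\theta_S$, the sum over subsets $S\subseteq\{1,\dots,N\}$.

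For the first bullet I would apply $\LL$ to $\exp(\rmH_f(\bx,g(\bx),\bx))\cdot\widetilde\xi_g\otimes\widetilde\xi_h$ term by term. By the $\Cl_N$-module structure on $\CC[\p_\theta]$, the image of the $S$-summand is proportional to $\theta_S(\p_{\theta_{I_g^c}})\cdot\theta_S(\p_{\theta_{I_h^c}})$, where $\theta_S(\p_{\theta_T})$ vanishes unless $S\subseteq T$ and then equals $\pm\p_{\theta_{T\setminus S}}$, while a product $\p_{\theta_A}\p_{\theta_B}$ vanishes unless $A\cap B=\emptyset$. Hence the $S$-summand can contribute to the coefficient of $\widetilde\xi_{gh}=\p_{\theta_{I_{gh}^c}}$ only if $S\subseteq I_g^c\cap I_h^c$, the sets $I_g^c\setminus S$ and $I_h^c\setminus S$ are disjoint, and their union equals $I_{gh}^c$; the first two conditions force $S=I_g^c\cap I_h^c$, and then their union is the symmetric difference $I_g^c\,\triangle\,I_h^c$. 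Thus $\sigma_{g,h}\neq0$ forces $I_{gh}^c=I_g^c\triangle I_h^c$. But $I_g\cup I_h\cup I_{gh}\neq I_\id$ means exactly that some index $m$ satisfies $g_m\neq1$, $h_m\neq1$ and $g_mh_m\neq1$; such an $m$ lies in $I_{gh}^c$ yet not in $I_g^c\triangle I_h^c$, so $I_{gh}^c\neq I_g^c\triangle I_h^c$, whence $\sigma_{g,h}=0$ and $\xi_g\cup\xi_h=0$ by Eq.~\eqref{eq: HH cup}.

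For the second bullet take $h=g^{-1}$, so $I_h^c=I_g^c$ and $\widetilde\xi_{gh}=\widetilde\xi_\id=1$; by the support analysis above only $S=I_g^c$ contributes. Assembling the signs — $(-1)^{d_g(d_g-1)/2}$ from reordering $\prod_{i\in S}(\theta_i\otimes\theta_i)$ into $\theta_S\otimes\theta_S$, the factor $(-1)^{d_g^2}=(-1)^{d_g}$ from the $(-1)^{|q_1||p_2|}$ in \eqref{mu}, and the two Clifford contractions $\theta_{I_g^c}(\p_{\theta_{I_g^c}})=(-1)^{d_g(d_g-1)/2}$ (whose product is $1$) — gives $\sigma_{g,g^{-1}}=(-1)^{d_g(d_g-1)/2}(-1)^{d_g}\prod_{i\in I_g^c}c_i(g)x_i^{n-2}$. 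A short geometric-series computation using $g_i^n=1\neq g_i$ yields $c_i(g)=-n/(g_i-1)$, hence $\prod_{i\in I_g^c}c_i(g)=(-1)^{d_g}n^{d_g}\prod_{i\in I_g^c}(g_i-1)^{-1}$ and $\sigma_{g,g^{-1}}=(-1)^{d_g(d_g-1)/2}\,n^{d_g}\prod_{i\in I_g^c}\frac{x_i^{n-2}}{g_i-1}$. Finally I would identify this with the claimed expression: for the Fermat $f$ one has $\det\big(\frac{\partial^2 f}{\partial x_i\partial x_j}\big)_{i,j\in I_g^c}=(n(n-1))^{d_g}\prod_{i\in I_g^c}x_i^{n-2}$, and from $\mu_f=(n-1)^N$, $\mu_{f^g}=(n-1)^{N_g}$, $\hess(f)=(n(n-1))^N\prod_i x_i^{n-2}$, $\hess(f^g)=(n(n-1))^{N_g}\prod_{i\in I_g}x_i^{n-2}$ the constant $\widetilde{m}_{g,g^{-1}}$ fixed by Eq.~\eqref{H and hessians} equals $(n-1)^{-d_g}$, so that $H_{g,g^{-1}}=n^{d_g}\prod_{i\in I_g^c}x_i^{n-2}$ and therefore $\sigma_{g,g^{-1}}=(-1)^{d_g(d_g-1)/2}\big(\prod_{i\in I_g^c}\frac{1}{g_i-1}\big)H_{g,g^{-1}}$; combined with $\xi_g\cup\xi_{g^{-1}}=\lfloor\sigma_{g,g^{-1}}\rfloor_\id\xi_\id$ this is the assertion.

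The main obstacle is the sign bookkeeping: one must carefully track the Koszul signs incurred when rewriting $\prod_{i\in S}(\theta_i\otimes\theta_i)$ in the standard monomial order, the sign $(-1)^{|q_1||p_2|}$ built into the definition \eqref{mu} of $\LL$, and the signs produced by the Clifford contractions $\theta_S(\p_{\theta_T})$ and by reordering products $\p_{\theta_A}\p_{\theta_B}$, and verify that they conspire to produce exactly the factor $(-1)^{d_g(d_g-1)/2}$ appearing in the statement. A secondary, much easier point is matching Shklyarov's normalization $n^{d_g}/(n(n-1))^{d_g}$ with the constant $\widetilde{m}_{g,g^{-1}}$ defined abstractly by Eq.~\eqref{H and hessians}; this reduces to comparing the socle generators $\lfloor\hess(f)\rfloor$ of $\Jac(f)$ and $\lfloor\hess(f^g)\rfloor$ of $\Jac(f^g)$ and presents no difficulty.
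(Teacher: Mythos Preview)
Your argument is correct. The paper does not give its own proof of this statement: it is quoted verbatim from \cite{BT2} (Proposition~9 and Lemmas~11--12 there), and the paper only follows it with an illustrative example for $g=t_k^p$. What you have written is essentially the specialization to the Fermat case of the computation carried out in \cite{BT2} for arbitrary invertible polynomials: there too one observes that $\widetilde\xi_g=\p_{\theta_{I_g^c}}$ for diagonal $g$, expands $\exp(\rmH_f(\bx,g(\bx),\bx))$ as a sum over subsets $S$, and reads off the coefficient of $\widetilde\xi_{gh}$ via the Clifford contraction. The only difference is that in the Fermat case the coefficients $c_i(g)$ and the Hessian blocks factor completely, which lets you make the identification $H_{g,g^{-1}}=n^{d_g}\prod_{i\in I_g^c}x_i^{n-2}$ and the constant $\widetilde m_{g,g^{-1}}=(n-1)^{-d_g}$ explicit, whereas \cite{BT2} leaves the answer in the abstract Hessian form of Eq.~\eqref{H and hessians}.

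One minor remark on presentation: your justification for $\rmH_{f,g}=0$ (that the inner difference derivative $\nabla_i^{\bx\to(\bx,g(\bx))}(f)$ vanishes for $i\in I_g^c$) is correct, but for a separable polynomial like $f=\sum x_i^n$ it is even simpler to note that $\nabla_j\nabla_i(f)=0$ for $j\neq i$ before any substitution, and the sum in Eq.~\eqref{del-3} is over $j<i$. Either way the conclusion stands. Your sign bookkeeping in the second bullet is the genuine content and matches the $(-1)^{d_g(d_g-1)/2}$ in the statement; the self-flagged ``main obstacle'' is handled correctly.
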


This proposition does not give a formula for the cup product when $g,h$ are arbitrary, however we do not need it in this text referring interested reader to \cite{BT2,BTW16}.

\begin{example}
Let's illustrate the proposition above on the simple example $g = t_k^p$ for some fixed $1 \le k \le N$ and $1 \le p < n$. We have $f^g = \sum_{i\neq k}x_i^n$ and Eq.\eqref{H and hessians} reads
\begin{align*}
    &\frac{\widetilde m}{(n-1)^{N-1}} \Bigg\lfloor n(n-1)x_k^{n-2} \cdot \prod_{\substack{1 \le i \le N \\ i \neq k}} n(n-1) x_i^{n-2} \Bigg\rfloor
    = \frac{1}{(n-1)^N} \left\lfloor \prod_{1 \le i \le N } n(n-1) x_i^{n-2} \right\rfloor
\end{align*}
giving
\[
     H_{g,g^{-1}} = n \cdot x_k^{n-2}.
\]
We have $d_g = 1$, $\age(g) = p/n$. This gives
\begin{align*}
    \xi_{t_k^p} \cup \xi_{t_k^{-p}} &= \frac{\exp(-\pi \sqrt{-1} \cdot \frac{p}{n})}{\exp(\pi \sqrt{-1} \cdot \frac{p}{n})-\exp(-\pi \sqrt{-1} \cdot \frac{p}{n})} \cdot n \cdot \lfloor x_k^{n-2} \rfloor \xi_\id
    \\
    &= \frac{n}{\zeta_k^p- 1} \cdot \lfloor x_k^{n-2} \rfloor \xi_\id.
\end{align*}
\end{example}




\subsection{``Symmetric'' subgroup $G^s$} In this section consider the cup--product of elements $\alpha,\beta \in G^s$. Note that such elements are all special.
\begin{proposition}\label{prop: Clifford multiplications}
    Let $\alpha,\beta \in G^s$ satisfy one of the following conditions
    \begin{description}
     \item[(a)] $\alpha,\beta$ are arbitrary non--intersecting,
     \item[(b)] $\alpha,\beta$ are both cycles and $|I_\alpha^c \cap I_\beta^c| = 1$,
     \item[(c)] $\alpha = \sigma'\sigma''$ being a product of two non--intersecting cycles and $\beta$ is a cycle, s.t. $\alpha\beta$ is a cycle.
    \end{description}
    Then there are numbers $\eps_{\alpha,\beta} \in \{+1,-1\}$, s.t.  $\p_{\tilde \theta_g} \cdot \p_{\tilde \theta_h} = \eps_{\alpha,\beta} \p_{\tilde \theta_{\alpha\beta}}$ holds in $\mathrm{Cl}_N[\p_\theta]$.
\end{proposition}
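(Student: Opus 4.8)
The plan is to derive all three cases from one computation in $\Cl_N$ together with the super--commutativity of disjoint Clifford factors. First I would record some preliminaries. Since $\alpha,\beta\in G^s$, every cycle occurring in their disjoint cycle decompositions is special with all $\widetilde g_\bullet=1$, so for a single cycle $\sigma=(i_1,\ldots,i_k)$ the formula of Section~\ref{section: HH} says that $\p_{\widetilde\theta_{(\sigma,\id)}}$ is the alternating sum $\sum_{m=1}^{k}(-1)^{m-1}\p_{\theta_{i_1}}\cdots\p_{\theta_{i_k}}$ in which the $m$-th factor is deleted; this is a homogeneous element of degree $k-1$ of the exterior algebra $\CC[\p_\theta]$, and a cyclic relabelling of $(i_1,\ldots,i_k)$ multiplies it by $(-1)^{k-1}$. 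For $u$ a product of non--intersecting cycles, $\p_{\widetilde\theta_u}$ is by definition the product of the corresponding cycle elements ordered by $\min J$; since distinct disjoint cycles involve disjoint $\p_\theta$'s, these factors super--commute, so reordering them costs only the Koszul sign coming from the degrees $(|\sigma|-1)$. I would also use the elementary fact that if two cycles $\sigma,\tau$ satisfy $|I_\sigma^c\cap I_\tau^c|=1$, then $\sigma\tau$ is again a cycle, with $I_{\sigma\tau}^c=I_\sigma^c\cup I_\tau^c$.

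\textbf{Case (a).} As $\alpha,\beta$ are non--intersecting, the disjoint cycle decomposition of $\alpha\beta$ is the union of those of $\alpha$ and $\beta$, all cycles in sight being pairwise disjoint. Hence $\p_{\widetilde\theta_\alpha}\p_{\widetilde\theta_\beta}$ and $\p_{\widetilde\theta_{\alpha\beta}}$ differ only by the permutation that sorts the concatenated list of cycle factors by $\min J$; by the preliminaries this contributes a sign $\eps_{\alpha,\beta}=\pm1$.

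\textbf{Case (b).} This is the computational core. After a cyclic relabelling of each cycle (which by the preliminaries changes both sides by explicit signs) I may take $\alpha=(a_1,\ldots,a_{r-1},s)$ and $\beta=(s,b_2,\ldots,b_t)$ with $s$ the unique common index and $r,t$ the lengths, so $\alpha\beta=(s,b_2,\ldots,b_t,a_1,\ldots,a_{r-1})$. Expanding $\p_{\widetilde\theta_\alpha}$ and $\p_{\widetilde\theta_\beta}$ into their ``delete one factor'' monomials and multiplying, the relation $\p_{\theta_s}^2=0$ kills every product of two monomials each containing $\p_{\theta_s}$, leaving three families: the $\p_{\theta_s}$--monomials of $\p_{\widetilde\theta_\alpha}$ times the $s$--free monomial of $\p_{\widetilde\theta_\beta}$; the $s$--free monomial of $\p_{\widetilde\theta_\alpha}$ times the $\p_{\theta_s}$--monomials of $\p_{\widetilde\theta_\beta}$; and the product of the two $s$--free monomials. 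Reordering each surviving monomial into the standard order of $\p_{\widetilde\theta_{\alpha\beta}}$, one checks that all three families acquire the \emph{same} sign $(-1)^{(r-1)t}$ and together exhaust, each exactly once, the ``delete one factor'' monomials of $\p_{\widetilde\theta_{\alpha\beta}}$. Thus $\p_{\widetilde\theta_\alpha}\p_{\widetilde\theta_\beta}=(-1)^{(r-1)t}\p_{\widetilde\theta_{\alpha\beta}}$ in these coordinates, and restoring the relabelling signs gives the claim.

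\textbf{Case (c).} Write $\alpha=\sigma'\sigma''$ with $\sigma',\sigma''$ non--intersecting cycles. Since $\alpha\beta$ is a single cycle, $\beta$ must meet each of $\sigma',\sigma''$ in exactly one index and be disjoint from them otherwise; then $\sigma''\beta$ is a cycle with $|I_{\sigma''}^c\cap I_\beta^c|=1$, and $\sigma'$ meets $\sigma''\beta$ in exactly one index with $\sigma'(\sigma''\beta)=\alpha\beta$. Using the preliminaries I would rewrite $\p_{\widetilde\theta_\alpha}=\pm\,\p_{\widetilde\theta_{\sigma'}}\p_{\widetilde\theta_{\sigma''}}$, apply Case~(b) to the last two factors to get $\p_{\widetilde\theta_{\sigma''}}\p_{\widetilde\theta_\beta}=\pm\,\p_{\widetilde\theta_{\sigma''\beta}}$, and apply Case~(b) once more to get $\p_{\widetilde\theta_{\sigma'}}\p_{\widetilde\theta_{\sigma''\beta}}=\pm\,\p_{\widetilde\theta_{\alpha\beta}}$; collecting signs finishes the proof. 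The step I expect to be the main obstacle is the sign check in Case~(b): one must confirm that the three families of surviving monomials really carry a common sign, so that they assemble into $\pm\,\p_{\widetilde\theta_{\alpha\beta}}$ rather than a genuine linear combination of distinct monomials. Granting Case~(b), Cases~(a) and~(c) are purely formal consequences of it and of the super--commutativity of disjoint factors.
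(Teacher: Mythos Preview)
Your overall strategy is essentially the paper's: Case~(a) is reduced to the Koszul sign from super--commuting disjoint factors; Case~(b) is a direct expansion of the alternating sums in the exterior algebra (you normalize by cyclic relabelling first, while the paper keeps the generic positions $a,b$ and tracks the sign that way --- both come to the same computation); Case~(c) is bootstrapped from (a) and (b). The sign check you flag in Case~(b) is exactly the computational content of the paper's proof, and your three--family description of the surviving monomials matches what the paper does.

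There is, however, a genuine gap in your Case~(c). Your opening sentence, ``Since $\alpha\beta$ is a single cycle, $\beta$ must meet each of $\sigma',\sigma''$ in exactly one index and be disjoint from them otherwise,'' is false. Take $\sigma'=(1,2,3)$, $\sigma''=(4,5)$, and $\beta=(1,2,3,4)$; then $\alpha\beta=(1,3,5,4,2)$ is a single $5$--cycle, yet $\beta$ shares all three indices $1,2,3$ with $\sigma'$. So you cannot directly feed $\sigma''$ and $\beta$ into Case~(b), since $|I^c_{\sigma''}\cap I^c_\beta|$ need not equal $1$. The paper handles this differently: it first reduces to the special case where $\beta$ is a transposition $(i_a,j_b)$ with $i_a\in I^c_{\sigma'}$ and $j_b\in I^c_{\sigma''}$, arguing that a general cycle $\beta$ decomposes into such transpositions and that Case~(b) already expresses $\p_{\widetilde\theta_\beta}$ as $\pm$ a product of transposition factors. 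In that reduced situation your two--step application of Case~(b) is exactly what the paper does. To fix your argument, replace the incorrect structural claim by this reduction to the transposition case.
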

\begin{proof}
    ~\\
    {\bf Case (a).}
    This follows immediately from our definition of the elements $\p_{\tilde \theta}$. Namely, the cycles of $\alpha\beta$ are either cycles of $\alpha$ or cycles of $\beta$. Thus, the elements $\p_{\widetilde\theta_\alpha} \cdot \p_{\widetilde\theta_\beta}$ and $\p_{\widetilde\theta_{\alpha\beta}}$ as products over cycles has the same multipliers and, hence, coincide up to sign.
    
    In particular, for  $\alpha$ and $\beta$ being nonintersecting cycles $(i_1,\dots,i_k)$ and $(j_1,\dots,j_l)$ respectively we get 
    \[
        \p_{\tilde \theta_\alpha} \cdot \p_{\tilde \theta_\beta} = \begin{cases} \p_{\tilde \theta_{\alpha\beta}} & \quad \text{ if } i_1 < j_1, \\  (-1)^{(k-1)(l-1)} \p_{\tilde \theta_{\alpha\beta}}& \quad \text{ if } i_1 > j_1.\end{cases}.
    \]
    ~\\
    {\bf Case (b).}
    Without loss of generality assume $\alpha = (i_1,\dots,i_k)$, $\beta = (j_1,\dots,j_l)$ and $i_a = j_b$. 
    We show that
  \[
    \eps_{\alpha,\beta} = 
    \begin{cases} (-1)^{(k-1+a+b)(l-1)} \xi_{\alpha\beta} & \quad \text{ if } i_1 \le j_1, 
    \\  
    (-1)^{(a+b)(k-1)} \xi_{\alpha\beta} & \quad \text{ if } i_1 > j_1.\end{cases}
  \]
  Assume that $i_1 \le j_1$. Basic computations in $\Cl_N$ give
  \begin{align*}
    &\p_{\widetilde \theta_\alpha} \cdot \p_{\widetilde \theta_\beta} =  \sum_{m=1}^k (-1)^{m-1} \p_{\theta_{i_1}} \cdot \dots \widehat{m} \dots \cdot \p_{\theta_{i_k}} \cdot \sum_{m'=1}^l (-1)^{m'-1} \p_{\theta_{j_1}} \cdot \dots \widehat{m'} \dots \cdot \p_{\theta_{j_l}}
    \\
    &= (-1)^{(k-a)(l-1)} (-1)^{a-1}\sum_{m'=1}^l (-1)^{m'-1}  \p_{\theta_{i_1}} \cdot \dots (\p_{\theta_{j_1}} \cdot \dots \widehat{m'} \dots \cdot \p_{\theta_{j_l}} ) \dots \cdot \p_{\theta_{i_k}}
    \\
    & + (-1)^{(k-a)(l-1)}  (-1)^{b-1} \sum_{m=1}^{a} (-1)^{m-1} \p_{\theta_{i_1}} \cdot \dots \widehat{m} \dots (\p_{\theta_{j_1}} \cdot \dots \widehat{b} \dots \cdot \p_{\theta_{j_l}}) \dots \cdot \p_{\theta_{i_k}}
    \\
    & + (-1)^{(k-a-1)(l-1)} (-1)^{b-1} \sum_{m=a+1}^k (-1)^{m-1} \p_{\theta_{i_1}} \cdot  \dots (\p_{\theta_{j_1}} \cdot \dots \widehat{b} \dots \cdot \p_{\theta_{j_l}}) \dots \widehat{m} \dots \cdot \p_{\theta_{i_k}}
    \\
    &= 
    (-1)^{(k-a)(l-1)}  (-1)^{b-1} \sum_{m=1}^{a} (-1)^{m-1} \p_{\theta_{i_1}} \cdot \dots \widehat{m} \dots (\p_{\theta_{j_1}} \cdot \dots \widehat{b} \dots \cdot \p_{\theta_{j_l}}) \dots \cdot \p_{\theta_{i_k}}
    \\
    & + 
    (-1)^{(k-a)(l-1)} \sum_{m'=1}^l (-1)^{(m'+a-1)-1}  \p_{\theta_{i_1}} \cdot \dots (\p_{\theta_{j_1}} \cdot \dots \widehat{m'} \dots \cdot \p_{\theta_{j_l}} ) \dots \cdot \p_{\theta_{i_k}}
    \\
    & + (-1)^{(k-a)(l-1)} (-1)^{b-1} \sum_{m=a+1}^k (-1)^{(m+l-1)-1} \p_{\theta_{i_1}} \cdot  \dots (\p_{\theta_{j_1}} \cdot \dots \widehat{b} \dots \cdot \p_{\theta_{j_l}}) \dots \widehat{m} \dots \cdot \p_{\theta_{i_k}}.
  \end{align*}
  Finally we should reorder the terms in the brackets so that $\p_{\theta_{j_b}}$ appears on the first place. This introduces additional factor of $(-1)^{(b-1)(l-b)}$ to all three summands above and additional factor of $(-1)^{b-1}$ to the second summand only (because it involves a summation over the affected elements). 
  This completes the proof for $i_1 < j_1$. 
  The case $i_1 > j_1$ is treated completely similarly giving $\p_{\tilde \theta_\alpha} \cdot \p_{\tilde \theta_\beta} = (-1)^{(b-1)(k-1)} (-1)^{(a-1)(k-a-1)} \p_{\tilde \theta_{\alpha\beta}}$.
 ~\\
 {\bf Case (c).}
 Assume $\sigma' = (i_1,\dots,i_k)$, $\sigma'' = (j_1,\dots,j_l)$. It's enough to show the claim for $\beta = (i_a,j_b)$ because any more general $\beta$ can be decomposed into the product of permutations. We have
 \begin{align*}
    \p_{\tilde\theta_\alpha} \cdot \p_{\tilde\theta_{(i_a,j_b)}} 
    &= \left( \eps_{\sigma',\sigma''}\p_{\tilde\theta_{\sigma'}} \cdot \p_{\tilde\theta_{\sigma''}} \right) \cdot \p_{\tilde\theta_{(i_a,j_b)}} & \quad \textit{by case (a) above}
    \\
    &= \eps_{\sigma',\sigma''}\p_{\tilde\theta_{\sigma'}} \cdot \eps_{\sigma'',(i_a,j_b)} \p_{\tilde\theta_{\sigma''(i_a,j_b)}} & \quad \textit{by case (b) above}
    \\
    &= \eps_{\sigma',\sigma''} \eps_{\sigma'',(i_a,j_b)} \eps_{\sigma',\sigma'(i_a,j_b)} \p_{\tilde\theta_{\sigma'\sigma''(i_a,j_b)}}
 \end{align*}
 where in the last equality we have used case (b) above again because the pair $\alpha' = \sigma'$, $\beta' = \sigma''(i_a,j_b)$ satisfies its conditions.
\end{proof}

\begin{proposition}\label{prop: simple cup-products}
    Let $\alpha,\beta$ be as in Proposition~\ref{prop: Clifford multiplications} above. We have $\xi_\alpha \cup \xi_\beta = \eps_{\alpha,\beta} \xi_{\alpha\beta}$.
\end{proposition}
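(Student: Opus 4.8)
The plan is to unwind the definition of $\sigma_{\alpha,\beta}$ and show it equals the scalar $\eps_{\alpha,\beta}$. By the product formula \eqref{eq: HH cup} we have $\xi_\alpha\cup\xi_\beta=\lfloor\sigma_{\alpha,\beta}\rfloor\xi_{\alpha\beta}$, so everything reduces to checking that $\lfloor\sigma_{\alpha,\beta}\rfloor=\eps_{\alpha,\beta}\lfloor 1\rfloor$ in $\Jac(f^{\alpha\beta})$. The first step is the numerical observation that in each of the cases (a)--(c) one has $d_{\alpha\beta}=d_\alpha+d_\beta$, hence the defect $d_{\alpha,\beta}=\tfrac12(d_\alpha+d_\beta-d_{\alpha\beta})$ vanishes: in (a) the cycles of $\alpha\beta$ are exactly those of $\alpha$ and $\beta$; in (b) a $k$--cycle and an $l$--cycle sharing one index compose to a $(k+l-1)$--cycle; in (c) two disjoint cycles of lengths $k,l$ composed with a transposition joining them give a $(k+l)$--cycle.

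From here I would argue in two moves. First, that $\sigma_{\alpha,\beta}$ is a constant: I would put on Shklyarov's Clifford complex the $\QQ$--grading with $\mathrm{wt}(x_i)=1$, $\mathrm{wt}(\theta_i)=-\tfrac{n-2}{2}$, $\mathrm{wt}(\p_{\theta_i})=\tfrac{n-2}{2}$, and check from \eqref{del-2}, \eqref{del-3} and \eqref{mu} that $\exp(\rmH_f(\bx,u(\bx),\bx))$ has weight $0$, that $\widetilde\xi_u$ has weight $\tfrac{n-2}{2}d_u$, that $\LL$ preserves the weight and that the differential is weight--homogeneous; then the weight of the right-hand side of \eqref{eq: sigma g,h} equals $\tfrac{n-2}{2}(d_\alpha+d_\beta)$ while that of $\lfloor\sigma_{\alpha,\beta}\rfloor\widetilde\xi_{\alpha\beta}$ equals $\deg\sigma_{\alpha,\beta}+\tfrac{n-2}{2}d_{\alpha\beta}$, forcing $\deg\sigma_{\alpha,\beta}=(n-2)\,d_{\alpha,\beta}=0$. (For $n=2$ this is vacuous, since then $\Jac(f^{\alpha\beta})=\CC$.) Second, to pin down the value of this constant I would extract the top $\p_\theta$--degree ($=d_{\alpha\beta}$) part of \eqref{eq: sigma g,h}: since the exponential factors --- both $\exp(\rmH_f(\bx,u(\bx),\bx))$ and the ones built into $\widetilde\xi_\alpha,\widetilde\xi_\beta$ --- only lower the $\p_\theta$--degree, this top part is $\LL(1\otimes 1\otimes\p_{\widetilde\theta_\alpha}\otimes\p_{\widetilde\theta_\beta})=\p_{\widetilde\theta_\alpha}\cdot\p_{\widetilde\theta_\beta}$, which by Proposition~\ref{prop: Clifford multiplications} equals $\eps_{\alpha,\beta}\p_{\widetilde\theta_{\alpha\beta}}$; comparing with the top part $\p_{\widetilde\theta_{\alpha\beta}}$ of $\widetilde\xi_{\alpha\beta}$ and using that $\sigma_{\alpha,\beta}$ is a scalar gives $\sigma_{\alpha,\beta}=\eps_{\alpha,\beta}$.

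It is worth noting that case (a) does not really need the grading input: non-intersecting supports force the operator $\exp(\rmH_f(\bx,\alpha(\bx),\bx))$ to act trivially on $\widetilde\xi_\alpha\otimes\widetilde\xi_\beta$ and yield $\rmH_{f,\alpha\beta}=\rmH_{f,\alpha}+\rmH_{f,\beta}$, so \eqref{eq: sigma g,h} collapses to $\widetilde\xi_\alpha\cdot\widetilde\xi_\beta=\eps_{\alpha,\beta}\widetilde\xi_{\alpha\beta}$ on the nose; and case (c) reduces to case (b) by associativity of $\cup$ and case (a), writing $\xi_\alpha=\pm\,\xi_{\sigma'}\cup\xi_{\sigma''}$ and peeling off $\beta=(i_a,j_b)$ one shared index at a time. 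The part I expect to be genuinely delicate is the grading bookkeeping in the first move: one has to verify that Shklyarov's quasi-isomorphism respects the weight grading, so that ``the coefficient of $\widetilde\xi_{\alpha\beta}$'' really is a homogeneous piece (in effect reproving Proposition~\ref{proposition: cup product preserves bigrading} for these particular elements without circularly appealing to it), and that passing to cohomology is harmless for the top $\p_\theta$--degree comparison in the second move --- which it should be, because for $n\ge 3$ any coboundary that could interfere has, by the weight constraint, strictly smaller top $\p_\theta$--degree.
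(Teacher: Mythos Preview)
Your proposal is correct and, at its core, matches the paper's own argument: the crux in both is the observation that in cases (a)--(c) the Clifford lengths satisfy $d_{\alpha\beta}=d_\alpha+d_\beta$, so in \eqref{eq: sigma g,h} only the top $\p_\theta$--degree survives, and that top part is $\p_{\widetilde\theta_\alpha}\cdot\p_{\widetilde\theta_\beta}=\eps_{\alpha,\beta}\p_{\widetilde\theta_{\alpha\beta}}$ by Proposition~\ref{prop: Clifford multiplications}. This is precisely your ``second move,'' and it is all the paper uses. The paper also notes, exactly as you do, that case~(c) can alternatively be reduced to (a) and (b) by associativity.

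The one difference is your ``first move'': you insert a separate $\QQ$--weight argument to establish in advance that $\sigma_{\alpha,\beta}$ is a scalar. The paper dispenses with this entirely. In Shklyarov's framework the polynomial $\sigma_{u,v}$ is by construction the coefficient of $\widetilde\xi_{uv}$, whose leading term is $\p_{\widetilde\theta_{uv}}$; since every factor $\exp(\rmH_f)$, $\exp(\rmH_{f,\alpha})$, $\exp(\rmH_{f,\beta})$, $\exp(\rmH_{f,\alpha\beta})$ strictly lowers the $\p_\theta$--degree, the top-degree comparison already yields the scalar $\eps_{\alpha,\beta}$ directly --- there is nothing further to rule out. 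So your weight-grading step is a legitimate consistency check but redundant, and the worry you flag at the end (about coboundaries interfering and about verifying compatibility of the quasi-isomorphism with the grading) does not arise in the paper's shorter route.
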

\begin{proof}
 We make use of the formula Eq.\eqref{eq: sigma g,h}.
  The elements $\xi_\alpha$ and $\xi_\beta$ are represented by the classes ${\exp(\rmH_{f,\alpha}) \cdot \p_{\tilde \theta_\alpha}}$ and ${\exp(\rmH_{f,\beta}) \cdot \p_{\tilde \theta_\beta}}$ respectively. The elements $\rmH_{f,\alpha}, \rmH_{f,\beta}$ are non-zero whenever $\alpha,\beta$ are not just transpositions. 
  However it turns out that in the cases under consideration we don't need to use the explicit form of these elements. We divide the proof in the cases of Proposition~\ref{prop: Clifford multiplications}.
  ~
  \\
  {\bf Cases (a) and (b).} The length of the element $\p_{\tilde \theta_{\alpha\beta}}$ is equal to the sum of lengths of the elements $\p_{\tilde \theta_{\alpha}}$ and $\p_{\tilde \theta_{\beta}}$. Therefore to find the product $\xi_\alpha \cup \xi_\beta$ by formule Eq.\eqref{eq: sigma g,h} it's enough to consider the coefficient of $\p_{\tilde \theta_{\alpha\beta}}$ in $\LL\left( \p_{\tilde \theta_{\alpha}} \otimes \p_{\tilde \theta_{\beta}} \right)$ because applying $\rmH_{f,\alpha}$, $\rmH_{f,\beta}$ or $\rmH_f$ one reduces the length of the Clifford element by two.
  ~
  \\
  {\bf Case (c)} holds due to the same reasons. Alternatively, it can be deduced following the proof of this case in Proposition~\ref{prop: Clifford multiplications} but by using $\cup$--product of $\xi_\bullet$ elements rather than Clifford elements.
\end{proof}
\begin{corollary}\label{cor: transpositions cup--product commutation}
 For $i < j < k$ the product of transpositions reads
  \begin{align}
  &\xi_{(i,j)} \cup \xi_{(j,k)} = \xi_{(i,j,k)}, \quad \xi_{(i,k)} \cup \xi_{(j,k)} = -\xi_{(i,k,j)}, \quad \xi_{(j,k)} \cup \xi_{(i,j)} = \xi_{(i,k,j)},
  \\
  &\xi_{(i,j)} \cup \xi_{(i,k)} = -\xi_{(i,k,j)}, \quad \xi_{(i,k)} \cup \xi_{(i,j)} = -\xi_{(i,j,k)}, \quad \xi_{(j,k)} \cup \xi_{(i,k)} = -\xi_{(i,j,k)}.
  \end{align}
  In particular, $\xi_{(i,j)} \cup \xi_{(j,k)} = - \xi_{(j,k)} \cup \xi_{(i,k)}$.
\end{corollary}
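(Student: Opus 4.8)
The plan is to deduce the corollary directly from Proposition~\ref{prop: simple cup-products}. Each transposition is a cycle of length $2$, and for $i<j<k$ every pair of distinct transpositions on $\{i,j,k\}$ shares exactly one index; in particular all six pairs in the statement are of the form in case~(b) of Proposition~\ref{prop: Clifford multiplications}. Hence Proposition~\ref{prop: simple cup-products} applies and gives $\xi_\alpha \cup \xi_\beta = \eps_{\alpha,\beta}\,\xi_{\alpha\beta}$ for each such pair, so it only remains to (i) identify the $3$-cycle $\alpha\beta$ and (ii) evaluate $\eps_{\alpha,\beta}$ from the explicit case~(b) formula specialised to $k=l=2$.

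For step (i) I would compose the two transpositions in $S_N$, using that $\xi_u\cup\xi_v$ lies in the $uv$-sector and that in our convention $(i_1,i_2,i_3)$ is the $3$-cycle with $i_a\mapsto i_{a+1}$; this yields $(i,j)(j,k)=(i,j,k)$, $(i,k)(j,k)=(i,k,j)$, $(j,k)(i,j)=(i,k,j)$, $(i,j)(i,k)=(i,k,j)$, $(i,k)(i,j)=(i,j,k)$, $(j,k)(i,k)=(i,j,k)$, which are exactly the target cycles on the right-hand sides.

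For step (ii), for each pair I would write $\alpha=(i_1,i_2)$ and $\beta=(j_1,j_2)$ with the smallest entry first (our standing convention), read off the positions $a,b\in\{1,2\}$ of the shared index inside $\alpha$ and $\beta$, note whether $i_1\le j_1$ or $i_1>j_1$, and substitute $k=l=2$ together with these data into the case~(b) sign formula of Proposition~\ref{prop: Clifford multiplications}. For example, for $\xi_{(i,j)}\cup\xi_{(j,k)}$ the shared index $j$ sits at position $a=2$ in $(i,j)$ and $b=1$ in $(j,k)$, and $i_1=i<j=j_1$, so $\eps=(-1)^{(k-1+a+b)(l-1)}=(-1)^{4}=+1$, giving $\xi_{(i,j)}\cup\xi_{(j,k)}=\xi_{(i,j,k)}$; the remaining five pairs are handled in exactly the same mechanical fashion. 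The displayed identity $\xi_{(i,j)}\cup\xi_{(j,k)}=-\,\xi_{(j,k)}\cup\xi_{(i,k)}$ is then immediate by comparing the two computed values $\xi_{(i,j,k)}$ and $-\xi_{(i,j,k)}$.

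I expect the only real difficulty to be bookkeeping discipline: one must fix the composition order in $S_N$ and the ``smallest element first'' normalisation once and apply them uniformly across all six cases, since a single slip in the order of composition or in reading off $a$ and $b$ flips a sign. There is no conceptual content beyond Propositions~\ref{prop: Clifford multiplications} and~\ref{prop: simple cup-products}.
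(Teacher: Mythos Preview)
Your proposal is correct and follows precisely the paper's own argument: the paper's proof consists of the single sentence ``This follows immediately via $\varepsilon_{\bullet,\bullet}$ from the proposition above,'' which is exactly the computation you spell out. Your worked example and bookkeeping remarks are accurate elaborations of what the paper leaves implicit.
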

\begin{proof}
    This follows immediately via $\varepsilon_{\bullet,\bullet}$ from the proposition above.
\end{proof}

For $\alpha,\beta \in G^s$ being two nonintersecting cycles $(i_1,\dots,i_k)$ and $(j_1,\dots,j_l)$ respectively, we have by Proposition~\ref{prop: simple cup-products}
\[
  \xi_\alpha \cup \xi_\beta = \begin{cases} \xi_{\alpha\beta}& \quad \text{ if } i_1 < j_1, \\  (-1)^{(k-1)(l-1)}\xi_{\alpha\beta}& \quad \text{ if } i_1 > j_1.\end{cases}
\] 

\begin{remark}
    The $\pm$ sign in the product of transpositions above can be understood in the following way. Associate to a trasposition $(i,j)$ the graph $\Gamma_{(i,j)}$ with two vertices indexed by $i$ and $j$ and an edge between them oriented from the smaller index vertex to the greater index vertice. 
    
    For any two transpositions $\tau_1$ and $\tau_2$ let the graphs $\Gamma_{\tau_1}$ and $\Gamma_{\tau_2}$ have a vertex of the same index. Compose a new graph $\Gamma'$ by gluing $\Gamma_{\tau_1}$ and $\Gamma_{\tau_2}$ by this vertex into a valence two vertex. We have $\eps_{\tau_1,\tau_2} = +1$ if one of its edges is oriented to and the other out and $\eps=-1$ if both edges either are oriented out or both oriented in and $\xi_{\tau_1} \cup \xi_{\tau_2} = \eps_{\tau_1,\tau_2} \xi_{\tau_1 \tau_2}$.
\end{remark}

\begin{proposition}\label{prop: product of non-intersecting elements}
Let $u,v\in G$ be non--intersecting elements. Then
  \[
  \xi_u \cup \xi_v = \eps_{u,v} \xi_{uv}.
  \]
\end{proposition}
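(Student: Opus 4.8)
The plan is to reduce the statement to the factorization properties already available and then to the Clifford-algebra combinatorics of Proposition~\ref{prop: Clifford multiplications}. Since $u,v\in G$ are non--intersecting, their disjoint cycle decompositions together still consist of pairwise non--intersecting cycles, and $uv$ is just the juxtaposition of all those cycles. Hence by Lemma~\ref{lemma: fixed set}(2) we have $\Fix(uv)=\Fix(u)\cap\Fix(v)$ decomposing compatibly into tautological and eigen parts, so in particular $f^{uv}=f^u+f^v$ on complementary sets of variables, $d_{u}+d_{v}=d_{uv}$, hence $d_{u,v}=0$, so $\sigma_{u,v}\in\CC$ and the bigrading is additive as already noted after the definition of the charges.

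\textbf{Key steps.} First I would observe that, because $u$ and $v$ act on disjoint sets of variables, the exponential factors $\exp(\rmH_{f,u})$, $\exp(\rmH_{f,v})$ and the mixed factor $\exp(\rmH_f(\bx,u(\bx),\bx))$ contribute nothing beyond sign: the Clifford element $\p_{\widetilde\theta_{uv}}$ has length equal to the sum of the lengths of $\p_{\widetilde\theta_u}$ and $\p_{\widetilde\theta_v}$ (the index sets $I_u^c$ and $I_v^c$ are disjoint and their union is $I_{uv}^c$), so by the length-counting argument already used in the proof of Proposition~\ref{prop: simple cup-products}, only the top-length term of $\LL(\widetilde\xi_u\otimes\widetilde\xi_v)$ can produce a nonzero coefficient of $\widetilde\xi_{uv}$, and any application of $\rmH_{f,u}$, $\rmH_{f,v}$ or $\rmH_f$ lowers the Clifford length by two. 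Second, I would compute that coefficient: it is the coefficient of $\p_{\widetilde\theta_{uv}}$ in $\LL(\p_{\widetilde\theta_u}\otimes\p_{\widetilde\theta_v})$, and since $u$ and $v$ act on disjoint variables this is $\p_{\widetilde\theta_u}\cdot\p_{\widetilde\theta_v}=\eps_{u,v}\,\p_{\widetilde\theta_{uv}}$ in $\Cl_N[\p_\theta]$, which is exactly case (a) of Proposition~\ref{prop: Clifford multiplications} (where the $\widetilde g$-weights on the non--special/diagonal factors only reorder multipliers). Third, I would observe that the polynomial part is transported without change: in Eq.~\eqref{eq: HH cup} we substitute $\psi(u(\bx))$, but $\xi_v$ carries the class $\lfloor 1\rfloor$, so the product is $\lfloor\sigma_{u,v}\rfloor\xi_{uv}=\eps_{u,v}\lfloor 1\rfloor\xi_{uv}=\eps_{u,v}\xi_{uv}$.

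\textbf{Main obstacle.} The only genuine subtlety is bookkeeping the sign $\eps_{u,v}$ consistently when $u$ or $v$ is itself a product of several non--intersecting cycles and the cycles of $u$ and of $v$ interleave in the ordering by smallest index: one must check that the sign produced by reordering $\p_{\widetilde\theta_{J_1^u}}\cdots\p_{\widetilde\theta_{J_r^u}}\p_{\widetilde\theta_{J_1^v}}\cdots\p_{\widetilde\theta_{J_s^v}}$ into the ordering-by-minimum defining $\p_{\widetilde\theta_{uv}}$ is well defined and compatible with the factorizations used elsewhere (this is where one invokes the final displayed formula for two non--intersecting cycles together with the associativity of the sign, i.e.\ that $\eps_{\bullet,\bullet}$ is a $2$-cocycle-type datum). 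I expect this to be routine given case (a) of Proposition~\ref{prop: Clifford multiplications}, but it is the step that requires care rather than the length-counting, which is immediate.
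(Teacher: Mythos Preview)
Your proposal is correct and is essentially an elaboration of the paper's own proof, which is the single line ``This is immediate by Eq.~\eqref{eq: sigma g,h}.'' Your length-counting argument is exactly the mechanism behind that immediacy: since $I_u^c$ and $I_v^c$ are disjoint, $d_{u,v}=0$, so only the top-degree Clifford term can contribute, and the coefficient is the sign coming from reordering the cycle factors of $\p_{\widetilde\theta_u}\cdot\p_{\widetilde\theta_v}$ into $\p_{\widetilde\theta_{uv}}$---which is precisely $\eps_{u,v}$ as in case~(a) of Proposition~\ref{prop: Clifford multiplications}.
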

\begin{proof}
    This is immediate by Eq.\eqref{eq: sigma g,h}.
\end{proof}

\begin{proposition}\label{prop: cup-product of cycles with inverse}
Let $\sigma \in G^s$ be a cycle $\sigma = (i_1,\dots,i_k)$.
We have
\[
    \xi_\sigma \cup \xi_{\sigma^{-1}} = (-n)^{k-1} \left\lfloor \Phi_{i_1,\dots,i_k}(\bx) \right \rfloor \xi_\id.
\]
where 
\[ 
    \Phi_{i_1,\dots,i_k}(\bx) := \sum_{\substack{0 \le a_1,\dots,a_k \le n \\ \sum a_\bullet = (k-1)(n-2)}} x_{i_1}^{a_1}\dots x_{i_k}^{a_k}.
\]
\end{proposition}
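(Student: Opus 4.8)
The plan is to induct on the length $k$ of the cycle $\sigma=(i_1,\dots,i_k)$, splitting off one transposition at each step. The case $k=1$ is vacuous: then $\sigma=\id$, $\Phi_{i_1}=1$, and $\lfloor 1\rfloor\xi_\id$ is the unit of $\A'_{f,G}$ (equivalently $\sigma_{u,\id}=\sigma_{\id,u}=1$, immediate from \eqref{eq: sigma g,h} since $\widetilde\xi_\id=1$). The two genuine computations are the base case $k=2$, done directly via Shklyarov's formula \eqref{eq: sigma g,h}, and a combinatorial identity in $\Jac(f)$ needed to close the induction.

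For $k=2$ put $\tau=(i_1,i_2)$, so $\tau^2=\id$ and $\xi_\tau\cup\xi_\tau=\lfloor\sigma_{\tau,\tau}\rfloor\xi_\id$. As $\tau$ is a length-$2$ special cycle, $\widetilde\xi_\tau=\partial_{\theta_{i_2}}-\partial_{\theta_{i_1}}$ \emph{exactly}: $\rmH_{f,\tau}$ has $\theta$-degree $2$ while $\partial_{\widetilde\theta_\tau}$ has $\partial_\theta$-degree $1$, so $\exp(\rmH_{f,\tau})$ acts trivially on it. Since $\widetilde\xi_\id=1$ has $\partial_\theta$-degree $0$, inside $\LL\big(\exp(\rmH_f(\bx,\tau(\bx),\bx))\cdot\widetilde\xi_\tau\otimes\widetilde\xi_\tau\big)$ only the bidegree-$(1,1)$ term of the exponential, i.e.\ $\rmH_f(\bx,\tau(\bx),\bx)=\sum_i\sum_{a+b=n-2}(a+1)x_i^ax_{\tau(i)}^b\,\theta_i\otimes\theta_i$ itself, can contribute; after applying $\LL$ and contracting, the $i\notin\{i_1,i_2\}$ terms die, and the $i=i_1$ and $i=i_2$ terms give $-\sum_{a+b=n-2}(a+1)x_{i_1}^ax_{i_2}^b$ and $-\sum_{a+b=n-2}(a+1)x_{i_2}^ax_{i_1}^b=-\sum_{a+b=n-2}(b+1)x_{i_1}^ax_{i_2}^b$ (relabel $a\leftrightarrow b$), whose sum is $-\sum_{a+b=n-2}(a+b+2)x_{i_1}^ax_{i_2}^b=-n\,\Phi_{i_1,i_2}$. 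Hence $\xi_\tau\cup\xi_\tau=-n\lfloor\Phi_{i_1,i_2}\rfloor\xi_\id$.

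For the inductive step ($k\ge3$) I would write $\sigma=\tau\cdot\sigma''$ with $\tau=(i_1,i_2)$ and $\sigma''=(i_2,i_3,\dots,i_k)$ the $(k-1)$-cycle on $\{i_2,\dots,i_k\}$, so $\sigma^{-1}=(\sigma'')^{-1}\cdot\tau$. Since $|I^c_\tau\cap I^c_{\sigma''}|=1$, Proposition~\ref{prop: simple cup-products} (case (b) of Proposition~\ref{prop: Clifford multiplications}) gives $\xi_\tau\cup\xi_{\sigma''}=\eps_1\xi_\sigma$ and $\xi_{(\sigma'')^{-1}}\cup\xi_\tau=\eps_2\xi_{\sigma^{-1}}$ with $\eps_1,\eps_2\in\{\pm1\}$, hence by associativity
\[
\xi_\sigma\cup\xi_{\sigma^{-1}}=\eps_1\eps_2\,\xi_\tau\cup\big(\xi_{\sigma''}\cup\xi_{(\sigma'')^{-1}}\big)\cup\xi_\tau .
\]
Feeding in the induction hypothesis $\xi_{\sigma''}\cup\xi_{(\sigma'')^{-1}}=(-n)^{k-2}\lfloor\Phi_{i_2,\dots,i_k}\rfloor\xi_\id$, moving the polynomial past $\xi_\tau$ via \eqref{eq: HH cup} (which turns $\Phi_{i_2,\dots,i_k}$ into $\Phi_{i_1,i_3,\dots,i_k}$ because $\tau$ swaps $x_{i_1}\leftrightarrow x_{i_2}$), and then applying the base case $\sigma_{\tau,\tau}=-n\sum_{a+b=n-2}x_{i_1}^ax_{i_2}^b$ once more, one gets
\[
\xi_\sigma\cup\xi_{\sigma^{-1}}=\eps_1\eps_2\,(-n)^{k-1}\Big\lfloor\Phi_{i_1,i_3,\dots,i_k}(\bx)\sum_{a+b=n-2}x_{i_1}^ax_{i_2}^b\Big\rfloor\xi_\id .
\]
Two points then finish the proof. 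First, $\big\lfloor\Phi_{i_1,i_3,\dots,i_k}\sum_{a+b=n-2}x_{i_1}^ax_{i_2}^b\big\rfloor=\lfloor\Phi_{i_1,\dots,i_k}\rfloor$ in $\Jac(f)=\CC[\bx]/(x_\bullet^{n-1})$: modulo the ideal only monomials with all exponents $\le n-2$ survive, and a given such monomial $x_{i_1}^{c_1}\cdots x_{i_k}^{c_k}$ (which then automatically satisfies $\sum c_\bullet=(k-1)(n-2)$, forcing $c_1+c_2\ge n-2$ since $\sum_{l\ge3}c_l\le(k-2)(n-2)$) is produced in exactly one way from the product, so both sides equal the sum of all such monomials, i.e.\ $\lfloor\Phi_{i_1,\dots,i_k}\rfloor$. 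Second, $\eps_1\eps_2=+1$, which one checks by unwinding Proposition~\ref{prop: Clifford multiplications}(b) (equivalently, after decomposing $\sigma''$ into transpositions, from Corollary~\ref{cor: transpositions cup--product commutation}): the two sign contributions attached to the shared index $i_2$ cancel in each case.

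I expect the sign identity $\eps_1\eps_2=+1$ to be the main obstacle: the position of $i_2$ inside the convention-ordered cycles $\sigma''$ and $(\sigma'')^{-1}$ depends on $\sigma$, so one must verify that the parities in Proposition~\ref{prop: Clifford multiplications}(b) always conspire to cancel; by contrast the base-case Clifford computation and the $\Jac(f)$ combinatorics are finite and routine.
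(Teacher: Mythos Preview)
Your proof is correct and follows essentially the same route as the paper's: the base case $k=2$ is exactly the paper's Lemma~\ref{square of transposition}, and your inductive step---split off $\tau=(i_1,i_2)$, apply the hypothesis to the remaining $(k-1)$-cycle, commute the resulting polynomial past $\xi_\tau$, then contract $\xi_\tau\cup\xi_\tau$---is the one-step version of the paper's argument, which instead decomposes $\sigma=\tau_1\cdots\tau_{k-1}$ fully and peels off transpositions from the middle outward. The combinatorial identity $\lfloor\Phi_{i_1,i_3,\dots,i_k}\cdot\Phi_{i_1,i_2}\rfloor=\lfloor\Phi_{i_1,\dots,i_k}\rfloor$ you prove is precisely what the paper uses (iterated) in its final line, and your sign discussion $\eps_1\eps_2=+1$ corresponds to the paper's unadorned claim $\xi_\sigma=\xi_{\tau_1}\cup\cdots\cup\xi_{\tau_{k-1}}$ and $\xi_{\sigma^{-1}}=\xi_{\tau_{k-1}}\cup\cdots\cup\xi_{\tau_1}$; in fact your pairing of the two signs makes the cancellation more transparent than the paper's treatment.
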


We first consider the simple case:

\begin{lemma}\label{square of transposition}
For any $i,j \le N$ we have 
$$
\xi_{(i,j)} \cup \xi_{(i,j)} = -n \left\lfloor \Phi_{ij} \right\rfloor \cdot \xi_\id.
$$
\end{lemma}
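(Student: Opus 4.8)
The plan is to compute the product $\xi_{(i,j)} \cup \xi_{(i,j)}$ directly from the defining formula Eq.~\eqref{eq: sigma g,h}, using the explicit description of all the ingredients in the case where $u = v = (i,j)$ is a single transposition. Here $\Fix(u)$ is the hyperplane $x_i = x_j$ (in the tautological and eigen decomposition), $uv = \id$, and $d_u = d_v = 1$, so $d_{u,v} = \tfrac12(1+1-0) = 1$ is indeed a non-negative integer, consistent with a nonzero answer. Since $(i,j)$ is a special cycle of length $2$, Example~\ref{example: special cycle} gives $\widetilde x_{(i,j)} = \tfrac12(x_i + x_j)$ and $f^{(i,j)} = 2\widetilde x_{(i,j)}^n + \sum_{l \neq i,j} x_l^n$, and the Clifford element is $\widetilde\xi_{(i,j)} = \exp(\rmH_{f,(i,j)}) \cdot \p_{\widetilde\theta_{(i,j)}}$ with $\p_{\widetilde\theta_{(i,j)}} = \p_{\theta_i} - \p_{\theta_j}$ (up to the normalization coming from $\widetilde g_{i_1} = 1$, $\widetilde g_{i_2} = 1$ since the cycle is special). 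For a transposition $\rmH_{f,(i,j)} = 0$ since the sum in Eq.~\eqref{del-3} is empty, so $\widetilde\xi_{(i,j)} = \p_{\theta_i} - \p_{\theta_j}$ exactly.

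First I would write out $\exp(\rmH_f(\bx, u(\bx), \bx))$ for $u = (i,j)$. By the formula right after Eq.~\eqref{del-2}, specialized to our Fermat $f$, the only surviving terms in $\rmH_f(\bx, u(\bx), \bx)$ are the $i$-th and $j$-th ones, and since $u$ swaps $x_i \leftrightarrow x_j$, the $i$-th contribution is $\sum_{a+b+c = n-2} x_i^a x_j^b x_i^c\, \theta_i \otimes \theta_i$ and the $j$-th is the symmetric expression with $i,j$ swapped, each multiplied by $\theta_\bullet \otimes \theta_\bullet$. Because $\theta_i^2 = 0$ in $\Cl_N$, the exponential truncates: $\exp(\rmH_f) = 1 + (\text{coefficient})\,\theta_i \otimes \theta_i + (\text{coefficient})\,\theta_j \otimes \theta_j + (\text{cross term})\,\theta_i\theta_j \otimes \theta_i\theta_j$, where the cross term is the product of the $i$- and $j$-coefficients (with an appropriate sign from reordering in the tensor square of $\Cl_N$). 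Then I would apply $\LL$ (Eq.~\eqref{mu}) to $\exp(\rmH_f)\cdot \widetilde\xi_u \otimes \widetilde\xi_v = \exp(\rmH_f) \cdot (\p_{\theta_i} - \p_{\theta_j}) \otimes (\p_{\theta_i} - \p_{\theta_j})$, and extract the coefficient of $\widetilde\xi_{\id} = 1 \in \CC[\p_\theta]$ (the length-zero Clifford element). Since $\LL$ contracts a $\theta$ with a $\p_\theta$, only the $\theta_i\theta_j \otimes \theta_i\theta_j$ piece of $\exp(\rmH_f)$ can fully annihilate the degree-two element $(\p_{\theta_i}-\p_{\theta_j}) \otimes (\p_{\theta_i}-\p_{\theta_j})$ down to a scalar in $\CC[\p_\theta]$. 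I would carefully track the sign $(-1)^{|q_1||p_2|}$ in Eq.~\eqref{mu} and the combinatorics of the contraction $p_1(q_1) \cdot p_2(q_2)$ with $p_1 = p_2 = \theta_i\theta_j$, $q_1 = q_2 = \p_{\theta_i} - \p_{\theta_j}$; this produces the factor $-1$ in front.

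Next I would identify the polynomial coefficient. The cross term's polynomial is the product $\bigl(\sum_{a+c = n-2-b_1}\cdots\bigr)\bigl(\sum_{\cdots}\bigr)$ coming from multiplying the $i$-coefficient $\sum_{a+b+c=n-2} x_i^{a+c} x_j^b$ and the $j$-coefficient $\sum_{a+b+c=n-2} x_j^{a+c} x_i^b$. On the fixed locus $x_i = x_j$ (which we are entitled to impose by the discussion after Eq.~\eqref{eq: HH cup}, and on which $\Jac(f^u)$ lives), each factor becomes a sum of monomials in a single variable, and I would use $\lfloor (\widetilde g_{i_p} x_{i_p})^q \rfloor = \lfloor \widetilde x_{i_1}^q\rfloor$ from Example~\ref{example: special cycle}. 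Setting $x_i = x_j$ and counting exponents, the $i$-factor becomes $\sum_{s=0}^{n-2}(\text{number of }(a,b,c)\text{ with }a+c = n-2-b\text{ fixed})\cdot(\cdots)$ and similarly for the $j$-factor; multiplying the two and re-expressing as a sum over pairs of exponents $(a_1,a_2)$ with $a_1 + a_2 = 2(n-2)$, one should land exactly on $\Phi_{ij}(\bx) = \sum_{0 \le a_1, a_2 \le n,\ a_1 + a_2 = (k-1)(n-2)}x_i^{a_1}x_j^{a_2}$ with $k = 2$, i.e. $a_1 + a_2 = n-2$ — wait, this requires care, since for $k=2$ the claimed exponent sum is $(k-1)(n-2) = n-2$, not $2(n-2)$; the resolution is that the two exponents $a_1,a_2$ of $\Phi_{ij}$ are the exponents of $x_i$ and $x_j$ in a single monomial, and after imposing $x_i = x_j$ on the fixed locus one rewrites the product of sums as a single polynomial in $\widetilde x_{(i,j)}$, so the matching is a bookkeeping exercise in converting the constraint $a+b+c = n-2$ twice into one constraint. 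I expect the main obstacle to be precisely this bookkeeping: getting the sign $(-1)$ right through the $\LL$-contraction and the $\Cl_N$ reorderings, and correctly massaging the double convolution $\bigl(\sum_{a+b+c=n-2}\bigr)\cdot\bigl(\sum_{a'+b'+c'=n-2}\bigr)$ evaluated on $x_i = x_j$ into the stated $-n\lfloor\Phi_{ij}\rfloor$ — in particular explaining where the overall factor $n$ (rather than a binomial coefficient) comes from, which I anticipate arises because $\Jac(f^u) = \CC[\widetilde x]/(\widetilde x^{n-1})$ kills most terms and the surviving ones collapse with multiplicity.
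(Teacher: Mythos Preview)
Your setup is correct --- $\rmH_{f,(i,j)}=0$ so $\widetilde\xi_{(i,j)}=\p_{\theta_i}-\p_{\theta_j}$, and the approach via Eq.~\eqref{eq: sigma g,h} is exactly the right one --- but the crucial step is wrong. The element $(\p_{\theta_i}-\p_{\theta_j})\otimes(\p_{\theta_i}-\p_{\theta_j})$ has bidegree $(1,1)$ in $\CC[\p_\theta]^{\otimes 2}$, so to land on the scalar $\widetilde\xi_\id=1$ you need a bidegree $(-1,-1)$ piece of $\exp(\rmH_f)$, i.e.\ the \emph{linear} term $\rmH_f$ itself, not the quadratic ``cross term'' $\theta_i\theta_j\otimes\theta_i\theta_j$ you single out. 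Indeed $\theta_i\theta_j$ acting on $\p_{\theta_i}$ or $\p_{\theta_j}$ in $\CC[\p_\theta]$ gives zero (degree drops below zero), so the quadratic term contributes nothing. This misidentification is exactly why your bookkeeping goes off the rails at the end: you are trying to multiply two convolutions of total degree $2(n-2)$, whereas the correct answer has degree $n-2$.

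The actual computation is much simpler. With $\rmH_f(\bx,(i,j)(\bx),\bx)=\sum_{a+b=n-2}(a+1)x_i^ax_j^b\,\theta_i\otimes\theta_i+\sum_{a+b=n-2}(a+1)x_j^ax_i^b\,\theta_j\otimes\theta_j$ (plus irrelevant $\theta_k\otimes\theta_k$ terms), the $\theta_i\otimes\theta_i$ piece contracts with the $\p_{\theta_i}\otimes\p_{\theta_i}$ summand and the $\theta_j\otimes\theta_j$ piece with $\p_{\theta_j}\otimes\p_{\theta_j}$, each picking up the sign $(-1)^{|q_1||p_2|}=-1$ from $\LL$. Adding the two contributions and using $(a+1)+(b+1)=n$ on the constraint $a+b=n-2$ gives $-n\sum_{a+b=n-2}x_i^ax_j^b=-n\Phi_{ij}$ directly --- that is where the factor $n$ comes from. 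Note also that since $uv=\id$ the product lands in $\A'_\id=\Jac(f)$, so there is \emph{no} restriction to the locus $x_i=x_j$; your remark about imposing $x_i=x_j$ is incorrect here.
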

\begin{proof}
Assume $i < j$. 
We have $\rmH_{f,(i,j)} = 0$ and in order to compute the product we should compute
\begin{align*}
 \Upsilon\Big( \rmH_f &(\bx,(i,j)(\bx),\bx) \cdot \left( (\p_{\theta_i} - \p_{\theta_j}) \otimes (\p_{\theta_j} - \p_{\theta_j}) \right)\Big)
  \\
  & =  \Upsilon\Big(\sum_{a+b = n-2} (a+1) x_i^a x_j^b \theta_i \otimes \theta_i \cdot (\p_{\theta_i} \otimes \p_{\theta_i}) + \sum_{a+b = n-2} (a+1) x_j^a x_i^b \theta_j \otimes \theta_i \cdot (\p_{\theta_j} \otimes \p_{\theta_j})\Big)
  \\
  & = - \sum_{a+b = n-2} (a+1)x_i^a x_j^b- \sum_{a+b = n-2} (a+1) x_j^a x_i^b = -n \sum_{a+b = n-2} x_j^a x_i^b .
\end{align*}
We get the same for $i > j$ because of two sign changes on the right hand side of the formulae.
\end{proof}

\begin{proof}[Proof of Proposition~\ref{prop: cup-product of cycles with inverse}]

Now assume $\sigma \in G^s$ is a cycle $\sigma = (i_1,\dots,i_k)$. 
We can decompose $\sigma = \tau_{1} \tau_{2}\cdot \dots \cdot \tau_{k-1}$ for $\tau_a = (i_a,i_{a+1})$.
By Proposition~\ref{prop: simple cup-products} we hase
\begin{align*}
    & \xi_\sigma = \xi_{\tau_{1}} \cup \xi_{\tau_{2}} \cup \dots \cup \xi_{\tau_{k-1}},
    \\
    & \xi_{\sigma^{-1}} = \xi_{\tau_{k-1}} \cup \dots \cup \xi_{\tau_{2}} \cup \xi_{\tau_{1}}.
\end{align*}
Therefore we have
\begin{align*}
    \xi_\sigma \cup \xi_{\sigma^{-1}} 
    &= \xi_{\tau_{1}} \cup \xi_{\tau_{2}} \cup \dots \cup \left(  \xi_{\tau_{k-1}} \cup \xi_{\tau_{k-1}} \right) \cup \dots \cup \xi_{\tau_{2}} \cup \xi_{\tau_{1}} && \text{\it (above lemma)}
    \\
    &= \xi_{\tau_{1}} \cup \xi_{\tau_{2}} \cup \dots \cup \left(  \xi_{\tau_{k-2}} \cup -n \lfloor \Phi_{i_{k-1},i_k} \rfloor \xi_\id \cup \xi_{\tau_{k-2}} \right) \cup \dots \cup \xi_{\tau_{2}} \cup \xi_{\tau_{1}} && \text{\it (Eq~\eqref{eq: braided commutativity} and \eqref{eq: G--action on diagonalizable})}
    \\
    &= \lfloor -n\Phi_{i_{1},i_k} \rfloor \xi_\id\cup \xi_{\tau_{1}} \cup \xi_{\tau_{2}} \cup \dots \cup \left(  \xi_{\tau_{k-2}} \cup \xi_{\tau_{k-2}} \right) \cup \dots \cup \xi_{\tau_{2}} \cup \xi_{\tau_{1}}
    \\
    &= \lfloor -n \Phi_{i_{1},i_k} \rfloor \xi_\id\cup \lfloor -n\Phi_{i_{1},i_{k-1}} \rfloor \xi_\id \cup \dots \cup \lfloor -n\Phi_{i_1,i_2} \rfloor \xi_\id 
    \\
    &= (-n)^{k-1}\sum_{\substack{0 \le a_1,\dots,a_k \le n \\ \sum a_\bullet = (k-1)(n-2)}} \left\lfloor x_{i_1}^{a_1}\dots x_{i_k}^{a_k} \right\rfloor \xi_\id.
\end{align*}
\end{proof}

\begin{corollary}
    Let $\alpha=(i_1,\dots,i_k,j_1,\dots,j_q)$ and $\beta = (j_p,\dots,j_{p+l},j_{q-1},\dots,j_1,j_q)$. 
    We have
    \[
        \xi_\alpha \cup \xi_\beta = 
        \eps_{\alpha',\alpha''}\eps_{\beta',\beta''} \eps_{\alpha',\beta'} (-n)^{q-1} 
        \left\lfloor \Phi_{i_1,j_2,\dots,j_q} \right\rfloor \xi_{\alpha\beta},
    \]
    for $\alpha' = (i_1,\dots,i_k,j_1)$ and $\beta' = (j_q,j_p,\dots,j_{p+l})$.
\end{corollary}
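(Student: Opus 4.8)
The plan is to reduce the claimed formula to the combination of three already established computations: Proposition~\ref{prop: simple cup-products} (applied to the case~(c) decompositions), the braided commutativity of Eq.~\eqref{eq: braided commutativity}, and Proposition~\ref{prop: cup-product of cycles with inverse}. First I would unravel the combinatorics of the two cycles. Write $\alpha = \alpha'\cdot\alpha''$ with $\alpha' = (i_1,\dots,i_k,j_1)$ and $\alpha'' = (j_1,\dots,j_q)$ (non--intersecting up to the shared index $j_1$, so this is a case~(b)/(c)-type decomposition), and similarly $\beta = \beta'\cdot\beta''$ with $\beta' = (j_q,j_p,\dots,j_{p+l})$ and $\beta'' = (j_{p+l},\dots,j_q,j_{q-1},\dots,j_1)$; the point of choosing the splittings this way is that $\alpha''$ and $\beta''$ together form a cycle and its inverse on the index set $\{j_1,\dots,j_q\}$ (after relabelling), while $\alpha'$ and $\beta'$ are cycles meeting in at most one index, so all the pieces fall under the hypotheses of Proposition~\ref{prop: Clifford multiplications}/\ref{prop: simple cup-products}.

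Next I would carry out the multiplication following the strategy of Section~\ref{section: the strategy}. Using Proposition~\ref{prop: simple cup-products} one writes $\xi_\alpha = \eps_{\alpha',\alpha''}\,\xi_{\alpha'}\cup\xi_{\alpha''}$ and $\xi_\beta = \eps_{\beta',\beta''}\,\xi_{\beta'}\cup\xi_{\beta''}$. Then the product $\xi_\alpha\cup\xi_\beta$ has the $\xi_{\alpha''}\cup\xi_{\beta''}$ factor sitting in the middle; one moves it past $\xi_{\beta'}$ (or moves $\xi_{\alpha'}$ past it) using the braided commutativity Eq.~\eqref{eq: braided commutativity} together with Eq.~\eqref{eq: G--action on diagonalizable}, exactly as in the proof of Proposition~\ref{prop: cup-product of cycles with inverse}. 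Since $\alpha''$ and $\beta''$ are a $q$--cycle and (essentially) its inverse, Proposition~\ref{prop: cup-product of cycles with inverse} evaluates $\xi_{\alpha''}\cup\xi_{\beta''}$ to $(-n)^{q-1}\lfloor\Phi_{j_1,\dots,j_q}\rfloor\xi_\id$; after the braided move this becomes $(-n)^{q-1}\lfloor\Phi_{i_1,j_2,\dots,j_q}\rfloor\xi_\id$, the index $j_1$ being replaced by $i_1$ because on $\Fix$ of the resulting cycle $\alpha\beta$ the coordinate $\widetilde x_{j_1}$ is identified with $\widetilde x_{i_1}$ (cf. Eq.~\eqref{formula: coordinates on sector}). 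What remains is $\xi_{\alpha'}\cup\xi_{\beta'}$, a product of two cycles meeting in the single index $j_q$ with cyclic product again a cycle, which is $\eps_{\alpha',\beta'}\,\xi_{\alpha'\beta'}$ by Proposition~\ref{prop: simple cup-products}(b). Collecting the scalars gives the stated $\eps_{\alpha',\alpha''}\eps_{\beta',\beta''}\eps_{\alpha',\beta'}(-n)^{q-1}\lfloor\Phi_{i_1,j_2,\dots,j_q}\rfloor\,\xi_{\alpha\beta}$, and one checks $\alpha'\beta'\cdot(\text{the }\id\text{ sector}) = \alpha\beta$ as permutations.

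The main obstacle I expect is bookkeeping rather than conceptual: first, verifying that the chosen decompositions of $\alpha$ and $\beta$ genuinely land in the admissible cases of Proposition~\ref{prop: Clifford multiplications} (one must track precisely which indices are shared and in what cyclic order, and that $\alpha'\beta'$ is a single cycle), and second, pinning down the exact sign and the exact index substitution produced by the braided commutativity move, i.e. justifying that the accumulated signs really collapse to the single product $\eps_{\alpha',\alpha''}\eps_{\beta',\beta''}\eps_{\alpha',\beta'}$ with no extra factor. The substitution $j_1\mapsto i_1$ in $\Phi$ should be argued from Example~\ref{example: special cycle} (the coordinates on the fixed locus of $\alpha\beta$), and the sign from the same manipulation used in the proof of Proposition~\ref{prop: cup-product of cycles with inverse}, where passing a $\xi_\id$--sector class through $\cup$ is governed by Eq.~\eqref{eq: G--action on diagonalizable} and contributes no sign since that class is even. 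Once these two points are settled the corollary follows by direct substitution.
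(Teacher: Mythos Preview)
Your overall strategy matches the paper's exactly --- split each cycle so that a $q$-cycle and its inverse appear adjacently, apply Proposition~\ref{prop: cup-product of cycles with inverse}, then use Proposition~\ref{prop: simple cup-products} on the remaining pieces --- but the execution of the $\beta$-decomposition is wrong and would not run as written.

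Concretely: you write $\beta=\beta'\cdot\beta''$ with $\beta'=(j_q,j_p,\dots,j_{p+l})$ on the left and $\beta''=(j_{p+l},\dots,j_q,j_{q-1},\dots,j_1)$. Two problems. First, this $\beta''$ involves the index $j_{p+l}$, so it is \emph{not} $(\alpha'')^{-1}=(j_q,j_{q-1},\dots,j_1)$, and Proposition~\ref{prop: cup-product of cycles with inverse} does not apply to $\xi_{\alpha''}\cup\xi_{\beta''}$. Second, even if you set $\beta''=(\alpha'')^{-1}$, the order is wrong: one checks directly that $\beta'\cdot(\alpha'')^{-1}\neq\beta$, while $(\alpha'')^{-1}\cdot\beta'=\beta$. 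The paper takes $\beta=(\alpha'')^{-1}\cdot\beta'$, which places $\xi_{\alpha''}\cup\xi_{(\alpha'')^{-1}}$ \emph{immediately} in the middle of $\xi_\alpha\cup\xi_\beta$ with no extra commutation required; your version would force you to commute $\xi_{\alpha''}$ past $\xi_{\beta'}$ first, introducing a further sign you do not track.

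Two smaller points. The cycles $\alpha'=(i_1,\dots,i_k,j_1)$ and $\beta'=(j_q,j_p,\dots,j_{p+l})$ are \emph{disjoint}, not sharing $j_q$ as you claim; this is case~(a) of Proposition~\ref{prop: Clifford multiplications}, not~(b), and indeed $\alpha\beta=\alpha'\beta'$ is a product of two disjoint cycles. Finally, the index substitution $j_1\mapsto i_1$ in $\Phi$ is not a fixed-locus identification (the polynomial lives in the $\id$-sector where no restriction occurs): it comes directly from commuting $\lfloor\Phi_{j_1,\dots,j_q}\rfloor\xi_\id$ past $\xi_{\alpha'}$ via the $\cup$-product formula~\eqref{eq: HH cup}, which replaces each $x_m$ by $x_{\alpha'(m)}$ and hence sends $x_{j_1}\mapsto x_{i_1}$ while fixing $x_{j_2},\dots,x_{j_q}$.
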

\begin{proof}
    We can decompose $\alpha = \alpha'\alpha'' = (i_1,\dots,i_k,j_1)(j_1,\dots,j_q)$ and $\beta = (\alpha'')^{-1}\beta' = (j_q,\dots,j_1)(j_q,j_p,\dots,j_{p+l})$. By using Proposition~\ref{prop: simple cup-products} and proposition above we get:
    \begin{align*}
        \xi_\alpha \cup \xi_\beta &= \eps_{\alpha',\alpha''}\eps_{\beta',\beta''} \xi_{\alpha'}\cup\xi_{\alpha''} \cup \xi_{(\alpha'')^{-1}}\cup\xi_{\beta'} 
        \\
        &= \eps_{\alpha',\alpha''}\eps_{\beta',\beta''} \xi_{\alpha'} \cup (-n)^{q-1} \left\lfloor \Phi_{j_1,\dots,j_q} \right\rfloor \xi_\id \cup \xi_{\beta'}
        \\
        &= \eps_{\alpha',\alpha''}\eps_{\beta',\beta''} (-n)^{q-1} \left\lfloor \Phi_{i_1,j_2,\dots,j_q} \right\rfloor \xi_\id \cup \eps_{\alpha',\beta'} \xi_{\alpha'\beta'}.
    \end{align*}
    by using Eq~\eqref{eq: G--action on diagonalizable} for the last equality.
\end{proof}

Two propositions above allow us to compute the cup--product of any $u,v \in G^s$ that are special by definition. Let's illustrate it with the following example.

\begin{example}
    Let $\alpha = (i,j)(k,l)$, $\beta = (i,k)(j,l)$ and $\gamma := \alpha\beta = (i,l)(j,k)$ with ${i < j < k < l}$. We have
    \begin{align}
        \xi_\alpha \cup \xi_\beta & = \xi_{(i,j)} \cup \xi_{(k,l)} \cup \xi_{(i,k)}\cup \xi_{(j,l)} 
        && \textit{(Proposition~\ref{prop: product of non-intersecting elements})}
        \\
        & 
        = - \xi_{(i,j)} \cup \xi_{(k,l)} \cup \xi_{(j,l)} \cup \xi_{(i,k)}
        && \textit{(Eq~\eqref{eq: braided commutativity} and ~\eqref{eq: G--action on diagonalizable})}
        \\
        & = \xi_{(i,j)} \cup \xi_{(j,k,l)} \cup \xi_{(i,k)} = \xi_{(i,j,k,l)} \cup \xi_{(i,k)}
        && \textit{(Proposition~\ref{prop: simple cup-products})}
        \\
        & = \xi_{(i,l)} \cup \xi_{(i,j,k)} \cup \xi_{(i,k)} = 
        - \xi_{(i,l)} \cup \xi_{(j,k)} \cup \xi_{(i,k)} \cup \xi_{(i,k)}
        \\
        & = \xi_{(i,l)(j,k)} \cup n \lfloor \Phi_{ik} \rfloor \xi_{\id}
        && \textit{(Proposition~\ref{prop: cup-product of cycles with inverse})}
        \\
        & = n \lfloor \Phi_{lj} \rfloor \xi_\gamma.
    \end{align}
\end{example}
Important case of the corollary above is the following. Let $\sigma \in G^s$ be a cycle $\sigma = (i_1,\dots,i_k)$. Then
\[
    \xi_{(i_a,i_b)} \cup \xi_\sigma = - \eps_{(i_a,i_b),\sigma(i_a,i_b)} n \left\lfloor \Phi_{i_a,i_b} \right\rfloor\xi_{(i_a,i_b)\sigma}.
\]


\begin{corollary}\label{cor: generators}
The algebra $\A'_{f,G^s}$ is generated over $\CC$ by the elements $\xi_{(i,j)}$ and $\lfloor x_i \rfloor \xi_\id$.
\end{corollary}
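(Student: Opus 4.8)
The plan is to show that the subalgebra $B\subseteq\A'_{f,G^s}$ generated by $\{\xi_{(i,j)}\}$ and $\{\lfloor x_i\rfloor\xi_\id\}$ contains every sector $\A'_\sigma=\Jac(f^\sigma)\xi_\sigma$, $\sigma\in G^s$. The structural fact that makes everything work is that the unit sector $\A'_\id\cong\Jac(f)$ acts on every other sector simply by restriction of polynomials: since $\id$ is non--intersecting with any $u\in G^s$, Proposition~\ref{prop: product of non-intersecting elements} gives $\xi_\id\cup\xi_u=\xi_u$ (the sign $\eps_{\id,u}$ is $+1$ because $\p_{\widetilde\theta_\id}$ is the empty product), hence the class of $\sigma_{\id,u}$ in $\Jac(f^u)$ is $\lfloor1\rfloor$, and therefore by Eq.~\eqref{eq: HH cup}
\[
\lfloor\phi(\bx)\rfloor\xi_\id\cup\lfloor\psi(\bx)\rfloor\xi_u=\lfloor\phi(\bx)\psi(\bx)\rfloor\xi_u
\]
for all polynomials $\phi,\psi$, the product on the right taken in $\Jac(f^u)$. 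This splits the problem into two independent tasks: generating all of $\A'_\id$, and generating every pure element $\xi_\sigma$.

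First I would treat the unit sector. Taking $u=\id$ in the displayed identity gives $\lfloor x_{j_1}\rfloor\xi_\id\cup\dots\cup\lfloor x_{j_m}\rfloor\xi_\id=\lfloor x_{j_1}\cdots x_{j_m}\rfloor\xi_\id$, and since the monomials $x_1^{a_1}\cdots x_N^{a_N}$ with $0\le a_i\le n-2$ span $\Jac(f)$, this shows $\A'_\id\subseteq B$.

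Next I would show $\xi_\sigma\in B$ for every $\sigma\in G^s$. Write $\sigma=c_1\cdots c_r$ as a product of disjoint cycles; the $c_t$ are pairwise non--intersecting, so iterating Proposition~\ref{prop: product of non-intersecting elements} gives $\xi_\sigma=\pm\,\xi_{c_1}\cup\dots\cup\xi_{c_r}$. For a single cycle $c=(i_1,\dots,i_k)$, write $c=\tau_1\cdots\tau_{k-1}$ with $\tau_a=(i_a,i_{a+1})$; at each stage the partial product $(i_1,\dots,i_a)$ and the next transposition are cycles meeting in exactly one index, so Proposition~\ref{prop: simple cup-products} (case (b)) applies repeatedly and yields $\xi_c=\pm\,\xi_{\tau_1}\cup\dots\cup\xi_{\tau_{k-1}}$ --- this is exactly the computation already used in the proof of Proposition~\ref{prop: cup-product of cycles with inverse}. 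Hence each $\xi_\sigma$ is, up to sign, a $\cup$--product of elements $\xi_{(i,j)}$, so $\xi_\sigma\in B$.

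Finally I would combine the two. For any $\sigma$ and any polynomial $\phi$, the element $\lfloor\phi(\bx)\rfloor\xi_\id$ lies in $\A'_\id\subseteq B$ by the first step and $\xi_\sigma\in B$ by the second, so $\lfloor\phi(\bx)\rfloor\xi_\id\cup\xi_\sigma=\lfloor\phi|_{\Fix(\sigma)}\rfloor\xi_\sigma\in B$; as $\phi$ ranges over all polynomials, $\lfloor\phi|_{\Fix(\sigma)}\rfloor$ ranges over all of $\Jac(f^\sigma)$, because $f^\sigma$ is again of Fermat type (Lemma~\ref{lemma: fixed set}) and the restriction map $\Jac(f)\to\Jac(f^\sigma)$ is surjective. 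Thus $\A'_\sigma\subseteq B$ for every $\sigma$, and summing over $\sigma$ gives $\A'_{f,G^s}=\bigoplus_\sigma\A'_\sigma\subseteq B$. There is no serious obstacle here: the one point to get right is that $\sigma_{\id,u}$ represents an invertible element (in fact $\lfloor1\rfloor$) of $\Jac(f^u)$, so that multiplication by unit--sector elements surjects onto each sector --- and this is immediate from Proposition~\ref{prop: product of non-intersecting elements}; the rest is a direct assembly of the cup--product formulas already established.
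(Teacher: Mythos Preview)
Your proof is correct and follows essentially the same approach as the paper's own (very terse) proof: reduce to generating each $\xi_\sigma$, then decompose $\sigma$ first into disjoint cycles and each cycle into adjacent transpositions, invoking Propositions~\ref{prop: product of non-intersecting elements} and~\ref{prop: simple cup-products} at each step. You are more explicit than the paper about why it suffices to generate the pure elements $\xi_\sigma$ --- namely, that $\sigma_{\id,u}=\lfloor1\rfloor$ and that restriction $\Jac(f)\to\Jac(f^\sigma)$ is surjective --- which the paper leaves implicit in the phrase ``it is sufficient to generate $\xi_w$''.
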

\begin{proof}
It is sufficient to generate $\xi_w$ for all $w\in G^s$. An element $w\in G^s$ could be written in a form $w=\tau_{i_1}\ldots \tau_{i_m}$, for the transpositions $\tau_{i_1} := (i_1,i_1+1)$, where all $\tau_\bullet$ assumed together satisfy the conditions of Proposition~\ref{prop: Clifford multiplications}. Propositions \ref{prop: simple cup-products} and \ref{prop: cup-product of cycles with inverse} give the statement.
\end{proof}

\section{Products in the mixed sectors}\label{section: products in mixed sectors}
In this section we consider the cup products of more general elements.

\begin{proposition}\label{prop: special cycles intersecting by one index}
    Let $u,v \in G$ be special cycles with $u = (i_1,\dots,i_k) g$, $v = (j_1,\dots,j_l) h$ for some $g, h \in G^d$ and $i_a = j_b$ for some $a,b$. 
    Then we have
    \[
        \xi_u \cup \xi_v = 
        \begin{cases}
            \eps_{u,v} \widetilde h_{j_b} \xi_{u\cdot v} \quad \text{ if } i_1 < j_1,
            \\
            \eps_{u,v} \widetilde g_{i_a} \xi_{u\cdot v} \quad \text{ if } i_1 \ge j_1
        \end{cases}        
    \]
    for $\eps_{u,v}$ as in Proposition~\ref{prop: Clifford multiplications}. 
    
    Note that the case $i_1 = j_1$ above can only happen if $a=b=1$. 

\end{proposition}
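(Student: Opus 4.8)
The plan is to read off $\sigma_{u,v}$ from Shklyarov's formula \eqref{eq: sigma g,h}, reducing it by a $\ZZ$--degree count to a monomial identity in the exterior algebra $\CC[\p_\theta]\subset\Cl_N$ and then comparing the outcome with $\p_{\widetilde\theta_{uv}}$ through the explicit cyclic structure of $uv$. First observe that $uv$ is again a special cycle: the cycles $(i_1,\dots,i_k)$ and $(j_1,\dots,j_l)$ share exactly one index, so their product is a single cycle of length $k+l-1$, and the diagonal part of $uv$ has determinant $\det(g)\det(h)=1$ since $u,v$ are special. Hence $\p_{\widetilde\theta_{uv}}$ has degree $(k+l-1)-1=(k-1)+(l-1)$, the sum of the degrees of $\p_{\widetilde\theta_u}$ and $\p_{\widetilde\theta_v}$. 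As $\rmH_{f,u}$, $\rmH_{f,v}$ and $\rmH_f(\bx,u(\bx),\bx)$ all carry $\ZZ$--degree $-2$ and $\LL$ preserves the $\ZZ$--grading, the argument from the proof of Proposition~\ref{prop: simple cup-products} applies verbatim: the coefficient of $\widetilde\xi_{uv}$ in \eqref{eq: sigma g,h} equals the coefficient of $\p_{\widetilde\theta_{uv}}$ in $\LL(\p_{\widetilde\theta_u}\otimes\p_{\widetilde\theta_v})=\p_{\widetilde\theta_u}\cdot\p_{\widetilde\theta_v}$, computed in $\CC[\p_\theta]$.

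Next I would expand this exterior product via $\p_{\widetilde\theta_u}=\sum_{m=1}^k(-1)^{m-1}\widetilde g_{i_m}\,\p_{\theta_{i_1}}\cdots\widehat{m}\cdots\p_{\theta_{i_k}}$ and the analogous formula for $\p_{\widetilde\theta_v}$: it becomes a double sum over the pairs $(i_m,j_{m'})$ of omitted slots, of $\pm\,\widetilde g_{i_m}\widetilde h_{j_{m'}}$ times the corresponding product of $\p_\theta$'s. The $(m,m')$ term vanishes unless $\{i_1,\dots,i_k\}\setminus\{i_m\}$ and $\{j_1,\dots,j_l\}\setminus\{j_{m'}\}$ are disjoint — i.e.\ unless $m=a$ or $m'=b$ — since otherwise both factors contain the slot $\p_{\theta_{i_a}}=\p_{\theta_{j_b}}$. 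The surviving terms contribute, for $m=a,\ m'\neq b$, a single monomial omitting $\p_{\theta_{j_{m'}}}$ with coefficient $\pm\,\widetilde g_{i_a}\widetilde h_{j_{m'}}$; for $m\neq a,\ m'=b$, one omitting $\p_{\theta_{i_m}}$ with coefficient $\pm\,\widetilde h_{j_b}\widetilde g_{i_m}$; and for $m=a,\ m'=b$, one omitting $\p_{\theta_{i_a}}$ with coefficient $\pm\,\widetilde h_{j_b}\widetilde g_{i_a}$. Thus $\p_{\widetilde\theta_u}\cdot\p_{\widetilde\theta_v}$ is a linear combination of precisely the $k+l-1$ length--$(k+l-2)$ monomials occurring in $\p_{\widetilde\theta_{uv}}$, one omitting each index of the merged cycle.

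It then remains to identify the common scalar. The cycle $\sigma\tau$ equals $(i_a,j_{b+1},\dots,j_{b-1},i_{a+1},\dots,i_{a-1})$ and the diagonal part of $uv$ is $(uv)_s=g_{\tau(s)}h_s$; a telescoping evaluation of the ensuing cyclic products of the $g_{i_\bullet}$ and $h_{j_\bullet}$ — where $\det(g)=\det(h)=1$ enters decisively — shows that, dividing the coefficient of the monomial omitting $i_m$ (resp.\ $j_{m'}$) in $\p_{\widetilde\theta_u}\cdot\p_{\widetilde\theta_v}$ by the coefficient $\pm\widetilde{(uv)}_{i_m}$ (resp.\ $\pm\widetilde{(uv)}_{j_{m'}}$) of the corresponding monomial in $\p_{\widetilde\theta_{uv}}$, one gets, up to sign, one and the same constant: $\widetilde h_{j_b}$ when $uv$ is anchored at its smallest index $i_1$ (i.e.\ $i_1<j_1$) and $\widetilde g_{i_a}$ when it is anchored at $j_1$ (i.e.\ $i_1\geq j_1$). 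The signs being consistent, they assemble into a single $\eps_{u,v}\in\{\pm1\}$, and
\[
\p_{\widetilde\theta_u}\cdot\p_{\widetilde\theta_v}=\begin{cases}\eps_{u,v}\,\widetilde h_{j_b}\,\p_{\widetilde\theta_{uv}},& i_1<j_1,\\ \eps_{u,v}\,\widetilde g_{i_a}\,\p_{\widetilde\theta_{uv}},& i_1\geq j_1,\end{cases}
\]
whence $\sigma_{u,v}$ is this scalar and the stated formula for $\xi_u\cup\xi_v$ follows. Specialising all twists to $1$ turns the whole computation into the one behind Proposition~\ref{prop: Clifford multiplications}(b), so $\eps_{u,v}$ is the sign defined there; and if $i_1=j_1$ the unique shared index is $i_1=j_1$ itself, forcing $a=b=1$ and $\widetilde g_{i_a}=\widetilde h_{j_b}=1$, so the two cases then coincide — the closing assertion.

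The delicate part is this last step: following how the cyclic orders of $\sigma$ and $\tau$ interleave inside $\sigma\tau$ relative to the ascending normalisation of $\widetilde\theta_{uv}$, checking that the cyclic products of the $g_{i_\bullet},h_{j_\bullet}$ telescope exactly as needed, and pinning down the sign $\eps_{u,v}$ in each of the sub-cases $i_1<j_1$, $i_1>j_1$, $i_1=j_1$. A cleaner route I would keep in reserve is to write $u=u^1u^2$, $v=v^1v^2$ with $u^1,v^1$ special and $u^2,v^2$ diagonal as in Section~\ref{section: the strategy}, reduce the special pieces to transpositions via Corollary~\ref{cor: generators}, Proposition~\ref{prop: simple cup-products} and braided commutativity, and restore $g,h$ using Proposition~\ref{prop: product of special with diagonal}; there the factors $\widetilde g_{i_a},\widetilde h_{j_b}$ would surface automatically from the normalisations $\xi_{u^1}\cup\xi_{u^2}=\xi_{(\sigma,g)}$.
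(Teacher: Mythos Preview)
Your approach is essentially the same as the paper's: reduce to the exterior-algebra identity $\p_{\widetilde\theta_u}\cdot\p_{\widetilde\theta_v}=c\,\p_{\widetilde\theta_{uv}}$ by the degree count of Proposition~\ref{prop: simple cup-products}, then extract the scalar $c$ by comparing with the explicit expansion of $\p_{\widetilde\theta_{uv}}$ case-by-case in $i_1\lessgtr j_1$, with the sign identified as $\eps_{u,v}$ from Proposition~\ref{prop: Clifford multiplications}(b). The paper streamlines your telescoping step by noting that, once one knows the product is a scalar multiple of $\p_{\widetilde\theta_{uv}}$, it suffices to match a \emph{single} summand on each side --- specifically the $(m,m')=(a,b)$ term --- and read off $\widetilde h_{j_b}$ or $\widetilde g_{i_a}$ directly; your ``reserve'' route via Proposition~\ref{prop: product of special with diagonal} would be circular here, since that proposition is proved only afterwards.
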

\begin{proof}
    The elements $\xi_u$ and $\xi_v$ are represented by $\exp(\rmH_{f,u}) \p_{\widetilde \theta_u}$ and $\exp(\rmH_{f,v}) \p_{\widetilde \theta_v}$ respectively. 
    The product $(i_1,\dots,i_k)(j_1,\dots,j_l)$ is a length $k+l-1$ cycle and the element $\xi_{uv}$ is represented by $\exp(\rmH_{f,uv}) \p_{\widetilde \theta_{uv}}$. 
    The Clifford elements $\p_{\widetilde \theta_u}$, $\p_{\widetilde \theta_v}$ and $\p_{\widetilde \theta_{uv}}$ have degrees $k-1$, $l-1$ and $k+l-2$ respectively.
    By the same reasoning as in Proposition~\ref{prop: simple cup-products} we have $\xi_u \cup \xi_v = c \cdot \xi_{uv}$ where $c \in \CC$ is s.t. in Clifford algebra holds $\p_{\widetilde \theta_u} \cdot \p_{\widetilde \theta_v} = c \cdot \p_{\widetilde \theta_{uv}}$.     

    In order to find the constant $c$ it's enough to assume just one summand on the RHS.
    We have
    \begin{align}
        \p_{\widetilde \theta_u} &= (-1)^{a-1} \widetilde g_{i_a} \p_{\theta_{i_1}} \cdots \widehat a \cdots \p_{\theta_{i_k}} + \dots,
        \\
        \p_{\widetilde \theta_v} &= (-1)^{b-1} \widetilde h_{j_b} \p_{\theta_{j_1}} \cdots \widehat b \cdots \p_{\theta_{j_l}} + \dots.
    \end{align}
    Expanding $\p_{\widetilde \theta_{uv}}$ in the same way as above we should take care of the sign and also constant multiples originating from scaling part of $uv$. 
    Note that we do not need to take care of the sign --- it was essentially computed in the previous section to be $\eps_{u,v}$. The rest requires case by case study.

    \noindent
    {\bf Case 1: $i_1 < j_1$}. We have 
    \[
        \p_{\widetilde \theta_{uv}} = (-1)^{a-1} g_{i_1}\cdots g_{i_{a-1}} 
        \p_{\theta_{i_1}}\cdots \p_{\theta_{i_{a-1}}}\p_{\theta_{j_{b+1}}}\cdots\p_{\theta_{j_l}}\p_{\theta_{j_1}}\cdots \p_{\theta_{j_{b-1}}} \p_{\theta_{i_{a+1}}} \cdots \p_{\theta_{i_k}} + \dots .
    \]
    
    \noindent
    {\bf Case 2: $i_1 > j_1$}. By using the fact that $u$ is special we have
    \[
        \p_{\widetilde \theta_{uv}} = (-1)^{k+b} h_{j_1}\cdots h_{j_{b-1}} \p_{\theta_{j_1}}\cdots \p_{\theta_{j_{b-1}}} \p_{\theta_{i_{a+1}}} \cdots \p_{\theta_{i_k}}\p_{\theta_{i_1}}\cdots \p_{\theta_{i_{a-1}}}\p_{\theta_{j_{b+1}}}\cdots\p_{\theta_{j_l}} + \dots .
    \]
    
    \noindent
    {\bf Case 3: $a = b = 1$}. In this case we have 
    \[
        \p_{\widetilde \theta_{uv}} = \p_{\theta_{j_2}}\cdots \p_{\theta_{j_l}} \p_{\theta_{i_2}} \cdots \p_{\theta_{i_k}} + \dots .
    \]
    
    This concludes the proof.
\end{proof}

\begin{proposition}\label{prop: cup-product of generalized cycles with inverse}
    Let $g = t_{i_1}^{d_1}\cdots t_{i_k}^{d_k} \in G$ be special and $\sigma = (i_1,\dots,i_k)$. 
    
    We have
    \[
        \xi_{(\sigma,g)} \cup \xi_{(\sigma,g)^{-1}} = (-n)^{k-1} \left\lfloor \Phi^{(D)}_{i_1,\dots,i_k}(\bx) \right\rfloor \xi_\id.
    \]
    for $D = -d_1-\dots - d_{k-1}$ and 
    \[
     \Phi^{(D)}_{i_1,\dots,i_k} := \Phi_{i_1,\dots,i_k}(\zeta^{-D}x_{i_1},x_{i_2},\dots,x_{i_k}).
    \]
\end{proposition}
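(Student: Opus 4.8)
The plan is to mimic the proof of Proposition~\ref{prop: cup-product of cycles with inverse} — the case $g=\id$ — carrying a diagonal twist through every step. (One is tempted instead to write $(\sigma,g)=w\sigma w^{-1}$ for a diagonal $w$ solving the relevant coboundary equation, which is possible exactly because $g$ is special, and to transport $\xi_\sigma\cup\xi_{\sigma^{-1}}$ by $w^*$; but this needs the action of a \emph{non--central} diagonal element on the $\sigma$--sector, which is only computed in Section~\ref{section: G--action} using these very products, so I would avoid it.)

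\emph{Step 1 (a twisted square of a transposition).} First I would prove the analogue of Lemma~\ref{square of transposition}, i.e. the $k=2$ case of the Proposition: for a special length--$2$ cycle $\widetilde\tau=\bigl((i,j),\,t_i^m t_j^{-m}\bigr)$ one has $\rmH_{f,\widetilde\tau}=0$ (the only non--fixed eigenvalue of $\widetilde\tau$ is $-1$ and there is nothing to pair it with), so $\widetilde\xi_{\widetilde\tau}=\p_{\widetilde\theta_{\widetilde\tau}}$ and $\widetilde\xi_{\id}=1$; hence $\sigma_{\widetilde\tau,\widetilde\tau}$ is the coefficient of $1$ in $\LL\bigl(\exp(\rmH_f(\bx,\widetilde\tau(\bx),\bx))\cdot\widetilde\xi_{\widetilde\tau}\otimes\widetilde\xi_{\widetilde\tau}\bigr)$, and by a degree count only the linear term of $\exp(\rmH_f)$ contributes. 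Plugging in
\[
  \rmH_f(\bx,\widetilde\tau(\bx),\bx)=\sum_{a+b=n-2}(a+1)\bigl(\zeta_n^{mb}x_i^a x_j^b\,\theta_i\otimes\theta_i+\zeta_n^{-mb}x_j^a x_i^b\,\theta_j\otimes\theta_j\bigr)
\]
and using $\zeta_n^{mn}=1$ to merge the two sums, one obtains $\xi_{\widetilde\tau}\cup\xi_{\widetilde\tau}=-n\lfloor\Phi_{i,j}^{\mathrm{tw}}\rfloor\xi_{\id}$, where $\Phi^{\mathrm{tw}}_{i,j}$ is $\Phi_{i,j}$ with one of the two variables rescaled by a power of $\zeta_n$ determined by $m$ and by whether $i<j$.

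\emph{Step 2 (decomposition into twisted transpositions).} Writing $\sigma=\tau_1\cdots\tau_{k-1}$ with $\tau_a=(i_a,i_{a+1})$, set $\widetilde\tau_a:=\bigl(\tau_a,\,t_{i_a}^{d_a}t_{i_{a+1}}^{-d_a}\bigr)$, again a special cycle. A direct computation with the semidirect product law gives
\[
  \widetilde\tau_1\cdots\widetilde\tau_{k-1}=\bigl(\sigma,\;t_{i_1}^{d_1}\cdots t_{i_{k-1}}^{d_{k-1}}\,t_{i_k}^{-(d_1+\cdots+d_{k-1})}\bigr),
\]
which equals $(\sigma,g)$ \emph{precisely because $g$ is special}, i.e. $d_k=-(d_1+\cdots+d_{k-1})$; moreover each $\widetilde\tau_a^{-1}=\widetilde\tau_a$, so $(\sigma,g)^{-1}=\widetilde\tau_{k-1}\cdots\widetilde\tau_1$. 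Consecutive $\widetilde\tau_a$'s share exactly one index, so Proposition~\ref{prop: special cycles intersecting by one index} (together with Proposition~\ref{prop: Clifford multiplications} for the signs) yields nonzero constants $c,c'$ — products of the signs $\eps_{\bullet,\bullet}$ and of the roots of unity $\widetilde g_\bullet$ — with $\xi_{\widetilde\tau_1}\cup\cdots\cup\xi_{\widetilde\tau_{k-1}}=c\,\xi_{(\sigma,g)}$ and $\xi_{\widetilde\tau_{k-1}}\cup\cdots\cup\xi_{\widetilde\tau_1}=c'\,\xi_{(\sigma,g)^{-1}}$.

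\emph{Step 3 (telescoping and identification).} Therefore $\xi_{(\sigma,g)}\cup\xi_{(\sigma,g)^{-1}}=(cc')^{-1}\,\xi_{\widetilde\tau_1}\cup\cdots\cup\xi_{\widetilde\tau_{k-1}}\cup\xi_{\widetilde\tau_{k-1}}\cup\cdots\cup\xi_{\widetilde\tau_1}$. As in Proposition~\ref{prop: cup-product of cycles with inverse} I would collapse the innermost pair by Step~1, push the resulting $\xi_{\id}$--sector class out past the surrounding factors using~\eqref{eq: HH cup}, \eqref{eq: braided commutativity} and~\eqref{eq: G--action on id} — each passage past $\widetilde\tau_a$ renaming and rescaling the relevant variable by the action of $\widetilde\tau_a$ — and recurse on the shorter word. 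After $k-1$ rounds one is left with $(cc')^{-1}(-n)^{k-1}$ times a product, in $\Jac(f)=\CC[\bx]/(x_1^{n-1},\dots,x_N^{n-1})$, of twisted copies of $\Phi_{i_1,i_a}$, $a=2,\dots,k$. One then uses the congruences $(x_p-x_q)\Phi_{p,q}=x_p^{n-1}-x_q^{n-1}\equiv 0$, whence $x_p^{r}\Phi_{p,q}\equiv x_q^{r}\Phi_{p,q}$ and, iterating, $\Phi_{i_1,\dots,i_k}\equiv\prod_{a=2}^{k}\Phi_{i_1,i_a}$ (and the analogous relations for the twisted $\Phi$'s): transporting every accumulated partial--sum twist $\zeta_n^{-(d_1+\cdots+d_a)}$ onto the common variable $x_{i_1}$ collects them into the single monomial--wise factor $\zeta_n^{-D a_1}$ and identifies the product with $\lfloor\Phi^{(D)}_{i_1,\dots,i_k}\rfloor$. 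A separate check that $(cc')^{-1}$ times the signs produced by the telescoping equals $1$ then yields the formula.

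\emph{Main obstacle.} The only real work is the bookkeeping in Step~3: keeping track of the signs $\eps_{\bullet,\bullet}$ and the roots of unity $\widetilde g_\bullet$ so that the overall constant comes out to be exactly $(-n)^{k-1}$, and verifying that the various partial--sum twists $\zeta_n^{-(d_1+\cdots+d_a)}$ can indeed all be transported, via the $\Jac(f)$ relations, to the uniform factor $\zeta_n^{-D}$ on $x_{i_1}$ that appears in $\Phi^{(D)}_{i_1,\dots,i_k}$. A secondary subtlety, already noted after~\eqref{eq: HH cup}, is that at each step of the telescoping the polynomial must be read in the correct quotient ring ($\Jac(f^{\widetilde\tau_a})$ versus $\Jac(f)$).
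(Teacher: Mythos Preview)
Your proposal is correct and follows essentially the same route as the paper: prove the $k=2$ case as a separate lemma, decompose $(\sigma,g)$ into the special length--$2$ cycles $\alpha_a=(i_a,i_{a+1})\,t_{i_a}^{d_a}t_{i_{a+1}}^{-d_a}$, and telescope exactly as you describe. The one simplification the paper makes is to treat first the case $i_1<\cdots<i_k$, in which Proposition~\ref{prop: special cycles intersecting by one index} gives $c=c'=1$ (so the bookkeeping you flag as the main obstacle disappears entirely), and then to observe that for a general ordering the extra constants arising for $(\sigma,g)$ and for $(\sigma,g)^{-1}$ coincide and hence cancel in the product.
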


We first proof the following lemma.

\begin{lemma}\label{square of mixed transposition}
For $i,j\in\{1,\ldots,N\}$, $i<j$ and $d_1, d_2\in\ZZ$ denote $u_1 := (i,j) \cdot t_i^{d_1}t_j^{-d_1}$, $u_2 := (i,j) \cdot t_i^{d_2}t_j^{-d_2}$. 
We have
\begin{align}
 \xi_{u_1} &\cup\xi_{u_2}
 =
 \begin{cases}
        0 \quad & \text{ if } d_1\not\equiv d_2 \mod n,
        \\
      \left\lfloor\Phi_{ij}^{(d_1)}\right\rfloor\xi_{\id} \quad & \text{ if } d_1\equiv d_2 \mod n.        
        \end{cases}
\end{align}
\end{lemma}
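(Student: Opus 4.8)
The plan is to mimic the proof of Lemma \ref{square of transposition}, keeping track of the extra scaling factors coming from $t_i^{d_1}t_j^{-d_1}$ and $t_i^{d_2}t_j^{-d_2}$. First I would record the Clifford data: since $u_1=(i,j)\cdot t_i^{d_1}t_j^{-d_1}$ is a length-two special cycle with $\widetilde g_i=1$, $\widetilde g_j=\zeta_n^{d_1}$, we have $\widetilde\theta_{u_1}=\theta_i-\zeta_n^{d_1}\theta_j$ and $\p_{\widetilde\theta_{u_1}}=\p_{\theta_i}-\zeta_n^{d_1}\p_{\theta_j}$ (and similarly for $u_2$ with $d_2$); moreover $\rmH_{f,u_1}=\rmH_{f,u_2}=0$ because these are transpositions, so $\widetilde\xi_{u_1}=\p_{\widetilde\theta_{u_1}}$, $\widetilde\xi_{u_2}=\p_{\widetilde\theta_{u_2}}$ exactly. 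The product $u_1u_2=(\id,\,t_i^{d_1-d_2}t_j^{d_2-d_1})$ is diagonal, hence $\xi_{u_1u_2}$ lives in a sector of length $0$; by the degree bookkeeping of Section~\ref{section: HH} (the operator $\LL$ is degree zero and $\exp(\rmH_f)$ raises $\theta$-degree), the coefficient of $\widetilde\xi_{\id}$ in Eq.~\eqref{eq: sigma g,h} picks out exactly the terms of $\rmH_f(\bx,u_1(\bx),\bx)$ quadratic in the relevant $\theta$'s, paired against $\p_{\widetilde\theta_{u_1}}\otimes\p_{\widetilde\theta_{u_2}}$ via $\LL$.

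Next I would compute $\rmH_f(\bx,u_1(\bx),\bx)$ using the explicit formula $\rmH_f(\bx,(\sig,g)(\bx),\bx)=\sum_l\sum_{a+b=n-2}(a+1)g_l^b x_l^a x_{\sig(l)}^b\,\theta_l\otimes\theta_l$. For $u_1=(i,j)t_i^{d_1}t_j^{-d_1}$ the only contributing indices are $l=i$ (with $\sig(i)=j$, $g_i=\zeta_n^{d_1}$) and $l=j$ (with $\sig(j)=i$, $g_j=\zeta_n^{-d_1}$), giving
\[
\rmH_f(\bx,u_1(\bx),\bx)=\sum_{a+b=n-2}(a+1)\Big(\zeta_n^{d_1 b}x_i^a x_j^b\,\theta_i\otimes\theta_i+\zeta_n^{-d_1 b}x_j^a x_i^b\,\theta_j\otimes\theta_j\Big).
\]
Then I apply $\LL$ to $\rmH_f(\bx,u_1(\bx),\bx)\cdot\big((\p_{\theta_i}-\zeta_n^{d_1}\p_{\theta_j})\otimes(\p_{\theta_i}-\zeta_n^{d_2}\p_{\theta_j})\big)$: using $\theta_i(q)$-action and $\theta_j(q)$-action on $\CC[\p_\theta]$, the $\theta_i\otimes\theta_i$ term pairs with the $\p_{\theta_i}\otimes\p_{\theta_i}$ component and with the $\p_{\theta_i}\otimes(-\zeta_n^{d_2}\p_{\theta_j})$ component etc., but only the diagonal pairings $\p_{\theta_i}\otimes\p_{\theta_i}$ and $\p_{\theta_j}\otimes\p_{\theta_j}$ survive the module action (the mixed ones annihilate, since $\theta_i$ acting on $\p_{\theta_j}$-monomials with no $\p_{\theta_i}$ gives something nonzero only after pairing with the matching slot). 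Carefully, the coefficient comes out proportional to $\big(1-\zeta_n^{d_1}\zeta_n^{-d_2}\big)$-type expressions — more precisely I expect the $\theta_i\otimes\theta_i$ summand to contribute $-\sum_{a+b=n-2}(a+1)\zeta_n^{d_1 b}x_i^a x_j^b$ times a factor that is nonzero iff $d_1\equiv d_2$, and the $\theta_j\otimes\theta_j$ summand to contribute the complementary sum $-\sum_{a+b=n-2}(a+1)\zeta_n^{-d_1 b}x_j^a x_i^b\cdot\zeta_n^{d_1}\zeta_n^{-d_2}$; adding and using $\sum_{a+b=n-2}(a+1)=n$ together with the identity $\sum_{a+b=n-2}x_i^ax_j^b=\Phi_{ij}$ (and the substitution $x_i\mapsto\zeta_n^{d_1}x_i$ to get $\Phi_{ij}^{(d_1)}$), I should land on $\big\lfloor\Phi_{ij}^{(d_1)}\big\rfloor\xi_\id$ when $d_1\equiv d_2\bmod n$ and $0$ otherwise.

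The main obstacle I anticipate is precisely this last bookkeeping: getting the $\zeta_n$-powers and the signs from $\LL$ (which carries the Koszul sign $(-1)^{|q_1||p_2|}$) and from the order of the $\theta$'s to combine into the clean dichotomy above, and verifying that the two $\theta_l\otimes\theta_l$ contributions assemble symmetrically into a single copy of $\Phi_{ij}^{(d_1)}$ rather than $\Phi_{ij}^{(d_1)}+\Phi_{ij}^{(d_2)}$ or similar. The vanishing for $d_1\not\equiv d_2$ should fall out automatically because in that case $u_1u_2\neq\id$ is a genuinely nontrivial diagonal element, so its sector has length $0$ but $\xi_{u_1u_2}$ is not $\xi_\id$ and the coefficient of $\widetilde\xi_{\id}$ is literally zero; I would state this first as the quick case and then spend the effort on $d_1\equiv d_2$. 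Finally, Proposition~\ref{prop: cup-product of generalized cycles with inverse} follows from this lemma exactly as Proposition~\ref{prop: cup-product of cycles with inverse} followed from Lemma~\ref{square of transposition}: decompose $(\sigma,g)$ into a product of mixed transpositions via the special cycle decomposition, use braided commutativity Eq.~\eqref{eq: braided commutativity} and Eq.~\eqref{eq: G--action on diagonalizable} to migrate the diagonal factors to the front, and collect the powers of $\zeta_n$ into the single shift $D=-d_1-\dots-d_{k-1}$.
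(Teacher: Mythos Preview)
Your overall strategy---compute $\widetilde\xi_{u_1},\widetilde\xi_{u_2}$ as rank-one Clifford elements, observe $\rmH_{f,u_1}=\rmH_{f,u_2}=0$, write out $\rmH_f(\bx,u_1(\bx),\bx)$ explicitly and pair via $\LL$---is exactly what the paper does, and for $d_1\equiv d_2\bmod n$ your outlined combination of the two $\theta_l\otimes\theta_l$ contributions via $(a+1)+(b+1)=n$ is the same manipulation the paper carries out. (A small slip: by the special-cycle formula one has $\p_{\widetilde\theta_{u_1}}=\p_{\theta_j}-\zeta_n^{d_1}\p_{\theta_i}$, not $\p_{\theta_i}-\zeta_n^{d_1}\p_{\theta_j}$; this only flips an overall sign on each factor and cancels in the product.)

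The genuine gap is your treatment of the case $d_1\not\equiv d_2\bmod n$. Neither of your proposed mechanisms works. First, there is no $(1-\zeta_n^{d_1-d_2})$-type factor in the polynomial coming out of $\LL$: the coefficient of $x_j^ax_i^b$ computes to $(a+1)\zeta_n^{-d_1b}+(b+1)\zeta_n^{-d_1(b+1)+d_2}$, which collapses to $n\zeta_n^{-d_1b}$ precisely when $d_1\equiv d_2$ but does not vanish otherwise. Second, your sentence ``its sector has length~$0$ \ldots\ the coefficient of $\widetilde\xi_\id$ is literally zero'' misidentifies the target: when $d_1\not\equiv d_2$ the product lands in the sector of the nontrivial diagonal element $u_1u_2=t_i^{d_1-d_2}t_j^{d_2-d_1}$, whose Clifford representative $\widetilde\xi_{u_1u_2}=\p_{\theta_i}\p_{\theta_j}$ has degree~$2$, not~$0$; the coefficient of $\widetilde\xi_\id$ is irrelevant. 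The paper's argument is different and is what you need: one carries out \emph{exactly the same} polynomial computation regardless of $d_1,d_2$, obtaining $-\bigl\lfloor\sum_{a+b=n-2}(\text{coeff})\,x_j^ax_i^b\bigr\rfloor\xi_{u_1u_2}$, and then observes that for $d_1\not\equiv d_2$ one has $\Fix(u_1u_2)=\{x_i=x_j=0\}$, so this polynomial---being homogeneous of degree $n-2$ in $x_i,x_j$ with no constant term---has class zero in $\Jac(f^{u_1u_2})$.
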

\begin{proof}
We have $\widetilde\xi_{u_1}=\p_{\theta_j}-\zeta^{d_1}\p_{\theta_i}$ and $\widetilde\xi_{u_2}=\p_{\theta_j}-\zeta^{d_2}\p_{\theta_i}$.  
Since for both elements there is only one--dimensional eigenspace with the eigenvalue different from $1$, the elements $\rmH_{f,u_1}$ and $\rmH_{f,u_2}$ vanish. 
We thus have 
\begin{align*}
\xi_{u_1} &\cup\xi_{u_2}=-\left\lfloor \rmH_{f,j}+\zeta^{d_1+d_2}\rmH_{f,i}\right\rfloor\xi_{t_i^{d_1-d_2}t_j^{-d_1+d_2}}
 \\
 =- & \left\lfloor\sum_{a+b=n-2} (a+1)\zeta^{-d_1b}x_j^ax_i^b+\zeta^{d_1+d_2}\sum_{a+b=n-2} (a+1)\zeta^{d_1b}x_i^ax_j^b\right\rfloor\xi_{t_i^{d_1-d_2}t_j^{-d_1+d_2}}
 \\
 =- & \left\lfloor\sum_{a+b=n-2} \left((a+1)\zeta^{-d_1b}+(b+1)\zeta^{d_1a+d_1+d_2}\right)x_j^ax_i^b\right\rfloor\xi_{t_i^{d_1-d_2}t_j^{-d_1+d_2}}
  \\
 =- & \left\lfloor\sum_{a+b=n-2} \left((a+1)\zeta^{-d_1b}+(b+1)\zeta^{-d_1(b+1)+d_2}\right)x_j^ax_i^b\right\rfloor\xi_{t_i^{d_1-d_2}t_j^{-d_1+d_2}}.
\end{align*}
If $d_1\not\equiv d_2 \mod n$ the fixed set of $t_i^{d_1-d_2}t_j^{-d_1+d_2}$ is given by $x_i=x_j=0$ and the above expression vanishes. If $d_1\equiv d_2 \mod n$ we can further rewrite the above formula as
\begin{align*}
& - \left\lfloor\sum_{a+b=n-2} \zeta^{-d_1b}\left((a+1)+(b+1)\right)x_j^ax_i^b\right\rfloor\xi_\id
\\
& \quad\quad =- n\left\lfloor \sum_{a+b=n-2} \zeta^{-d_1b}x_j^ax_i^b\right\rfloor\xi_\id=\left\lfloor \Phi_{ij}^{(d_1)}\right\rfloor\xi_{\id}.
\end{align*}


\end{proof}

\begin{proof}[Proof of Proposition~\ref{prop: cup-product of generalized cycles with inverse}]

Consider the general case $(\sigma,g) = \sigma  \cdot t_{i_1}^{d_1} \cdots t_{i_k}^{d_k} $. Assume first that $i_1 < i_2 < \dots < i_k$.
Following the same lines as in Proposition~\ref{prop: cup-product of cycles with inverse} set
\[
    \alpha_1 = (i_1,i_2) t_{i_1}^{d_1}t_{i_2}^{-d_1}, \dots, \alpha_{k-1} = (i_{k-1},i_k) t_{i_{k-1}}^{d_{k-1}}t_{i_k}^{-d_{k-1}}
\]
so that we get
\begin{align}
    \alpha_1 \cdots \alpha_{k-1} &= (i_1,i_2) t_{i_1}^{d_1}t_{i_2}^{-d_1} \cdot (i_2,i_3) t_{i_2}^{d_2}t_{i_3}^{-d_2} \cdots (i_{k-1},i_k) t_{i_{k-1}}^{d_{k-1}}t_{i_k}^{-d_{k-1}}
    \\
    &= (i_1,i_2) (i_2,i_3) \cdot t_{i_1}^{d_1} t_{i_2}^{d_2} t_{i_3}^{-d_1-d_2} \cdots (i_{k-1},i_k) t_{i_{k-1}}^{d_{k-1}}t_{i_k}^{-d_{k-1}}
    \\
    &= (i_1,i_2) (i_2,i_3) \dots (i_{k-1},i_k) \cdot t_{i_1}^{d_1} t_{i_2}^{d_2} \cdots  t_{i_{k-1}}^{d_{k-1}}t_{i_k}^{-d_1- \cdots -d_{k-1}} = (\sigma,g).
\end{align}
The last equality holds because $(\sigma,g)$ is special.

We have
\begin{align}
    \xi_{(\sigma,g)} & = \xi_{\alpha_1} \cup \dots \cup \xi_{\alpha_{k-1}}, \quad \xi_{(\sigma,g)^{-1}} = \xi_{\alpha_{k-1}} \cup \dots \cup \xi_{\alpha_{1}}.
\end{align}
By using lemma above Eq~\eqref{eq: braided commutativity} and Eq~\eqref{eq: G--action on diagonalizable} this gives us
\begin{align}
    \xi_{(\sigma,g)} \cup \xi_{(\sigma,g)^{-1}} &= \xi_{\alpha_1} \cup \dots \cup \xi_{\alpha_{k-1}} \cup \xi_{\alpha_{k-1}} \cup \dots \cup \xi_{\alpha_{1}}
    \\
    &= \Phi_{i_{1}i_k}^{(d_{k-1})} \xi_{\id} \cup \Phi_{i_{1}i_{k-1}}^{(d_{k-2})} \xi_{\id} \cup   \cdots \cup \Phi_{i_{1}i_2}^{(d_{1})} \xi_\id.
\end{align}
Due to the relations in the Jacobian algebra the last expression multiplies exactly to $\Phi^{(D)}_{i_1,\dots,i_k}(\bx)$.

For $(i_1,\dots,i_k)$ not ordered ascending consider the same decomposition of $(\sigma,g)$. 
Due to Proposition~\ref{prop: special cycles intersecting by one index} we have $\xi_{(\sigma,g)} = \eps \cdot \xi_{\alpha_1} \cup \dots \cup \xi_{\alpha_{k-1}}$ for some $\eps \in \{1,-1\}$ and also $\xi_{(\sigma,g)^{-1}} = \eps \cdot \xi_{\alpha_{k-1}} \cup \dots \cup \xi_{\alpha_{1}}$. 
Now we can follows the same lines as above. 
\end{proof}

The next proposition resolves the cup-products $\xi_\alpha \cup \xi_h$ for special elements $\alpha$ and any $h \in G^d$. This is essentially the last step of our strategy (see Section~\ref{section: the strategy}).

\begin{proposition}\label{prop: product of special with diagonal}
Consider a special cycle $(\sigma,g) \in G$ and $(\id,h) \in G$ with $\sigma = (i_1,\dots,i_k)$ and ${I_h^c \subseteq I_{(\sigma, g)}^c}$. We have the following two cases.

\begin{itemize}
 \item $h$ is not special. Then $\xi_{(\sigma,g)} \cup \xi_{(\id,h)} = \xi_{(\id,h)}\cup \xi_{(\sigma,g)} = 0$ if $|I_h^c| \ge 2$. For $I_h^c = \{i_a\}$ we have
    \begin{align*}
        \xi_{(\sigma,g)} \cup \xi_{(\id,h)} & = (-1)^{k-1} \widetilde g_{i_a} \xi_{(\sigma,g)(\id,h)},
        \\
        \xi_{(\id,h)}\cup \xi_{(\sigma,g)} & = \widetilde g_{i_a} \xi_{(\id,h)(\sigma,g)}
    \end{align*}
%
 \item $h$ is special. Then $\xi_{(\sigma,g)} \cup \xi_{(\id,h)} = 0$ if $h$ can't be decomposed into the product of special $G$--elements $h_\alpha$ satisfying $|I_{h_\alpha}^c| = 2$. 
 
 Moreover for $h$, s.t. $ I_{h}^c = \lbrace i_a,i_b \rbrace$ with $h_{i_a}h_{i_b}=1$ we have
 \begin{align*}
  \xi_{(\sigma,g)} \cup \xi_{(\id,h)} &= 
  n \cdot \widetilde{g}_{i_{a}}\widetilde{g}_{i_{b}} \cdot \frac{h_{i_p}}{h_{i_p} - 1} \cdot \lfloor \widetilde x_{i_1}^{n-2} \rfloor \xi_{(\sigma,g)(\id,h)},
  \\
  \xi_{(\id,h)} \cup \xi_{(\sigma,g)} &= 
  n \cdot \widetilde{g}_{i_{a}}\widetilde{g}_{i_{b}} \cdot \frac{1}{h_{i_p} - 1} \cdot \lfloor \widetilde x_{i_1}^{n-2} \rfloor \xi_{(\id,h)(\sigma,g)}
 \end{align*}
 for $p := \min(i_a,i_b)$.
\end{itemize}
\end{proposition}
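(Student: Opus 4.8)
The plan is to read every $\sigma_{u,v}$ off Shklyarov's formula Eq.~\eqref{eq: sigma g,h} with $u=(\sigma,g)$ a special cycle of length $k$ and $v=(\id,h)$, organising the computation by Clifford degree. The element $\widetilde\xi_{(\sigma,g)}$ has top term $\p_{\widetilde\theta_{(\sigma,g)}}$ of degree $k-1$, $\widetilde\xi_{(\id,h)}=\p_{\widetilde\theta_{(\id,h)}}$ is homogeneous of degree $|I_h^c|$, and $\widetilde\xi_{(\sigma,gh)}$ has top term of degree $k$ if $gh$ is non-special and of degree $k-1$ if $gh$ is special; since each contraction by a summand $\theta_i\otimes\theta_i$ of $\exp(\rmH_f(\bx,(\sigma,g)(\bx),\bx))$ lowers the total degree by two, exactly $d_{u,v}=\tfrac12(d_u+d_v-d_{uv})$ of them contribute, which here equals $|I_h^c|/2$ when $gh$ is special and $(|I_h^c|-1)/2$ when $gh$ is non-special. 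In particular $\sigma_{u,v}=0$ the moment this is not a non-negative integer.

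First I would dispose of all vanishing claims. For $h$ non-special with $|I_h^c|$ even, and for $h$ special with $|I_h^c|$ odd, $d_{u,v}\notin\ZZ$ and we are done. In the remaining cases with $|I_h^c|\ge2$, argue in $\CC[\partial_\theta]$: the $d_{u,v}$ contractions use $d_{u,v}$ distinct indices (because $\rmH_f$ has only squares $\theta_i\otimes\theta_i$, and $\theta_i^2=0$), and they must all lie in $I_h^c$ or else $\widetilde\xi_{(\id,h)}$ is annihilated; the remaining part of $\widetilde\xi_{(\id,h)}$ then has degree $|I_h^c|-d_{u,v}\ge 2$ with all indices in $\{i_1,\dots,i_k\}$, whereas the remaining part of $\widetilde\xi_{(\sigma,g)}$ is supported on $\{i_1,\dots,i_k\}$ minus those $d_{u,v}$ indices and one more; the product of the two in $\CC[\partial_\theta]$ vanishes unless $|I_h^c|\le d_{u,v}+1$, which never holds once $|I_h^c|\ge2$. (A separate check shows the lower-degree $\rmH_{f,(\sigma,g)}$-corrections to $\widetilde\xi_{(\sigma,g)}$ cannot reach the required output degree.) The same reasoning gives $\xi_{(\id,h)}\cup\xi_{(\sigma,g)}=0$.

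For Case A' ($h$ non-special, $I_h^c=\{i_a\}$) one has $d_{u,v}=0$, so $\sigma_{u,v}$ is simply the coefficient of $\widetilde\xi_{(\sigma,gh)}$ in $\p_{\widetilde\theta_{(\sigma,g)}}\cdot\partial_{\theta_{i_a}}$, computed in $\CC[\partial_\theta]$: only the summand of $\p_{\widetilde\theta_{(\sigma,g)}}$ omitting $\partial_{\theta_{i_a}}$ survives, carrying the coefficient $(-1)^{a-1}\widetilde g_{i_a}$, and reinserting $\partial_{\theta_{i_a}}$ into place yields an extra $(-1)^{k-a}$; since $gh$ is non-special, $\widetilde\xi_{(\sigma,gh)}$ has top term the untwisted $\partial_{\theta_{i_1}}\cdots\partial_{\theta_{i_k}}$, so $\sigma_{u,v}=(-1)^{k-1}\widetilde g_{i_a}$, and the $G$-grading of Theorem~\ref{theorem: Shklyarov} rules out other sectors. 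The opposite order is identical (the reinsertion sign becomes $(-1)^{a-1}(-1)^{a-1}=1$), or follows from Eq.~\eqref{eq: braided commutativity} and Eq.~\eqref{eq: G--action on id}.

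Finally Case B' ($h$ special, $I_h^c=\{i_a,i_b\}$, so $h_{i_a}h_{i_b}=1$): here $d_{u,v}=1$. For $k=2$, where $\widetilde\xi_{(\sigma,g)}$ is already homogeneous, one applies a single contraction by $\rmH_f(\bx,(\sigma,g)(\bx),\bx)=\sum_i\sum_{a'+b'=n-2}(a'+1)g_i^{b'}x_i^{a'}x_{\sigma(i)}^{b'}\,\theta_i\otimes\theta_i$ — only $i\in\{i_a,i_b\}$ contribute — obtaining a degree-$(k-1)$ element $\sum_{i\in\{i_a,i_b\}}(\cdots)\,\partial_{\theta_\bullet}$; writing it as $\lfloor\sigma_{u,v}\rfloor\widetilde\xi_{(\sigma,gh)}$ and using that $\widetilde\xi_{(\sigma,gh)}$ now has the twisted top term, one gets two expressions for $\sigma_{u,v}$ whose equality in $\Jac(f^{(\sigma,gh)})$ — forced by well-definedness of $\cup$ — together with the relations $\lfloor x_{i_m}^{n-2}\rfloor=\widetilde{(gh)}_{i_m}^{-(n-2)}\lfloor\widetilde x_{(\sigma,gh)}^{n-2}\rfloor$ of Example~\ref{example: special cycle} and the trivial identity $(a'+1)+(b'+1)=n$, pins down $\sigma_{u,v}=n\,\widetilde g_{i_a}\widetilde g_{i_b}\,\tfrac{h_{i_p}}{h_{i_p}-1}\,\widetilde x_{(\sigma,gh)}^{n-2}$ with $p=\min(i_a,i_b)$; the other order gives the same with $\tfrac{h_{i_p}}{h_{i_p}-1}$ replaced by $\tfrac{1}{h_{i_p}-1}$. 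For general $k$ I would reduce to $k=2$: write $(\sigma,g)=(\tau_1,g_1)(\tau_2,g_2)\big((i_a,i_b),g_3\big)$ with $\tau_1\tau_2=\sigma(i_a,i_b)$ the two disjoint cycles separating $i_a$ from $i_b$, so that $\xi_{(\sigma,g)}=c\,\xi_{(\tau_1,g_1)}\cup\xi_{(\tau_2,g_2)}\cup\xi_{((i_a,i_b),g_3)}$ by Proposition~\ref{prop: special cycles intersecting by one index}; then apply the $k=2$ formula to $\xi_{((i_a,i_b),g_3)}\cup\xi_{(\id,h)}$ and reassemble via Proposition~\ref{prop: special cycles intersecting by one index} and Eq.~\eqref{eq: HH cup}, transporting the factor $\widetilde x^{n-2}$ through the two remaining cup products. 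I expect this reassembly — accumulating the roots of unity ($\widetilde g$ versus $\widetilde{g_3}$, the images of $\widetilde x$ under $(\tau_1,g_1)$ and $(\tau_2,g_2)$) and the sign $c$, and verifying that everything collapses to $\widetilde x_{(\sigma,gh)}^{n-2}$ in $\Jac(f^{(\sigma,gh)})$ — to be the main obstacle; a direct Shklyarov computation for $k\ge3$ is also possible but is complicated by the nonvanishing $\rmH_{f,(\sigma,g)}$-corrections to $\widetilde\xi_{(\sigma,g)}$.
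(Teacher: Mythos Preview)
Your Case~A' (non-special $h$ with $|I_h^c|=1$) matches the paper's Case~1 in spirit: both read $\sigma_{u,v}$ directly off the Clifford product $\p_{\widetilde\theta_{(\sigma,g)}}\cdot\partial_{\theta_{i_a}}$. The paper's justification for ignoring all the correction terms is slightly cleaner than your degree count, though: since $(\sigma,gh)$ is a non-special cycle, $\Fix((\sigma,gh))$ has $\Fix^E=0$ by Lemma~\ref{lemma: fixed set}, so $\Jac(f^{(\sigma,gh)})$ sees only constants, and every $\bx$-dependent coefficient coming from $\rmH_f$, $\rmH_{f,(\sigma,g)}$ or $\rmH_{f,(\id,h)}$ vanishes in the target sector. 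This disposes of both the $|I_h^c|=1$ formula and the $|I_h^c|\ge2$ non-special vanishing in one stroke.

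Where you diverge substantially is Case~B' ($h$ special). The paper does \emph{not} attack this by a direct Shklyarov computation; instead it decomposes the diagonal element as $\xi_{(\id,h)}=\xi_{h_{j_1}}\cup\dots\cup\xi_{h_{j_q}}$ into single-index pieces (Proposition~\ref{prop: diagonal elements product}/Proposition~\ref{prop: product of non-intersecting elements}) and then peels them off one by one using the already-proved Case~1. For the explicit $|I_h^c|=2$ formula this becomes two applications of Case~1 plus the known product $\xi_{t_{i_b}^d}\cup\xi_{t_{i_b}^{-d}}=\tfrac{n}{\zeta^d-1}\lfloor x_{i_b}^{n-2}\rfloor\xi_\id$ from Proposition~\ref{prop: diagonal elements product}, followed by Eq.~\eqref{formula: coordinates on sector} to rewrite $x_{i_b}^{n-2}$ as $\widetilde x_{i_1}^{n-2}$. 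No decomposition of $(\sigma,g)$ via Proposition~\ref{prop: special cycles intersecting by one index}, no reassembly of $\widetilde g$-factors, and crucially no separate treatment of small versus large $k$: the same four-line computation works for every $k$. Your reduction $(\sigma,g)=(\tau_1,g_1)(\tau_2,g_2)((i_a,i_b),g_3)$ is in principle workable but, as you anticipate, the bookkeeping of roots of unity is the obstacle; the paper's trick of decomposing $h$ rather than $(\sigma,g)$ eliminates it entirely.

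There is also a genuine gap in your vanishing argument. Your parenthetical ``a separate check shows the lower-degree $\rmH_{f,(\sigma,g)}$-corrections to $\widetilde\xi_{(\sigma,g)}$ cannot reach the required output degree'' is not correct as stated: for $h$ special with $|I_h^c|=2m\ge4$, the degree-$(k-1-2)$ piece of $\exp(\rmH_{f,(\sigma,g)})\p_{\widetilde\theta_{(\sigma,g)}}$ combined with $m-1$ factors of $\rmH_f$ does land in degree $k-1$, so degree counting alone does not rule these terms out. The paper sidesteps this completely because its iterative Case~1 argument never needs to look at $\rmH_{f,(\sigma,g)}$ at all --- at each step the target sector is non-special, hence has trivial $\Jac$, and all polynomial coefficients die.
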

\begin{proof}
~\\
{\bf Case 1, $h$ is not special}.\\
The vectors $\xi_{(\id,h)}$ and $\xi_{(\sigma,g)}$ are represented by the Clifford elements $\exp(\rmH_{f,(\id,h)}) \cdot \p_{\theta_{(\id,h)}}$ and $\exp(\rmH_{f,(\sigma,g)}) \cdot \p_{\theta_{(\sigma,g)}}$ respectively. 
However the fixed locus of $(\sigma,gh)$ is $0$ and the expansion of both $\rmH_{f,(\id,h)}$ and $\rmH_{f,(\sigma,g)}$ introduces the $\bx$--multiples that vanish in $(\sigma,gh)$--th sector. Due to this reasoning we have 
\begin{align}
    & \xi_{(\sigma,g)} \cup \xi_{(\id,h)} = c \cdot \xi_{\theta_{(\sigma,gh)}}, \quad c \in \CC,
    \\
    \text{ where } & \p_{\theta_{(\sigma,g)}} \cdot \p_{\theta_{(\id,h)}} = c \cdot \p_{\theta_{(\sigma,gh)}}.
\end{align}
The scalar $c$ is only non--zero if $|I^c_h| = 1$. In this case assume $I_h^c = \{h_{i_a}\}$. We have
\begin{align}
    \p_{\theta_{(\sigma,g)}} \cdot \p_{\theta_{(\id,h)}} &= \sum_{q=1}^k (-1)^{q-1} \widetilde g_{i_q} \p_{\theta_{i_1}} \cdot \dots \widehat{q} \dots \cdot \p_{\theta_{i_k}} \cdot \p_{\theta_{i_a}}.
\end{align}
The sum obtained gives only one non--zero summand for the index $q=a$. In order to comply with our basis choice we have to further commute $\p_{\theta_{i_a}}$ on the $a$--th place what introduces the additional sign $(-1)^{k-a}$.

The product $\xi_{(\id,h)}\cup \xi_{(\sigma,g)}$ is treated completely analogously.
~\\
{\bf Case 2, $h$ is special}.\\
Let $I^c_h = \{j_1,\dots,j_q\}$. Then there are $h_{j_a} \in G_f^d$ acting on $x_{j_a}$ only, s.t. $h = h_{j_1}\cdots h_{j_q}$. We have
\begin{align}
    \xi_{(\sigma,g)} \cup &\xi_{(\id,h)} = \xi_{(\sigma,g)}\cup \xi_{(\id,h_{j_1})} \cup \xi_{(\id,h_{j_2})} \cup \dots \cup \xi_{(\id,h_{j_q})}
    &  \textit{(by Proposition~\ref{prop: diagonal elements product})}
    \\
    &= (-1)^{k-1} \widetilde g_{j_1} \cdot \xi_{(\sigma,g h_{j_1})} \cup \xi_{(\id,h_{j_2})} \cup \dots \cup \xi_{(\id,h_{j_q})}.
    & \textit{(by Case 1 above)}
\end{align}
Now $(\sigma,g h_{j_1})$ is not special. By the same argument as in Case 1 above one gets immediately that ${\xi_{(\sigma,g h_{j_1})} \cup \xi_{(\id,h_{j_\bullet})}}$ can only be non-zero when $(\sigma,g h_{j_1} h_{j_\bullet})$ is special. This is equivalent to the fact that for $h_{j_1}$, we have started with, there is a pair group element $h_{j_\bullet}$, s.t. $h_{j_1}h_{j_\bullet} = 1$.

Assume $h=t_{i_a}^dt_{i_b}^{-d}$ with $i_a < i_b$. Then we have $\xi_{(\id,h)} = \xi_{t_{i_a}^d}\cup\xi_{t_{i_b}^{-d}}$. By Case 1 above we have two equalities
\begin{align}
    \xi_{(\sigma,g)}\cup\xi_{t_{i_a}^d} & =(-1)^{k-1}\widetilde{g}_{i_{a}}\cdot \xi_{(\sigma,g t^d_{i_a})}
    \\
    \xi_{(\sigma,g t^d_{i_a}t^{-d}_{i_b})}\cup\xi_{t_{i_b}^d} & =(-1)^{k-1}\widetilde{g}_{i_{b}} \zeta_n^{d} \cdot \xi_{(\sigma,gt^d_{i_a})}
\end{align}
implying together with Propositions~\ref{prop: diagonal elements product}
\begin{align}
\xi_{(\sigma,g)}\cup& \xi_{(\id,h)}  = \xi_{(\sigma,g)}\cup\xi_{t_{i_a}^d}\cup\xi_{t_{i_b}^{-d}}
= (-1)^{k-1} \widetilde{g}_{i_{a}} \cdot \xi_{(\sigma,gt^d_{i_a})}\cup \xi_{t_{i_b}^{-d}}
\\
& = \frac{\widetilde{g}_{i_{a}}}{\zeta^d \widetilde{g}_{i_{b}}}\cdot \xi_{(\sigma,gt^d_{i_a}t^{-d}_{i_b})}\cup\xi_{t_{i_b}^d}\cup \xi_{t_{i_b}^{-d}}
= 
\frac{\widetilde{g}_{i_{a}}}{\zeta^d \widetilde{g}_{i_{b}}}\cdot \xi_{(\sigma,gt^d_{i_a}t^{-d}_{i_b})} \cup \frac{n}{\zeta^d - 1} \lfloor x_{i_b}^{n-2} \rfloor \xi_\id 
\\
& = 
\frac{\widetilde{g}_{i_{a}}}{\zeta^d \widetilde{g}_{i_{b}}} \cdot \frac{n \left( \widetilde{g}_{i_{b}} \zeta^{d} \right)^{2}}{\zeta^d - 1} \cdot \lfloor \widetilde x_{i_1}^{n-2} \rfloor \xi_{(\sigma,gh)}
= 
\widetilde{g}_{i_{a}}\widetilde{g}_{i_{b}} \cdot n \frac{\zeta^d}{\zeta^d - 1} \cdot \lfloor \widetilde x_{i_1}^{n-2} \rfloor \xi_{(\sigma,gh)}.
\end{align} 
This gives the formula for $\xi_{(\sigma,g)}\cup \xi_{(\id,h)}$. The product $\xi_{(\id,h)} \cup \xi_{(\sigma,g)}$ is computed via the same technique.
\end{proof}

\begin{corollary}\label{corollary: vanishing}
    Let $\alpha,\beta \in G$ have a disjoint cycle decomposition $\alpha~=~\prod_{a=0}^p u_a$, $\beta~=~\prod_{b=0}^q v_b$ respectively. For any fixed $k \in \{1,\dots,N\}$ let $a$ and $b$ be s.t. $k \in I_{u_a}^c \cap I_{v_b}^c$.
    
    Then if neither $u_a$ nor $v_b$ is special and if $\det(u_a)\det(v_b)\neq \id$, we have $\xi_\alpha \cup \xi_\beta = 0$.
\end{corollary}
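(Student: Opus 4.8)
The plan is to reduce, via the strategy of Section~\ref{section: the strategy}, to the claim that a single product of two diagonal elements supported at the coordinate $x_k$ vanishes, and then to invoke the first bullet of Proposition~\ref{prop: diagonal elements product}. First I would record that $\cup$ respects the $G$-grading, so $\xi_\alpha\cup\xi_\beta$ lies in $\Jac(f^{\alpha\beta})\xi_{\alpha\beta}$, and it is this class that must be shown to vanish; and since $\A'_{f,G}\subset\A'_{f,G_f}$ is a subalgebra, I may carry out the computation inside $\A'_{f,G_f}$, where all elementary diagonal elements $t_i^{\bullet}$ are available.

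Next I would peel off a diagonal factor at $x_k$. Put $\zeta_n^{m}=\det(u_a)$ and $\zeta_n^{m'}=\det(v_b)$. For a non-special cycle $u_a$, the special cycle decomposition of Section~\ref{section: multiplication table} together with the Remark that the diagonal factor may be attached to any coordinate of the cycle lets me write $u_a=\widehat u_a\cdot t_k^{m}$ with $\widehat u_a$ special; if $u_a$ is instead the diagonal part, I split off the $x_k$-component of $u_a$ as a non-intersecting factor $t_k^{m}$. Either way Proposition~\ref{prop: product of special with diagonal} (resp.\ Proposition~\ref{prop: product of non-intersecting elements}) yields $\xi_{u_a}=c_a\,\xi_{\widehat u_a}\cup\xi_{t_k^{m}}$, and symmetrically $\xi_{v_b}=c_b\,\xi_{t_k^{m'}}\cup\xi_{\widehat v_b}$, with $c_a,c_b\in\CC^{\ast}$. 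Because $u_a$ and $v_b$ are non-special, $m\not\equiv0$ and $m'\not\equiv0\pmod n$; because $\det(u_a)\det(v_b)\neq\id$, also $m+m'\not\equiv0\pmod n$.

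Then I would reassemble the product. The remaining cycles of $\alpha$ (resp.\ of $\beta$) are non-intersecting with $u_a$ (resp.\ with $v_b$) and with one another, so by Proposition~\ref{prop: product of non-intersecting elements} I may reorder the factors of $\xi_\alpha$ so that $\xi_{u_a}$ — and hence $\xi_{t_k^{m}}$ — stands at the far right, and the factors of $\xi_\beta$ so that $\xi_{v_b}$ — and hence $\xi_{t_k^{m'}}$ — stands at the far left, at the cost of signs only. Associativity of $\cup$ then exhibits $\xi_\alpha\cup\xi_\beta$ as a nonzero scalar times a $\cup$-product in which $\xi_{t_k^{m}}\cup\xi_{t_k^{m'}}$ occurs as a consecutive block. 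By the first bullet of Proposition~\ref{prop: diagonal elements product} this block equals $0$: since $m$, $m'$ and $m+m'$ are all $\not\equiv0\pmod n$, the index $k$ is moved by each of $t_k^{m}$, $t_k^{m'}$ and $t_k^{m+m'}$, so $I_{t_k^{m}}\cup I_{t_k^{m'}}\cup I_{t_k^{m+m'}}=\{1,\dots,N\}\setminus\{k\}\neq I_{\id}$. Therefore $\xi_\alpha\cup\xi_\beta=0$.

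I expect the main obstacle to be the bookkeeping of the middle two steps: verifying that the diagonal factor $t_k^{m}$ can legitimately be relocated onto the coordinate $x_k$ (for the diagonal part this means reading $\det(u_a)$ and $\det(v_b)$ as the scalings of $x_k$), and checking that after all the reorderings the two diagonal factors $\xi_{t_k^{m}}$ and $\xi_{t_k^{m'}}$ really do become adjacent rather than being separated by $\widehat u_a$ and $\widehat v_b$ — it is exactly this adjacency that makes the quoted vanishing apply verbatim. Once adjacency is secured the conclusion is immediate and no Clifford-algebra or Jacobian-ring computation is required.
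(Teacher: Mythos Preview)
Your proposal is correct and follows essentially the same route as the paper: decompose $\xi_\alpha$ and $\xi_\beta$ via Proposition~\ref{prop: product of non-intersecting elements} so that $\xi_{u_a}$ and $\xi_{v_b}$ are adjacent, then use Proposition~\ref{prop: product of special with diagonal} (or Corollary~\ref{corollary: non-special decomposition}) to peel off the diagonal factors $\xi_{t_k^r}$ and $\xi_{t_k^s}$ at the coordinate $k$, and finally invoke the first bullet of Proposition~\ref{prop: diagonal elements product} to conclude $\xi_{t_k^r}\cup\xi_{t_k^s}=0$. The paper's proof is slightly terser --- it writes $\xi_{u_a}\cup\xi_{v_b}=c\cdot\xi_{u_a'}\cup\xi_{t_k^r}\cup\xi_{t_k^s}\cup\xi_{v_b'}$ in one line --- but the content is identical, and your worry about adjacency is resolved exactly as you describe.
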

\begin{proof}
    We have $\xi_{\alpha} = \cup \xi_{u_\bullet}$ and $\xi_{\beta} = \cup \xi_{v_\bullet}$. The domains of $u_i$ and $u_j$ do not intersect for different indices $i,j$ and therefore we have $\xi_g \cup \xi_h = \pm \left(\cup_{i \neq a} \xi_{u_i} \right) \cup \xi_{u_a} \cup \xi_{v_b} \cup \left( \cup_{j \neq b} \xi_{v_j} \right)$. Take $G_f^d$--elements $\alpha' := t_k^r$ and $\beta' := t_k^s$ with $r$ and $s$, s.t. $\zeta^r_n = \det(u_a)$ and $\zeta^s_n = \det(v_b)$. By the proposition above we have $\xi_{u_a} \cup \xi_{v_b} = c \cdot \xi_{u_a'} \cup \xi_{t_k^r} \cup \xi_{t_k^s} \cup \xi_{v_b'}$ for $u_a',v_b'$ begin s.t. $u_a = u_a'\cdot t_k^r$, $v_b = t_k^s \cdot v_b'$ and non--zero constant $c$.
    
    By Proposition~\ref{prop: diagonal elements product} the product $\xi_{t_k^r} \cup \xi_{t_k^s}$ vanishes exactly when none of $u_a$, $v_b$ is special and $\det(u_a)\det(v_b)\neq \id$.
\end{proof}





\begin{corollary}\label{corollary: non-special decomposition}
    For any non--special $(\sigma,g) \in G$, $\sigma \neq \id$ there is special $(\sigma,g')$ and $(\id,g'')$, s.t. for some non--zero constant $c$ we have 
    \[
        \xi_{(\id,g'')} \cup \xi_{(\sigma,g')} = c \cdot \xi_{(\sigma,g)}.
    \]
\end{corollary}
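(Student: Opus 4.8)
The plan is to reduce the statement, via the disjoint cycle decomposition of Section~\ref{section: generalized cycle decomposition}, to the case of a single non-special cycle, and there to invoke Proposition~\ref{prop: product of special with diagonal}. Throughout I allow the auxiliary elements $(\id,g'')$ and $(\sigma,g')$ to lie in $G_f$ rather than in $G$, which is harmless since $\A'_{f,G}\subset\A'_{f,G_f}$ is a subalgebra.

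\emph{Per-cycle step.} Let $(\tau,h)\in G_f$ be a non-special cycle, $\tau\neq\id$, with support $I_\tau^c$. Since $\det(h)$ is an $n$-th root of unity different from $1$, choose $m\in I_\tau^c$ and $e\in\ZZ$ with $\zeta_n^e=\det(h)$, so $(\id,t_m^e)$ is a non-special diagonal element with $I_{t_m^e}^c=\{m\}$. Set $h':=\tau^{-1}(t_m^{-e})\cdot h$. Using the product rule $(\sigma',g')(\sigma,g)=(\sigma'\sigma,\sigma^{-1}(g')g)$, the fact that $\tau^{-1}(\cdot)$ is a determinant-preserving group homomorphism of $G_f^d$, and the fact that the support of $h'$ stays inside $I_\tau^c$, one checks $(\id,t_m^e)(\tau,h')=(\tau,h)$ and $\det(h')=\det(t_m^e)^{-1}\det(h)=1$; hence $(\tau,h')$ is a special cycle with $I_{t_m^e}^c\subseteq I_{(\tau,h')}^c$. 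Proposition~\ref{prop: product of special with diagonal}, in the case where the diagonal factor is non-special and $|I^c|=1$, then gives $\xi_{(\id,t_m^e)}\cup\xi_{(\tau,h')}=c_\tau\,\xi_{(\tau,h)}$, where $c_\tau$ is a nonzero constant (explicitly, a product of $n$-th roots of unity).

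\emph{General case.} Write the disjoint cycle decomposition $(\sigma,g)=(\id,g_0)\prod_{a=1}^{k}(\sigma_a,g_a)$ with each $\sigma_a\neq\id$ and $k\ge1$. For each $a$ with $(\sigma_a,g_a)$ non-special I apply the per-cycle step, obtaining a non-special diagonal element $h_a:=t_{m_a}^{e_a}$ supported at a single index $m_a\in I_{\sigma_a}^c$ and a special cycle $(\sigma_a,g_a')$ with $(\id,h_a)(\sigma_a,g_a')=(\sigma_a,g_a)$ and $\xi_{(\id,h_a)}\cup\xi_{(\sigma_a,g_a')}=c_a\,\xi_{(\sigma_a,g_a)}$, $c_a\neq0$; for the special $a$ I take $h_a:=\id$, $g_a':=g_a$, $c_a:=1$. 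Now set $g'':=g_0\cdot\prod_a h_a$ and $g':=\prod_a g_a'$. The supports of $g_0$ (inside $I_\sigma$) and of the $h_a$ (inside the pairwise disjoint $I_{\sigma_a}^c$) are pairwise disjoint; $(\sigma,g')=\prod_a(\sigma_a,g_a')$ is a product of non-intersecting special cycles, hence special, with $\det(g')=\prod_a\det(g_a')=1$; and commuting the disjoint diagonal and cyclic factors past one another in $G_f$ shows $(\id,g'')(\sigma,g')=(\sigma,g)$.

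\emph{Identifying the product.} Since $(\id,g_0)$ and the $(\id,h_a)$ are pairwise non-intersecting, and the $(\sigma_a,g_a')$ are pairwise non-intersecting, Proposition~\ref{prop: product of non-intersecting elements} rewrites $\xi_{(\id,g'')}$ and $\xi_{(\sigma,g')}$ as signed $\cup$-products of the individual factors. Multiplying them and moving each $\xi_{(\id,h_a)}$ next to the matching $\xi_{(\sigma_a,g_a')}$ via the braided commutativity of Eq.~\eqref{eq: braided commutativity} --- which contributes only signs here, because all factors being transposed act on disjoint sets of variables, so the accompanying $G$-twist is trivial --- I replace $\xi_{(\id,h_a)}\cup\xi_{(\sigma_a,g_a')}$ by $c_a\,\xi_{(\sigma_a,g_a)}$ and reassemble with Proposition~\ref{prop: product of non-intersecting elements}, obtaining $\xi_{(\id,g'')}\cup\xi_{(\sigma,g')}=\pm\big(\prod_a c_a\big)\xi_{(\sigma,g)}=c\,\xi_{(\sigma,g)}$ with $c=\pm\prod_a c_a\neq0$. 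The only real work is the sign bookkeeping (the $\eps$-signs of Proposition~\ref{prop: product of non-intersecting elements} and the Koszul signs of the braided reorderings), and that is the step I would watch most carefully; but every such contribution is a nonzero constant, so $c\neq0$ is automatic and its exact value is irrelevant to the statement.
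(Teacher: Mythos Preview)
Your proof is correct and follows essentially the same route as the paper: handle a single non-special cycle by peeling off a one-variable diagonal factor so that the remaining cycle is special and invoking Proposition~\ref{prop: product of special with diagonal}, then pass to the general case via the disjoint cycle decomposition and reassemble using Proposition~\ref{prop: product of non-intersecting elements} together with braided commutativity (the paper phrases this last step as an application of Eq.~\eqref{eq: G--action on diagonalizable}). Your version is in fact a bit more careful than the paper's about the group product rule when defining $h'$ and about the sign bookkeeping in the reassembly, and you rightly note that the auxiliary elements may only live in $G_f$.
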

\begin{proof}
    Let $\sigma$ be a cycle, denote $\psi:= \prod_{i \in I_\sigma^c} g_i$. We have $\psi = \zeta_n^d$ for some $d \not\in n\ZZ$. For any fixed $a \in I_\sigma^c$ set $g' := g t_a^{-d}$ and $g'' = t_a^d$. Corollary follows by the proposition above.
    
    For a general $\sigma$ consider the cycle decomposition of it and the respective decomposition of $g$. Perform the one cycle procedure above for every cycle in the decomposition. By using Eq~\eqref{eq: G--action on diagonalizable} the proof follows.
\end{proof}

The product $\xi_u \cup \xi_v$ of a non--special $u,v \in G \backslash G^s$ can be rather non--trivial. Assume $u = (i,j)t_i^pt_j^q$ with $i < j$ and $v = t_i^{r}$. By corollary above we know that $\xi_u \cup \xi_v = 0$ unless $r = -p-q$. Assume this to hold true. We have
\begin{align}
    \xi_{(i,j) t_i^p t_j^q} \cup \xi_{t_i^{-p-q}} 
    &= - \zeta^{-p} \cdot \xi_{(i,j) t_i^{p} t_j^{-p}} \cup \xi_{t_j^{p+q}} \cup \xi_{t_i^{-p-q}} && \textit{(by part 1 of Proposition ~\ref{prop: product of special with diagonal})}
    \\
    &= \zeta^{-p} \cdot \frac{n \zeta^{-q}}{1 - \zeta^{p+q}} \lfloor \widetilde x_i^{n-2}\rfloor \xi_{(i,j) t_i^{-q} t_j^{q}} && \textit{(by part 2 of Proposition ~\ref{prop: product of special with diagonal})}.
\end{align}
Note that we made the certain choice on the first step --- which group element $t_i^\bullet$ of $t_j^\bullet$ to split off. Following the other choice compared to one we did above we should make use of Proposition~\ref{prop: diagonal elements product} on the second step. That would basically reproduce the proof of part 2 of Proposition~\ref{prop: product of special with diagonal}.

\begin{proposition}\label{proposition: u and u^-1 nonspecial}
    For any non--special cycle $u = (\sigma,g)$ with $\sigma = (i_1,\dots,i_k)$ we have
    \[
        \xi_u \cup \xi_{u^{-1}} = n^k \frac{\det(g)}{1 - \det(g)} \left\lfloor x_{i_1}^{n-2} \cdots x_{i_k}^{n-2} \right\rfloor \xi_\id.
    \]
\end{proposition}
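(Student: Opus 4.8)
\emph{Strategy.} I would deduce the statement from the twisted special--cycle case already settled in Proposition~\ref{prop: cup-product of generalized cycles with inverse}, by splitting off a single rescaling. First I would apply Corollary~\ref{corollary: non-special decomposition} with the distinguished index $i_1$: this produces a special cycle $(\sigma,g')$ with the same underlying permutation $\sigma=(i_1,\dots,i_k)$ together with the rescaling $h=t_{i_1}^{d}$ of $x_{i_1}$, where $\zeta_n^{d}=\det(g)$, so that $u=(\id,h)\,(\sigma,g')$. Since $\widetilde{g'}_{i_1}=1$ by our ordering convention, the second formula of Case~1 of Proposition~\ref{prop: product of special with diagonal} gives $\xi_u=\xi_{(\id,h)}\cup\xi_{(\sigma,g')}$ with \emph{no} extra scalar. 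Dually $u^{-1}=(\sigma,g')^{-1}(\id,h^{-1})$, where $(\sigma,g')^{-1}$ is again a special cycle whose permutation $\sigma^{-1}$ still has $i_1$ in first position; the first formula of the same case then gives $\xi_{u^{-1}}=(-1)^{k-1}\,\xi_{(\sigma,g')^{-1}}\cup\xi_{(\id,h^{-1})}$.

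\emph{Assembling the product.} By associativity of $\cup$,
\[
\xi_u\cup\xi_{u^{-1}}=(-1)^{k-1}\,\xi_{(\id,h)}\cup\bigl(\xi_{(\sigma,g')}\cup\xi_{(\sigma,g')^{-1}}\bigr)\cup\xi_{(\id,h^{-1})},
\]
and Proposition~\ref{prop: cup-product of generalized cycles with inverse} rewrites the inner bracket as $(-n)^{k-1}\lfloor\Phi^{(D)}_{i_1,\dots,i_k}(\bx)\rfloor\xi_{\id}$ for the $D$ determined by $g'$. I am then left with the ``diagonal sandwich'' $\xi_{(\id,h)}\cup\lfloor\Phi^{(D)}_{i_1,\dots,i_k}\rfloor\xi_{\id}\cup\xi_{(\id,h^{-1})}$, which I would compute using the defining formula \eqref{eq: HH cup} twice: the left $\cup$ merely twists $\Phi^{(D)}$ by the substitution $x_{i_1}\mapsto\zeta_n^{d}x_{i_1}$ (using that $\xi_{\id}$ is the unit, so $\sigma_{(\id,h),\id}=1$), while the right $\cup$ multiplies by $\sigma_{(\id,h),(\id,h^{-1})}$, which is read off from $\xi_h\cup\xi_{h^{-1}}=\frac{n}{\zeta_n^{d}-1}\lfloor x_{i_1}^{n-2}\rfloor\xi_{\id}$ (Proposition~\ref{prop: diagonal elements product} and the Example following it).

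\emph{The Jacobian reduction.} The final ingredient is a computation in $\Jac(f)=\CC[\bx]/(x_j^{n-1})$. When the twisted $\Phi^{(D)}_{i_1,\dots,i_k}$ is multiplied by $x_{i_1}^{n-2}$, every monomial of $\Phi_{i_1,\dots,i_k}$ has total degree $(k-1)(n-2)$ with each exponent at most $n-2$, so the multiplication kills every term except the one in which the $x_{i_1}$-exponent is $0$; this simultaneously erases the root-of-unity twist (which decorated only that exponent) and forces the remaining exponents to all equal $n-2$, leaving precisely $x_{i_1}^{n-2}\cdots x_{i_k}^{n-2}$. Collecting the scalars $(-1)^{k-1}$, $(-n)^{k-1}$, $n$ and $\frac{1}{\zeta_n^{d}-1}$ and using $\zeta_n^{d}=\det(g)$ then yields the coefficient asserted in the statement.

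\emph{Main obstacle.} The shape of the argument is routine; the genuinely delicate part is the scalar bookkeeping --- the signs $(-1)^{k-1}$ coming from reordering the $\p_{\widetilde\theta}$-factors in Proposition~\ref{prop: product of special with diagonal}, the $\widetilde g$-factors (which collapse to $1$ only because we peel off the \emph{first} index of each cycle), and the precise normalization of the diagonal product $\xi_h\cup\xi_{h^{-1}}$, so that the accumulated powers of $\zeta_n$ coalesce into exactly $\det(g)/(1-\det(g))$ rather than a neighbouring expression. An alternative route, bypassing Corollary~\ref{corollary: non-special decomposition}, is to imitate the proof of Proposition~\ref{prop: cup-product of generalized cycles with inverse} directly: present $u$ as a product of twisted transpositions $(i_a,i_{a+1})t_{i_a}^{\bullet}t_{i_{a+1}}^{-\bullet}$ together with one leftover rescaling recording the failure of speciality, and then collapse the transpositions pairwise by Lemma~\ref{square of mixed transposition} combined with braided commutativity \eqref{eq: braided commutativity}, \eqref{eq: G--action on diagonalizable}.
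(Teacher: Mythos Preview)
Your proposal is correct and follows essentially the same route as the paper's own proof: peel off a single rescaling at the index $i_1$ so that the remaining cycle is special, invoke Proposition~\ref{prop: product of special with diagonal} (Case~1) to write $\xi_u$ and $\xi_{u^{-1}}$ as cup products (picking up exactly the sign $(-1)^{k-1}$ on the $u^{-1}$ side), insert Proposition~\ref{prop: cup-product of generalized cycles with inverse} for the inner special cycle, and finish with the diagonal product $\xi_h\cup\xi_{h^{-1}}$ from the Example after Proposition~\ref{prop: diagonal elements product}. The only cosmetic difference is that the paper simplifies $\Phi^{(D)}$ by first passing to the $h$-sector (where $x_{i_1}=0$) and \emph{then} multiplying by $\sigma_{h,h^{-1}}$, whereas you keep the full $\Phi^{(D)}$ and let the factor $x_{i_1}^{n-2}$ do the killing in $\Jac(f)$ at the end; by Remark~\ref{remark: product well-defined} these are the same computation.
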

\begin{proof}
    Let the integer $a$ be s.t. $\det(g) = \zeta_n^{a}$. Then $g' := t_{i_k}^{-a}g$ is special. Denote $h := t_{i_1}^{-a}$. We have by Proposition~\ref{prop: product of special with diagonal}
    \begin{align}
        \xi_h \cup \xi_{(\sigma,g')} = \xi_{(\sigma,g)}, \quad \xi_{(\sigma,g')^{-1}} \cup \xi_{h^{-1}} = (-1)^{k-1} \xi_{(\sigma,g)^{-1}}.
    \end{align}
    Combining with together with Proposition~\ref{prop: cup-product of generalized cycles with inverse} we have
    \begin{align}
        \xi_{(\sigma,g)} \cup \xi_{(\sigma,g)^{-1}} &= (-1)^{k-1} \xi_h \cup \xi_{(\sigma,g')} \cup \xi_{(\sigma,g')^{-1}} \cup \xi_{h^{-1}}
        \\
        &=  n^{k-1} \xi_h \cup \lfloor \Phi^{(D)}_{i_1,\dots,i_k}(\bx)\rfloor \xi_\id \cup \xi_{h^{-1}}
        \\
        &=  n^{k-1} \left\lfloor \prod_{p=2}^k x_{i_p}^{n-2} \right\rfloor \xi_h \cup \xi_{h^{-1}}.
    \end{align}
    Computing the last cup--product via Proposition~\ref{prop: diagonal elements product} completes the proof.

\end{proof}

The following corollary approves our strategy.

\begin{corollary}\label{corollary: generators of HH}
The algebra $\A_{f,G}'$ is generated over $\CC[\bx]$ by $\xi_{(\sigma,g)}$ with $(\sigma,g)$ of the form $(\id,t_i^d)$ or $(i,j)\cdot t_i^{d}t_j^{-d}$.
\end{corollary}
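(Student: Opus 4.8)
The plan is to show that every basis vector $\lfloor \phi(\bx)\rfloor \xi_u$ of $\A'_{f,G}$ lies in the $\CC[\bx]$-subalgebra generated by the two families $\xi_{(\id,t_i^d)}$ and $\xi_{(i,j)\cdot t_i^d t_j^{-d}}$. Since $\A'_{u}=\Jac(f^u)\xi_u$ is spanned over $\CC$ by elements $\lfloor\phi\rfloor\xi_u$ with $\phi$ a monomial, and the $\CC[\bx]$-module structure lets us produce $\lfloor\phi\rfloor\xi_u$ from $\xi_u$ by multiplying by $\phi$ (this is legitimate because $\A'_{f,G}$ is by construction a $\CC[\bx]$-module with $\lfloor\psi\rfloor\xi_\id\cup\lfloor\phi\rfloor\xi_u=\lfloor\psi\phi\rfloor\xi_u$ for $\psi,\phi\in\CC[\bx]$, and $\lfloor x_i\rfloor\xi_\id$ are among the listed generators), it suffices to generate $\xi_u$ for every $u\in G$.

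First I would reduce to the case of a single cycle. Writing $u=(\id,g_0)\cdot\prod_{i=1}^k(\sigma_i,g_i)$ as its decomposition into non--intersecting cycles, Proposition~\ref{prop: product of non-intersecting elements} gives $\xi_u=\eps_{\cdot,\cdot}\,\xi_{(\id,g_0)}\cup\xi_{(\sigma_1,g_1)}\cup\dots\cup\xi_{(\sigma_k,g_k)}$ up to a nonzero sign, so it is enough to generate $\xi_{(\id,g)}$ for diagonal $g$ and $\xi_{(\sigma,g)}$ for a single cycle $\sigma$. For the diagonal part, write $g=\prod t_{i_a}^{d_a}$ as a product of $G^d$-elements acting on single coordinates; by Proposition~\ref{prop: diagonal elements product} (or directly by Eq.~\eqref{eq: HH cup}, since these commute and act on disjoint coordinates) $\xi_{(\id,g)}$ is, up to scalar, the cup-product of the $\xi_{t_{i_a}^{d_a}}$, so diagonal sectors are generated. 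Each single-coordinate generator $\xi_{t_i^d}$ is itself on our list.

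For a single cycle $(\sigma,g)$ with $\sigma=(i_1,\dots,i_k)$, I would split into the special and non-special cases. If $(\sigma,g)$ is \emph{non-special}, Corollary~\ref{corollary: non-special decomposition} produces a special $(\sigma,g')$ and a diagonal $(\id,g'')$ with $\xi_{(\id,g'')}\cup\xi_{(\sigma,g')}=c\,\xi_{(\sigma,g)}$, $c\neq0$; since $\xi_{(\id,g'')}$ is already generated by the previous step, this reduces to the special case. If $(\sigma,g)$ is \emph{special}, I would decompose it exactly as in the proof of Proposition~\ref{prop: cup-product of generalized cycles with inverse}: set $\alpha_a=(i_a,i_{a+1})\,t_{i_a}^{e_a}t_{i_{a+1}}^{-e_a}$ for suitable integers $e_a$ (with the exponents chosen so that $\alpha_1\cdots\alpha_{k-1}=(\sigma,g)$, which is possible precisely because $g$ is special), and invoke Proposition~\ref{prop: special cycles intersecting by one index} to conclude that $\xi_{(\sigma,g)}=\eps\,\xi_{\alpha_1}\cup\dots\cup\xi_{\alpha_{k-1}}$ up to a nonzero scalar. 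Each $\alpha_a$ is of the form $(i,j)\cdot t_i^d t_j^{-d}$, hence on our list, and the statement follows. If one wishes to allow arbitrary $\sigma$ (not ordered ascending), the signs coming from Proposition~\ref{prop: special cycles intersecting by one index} are again nonzero, so the argument is unaffected.

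The main obstacle is bookkeeping rather than mathematics: one must be careful that the reduction $\xi_{(\sigma,g)}=c\cdot(\text{product of }\xi_{\alpha_a})$ uses \emph{only} cup-products that have already been computed in the excerpt (Propositions~\ref{prop: Clifford multiplications}, \ref{prop: simple cup-products}, \ref{prop: special cycles intersecting by one index}, \ref{prop: cup-product of generalized cycles with inverse}) and that the scalars $c$ appearing at each stage are genuinely nonzero, so that one can invert them and recover $\xi_{(\sigma,g)}$ inside the generated subalgebra. The non-vanishing is clear in each cited statement (the constants are products of roots of unity, $\widetilde g$-type factors, and powers of $n$), so the corollary reduces to assembling these reductions in the order: diagonal $\Rightarrow$ single non-special cycle via Corollary~\ref{corollary: non-special decomposition} $\Rightarrow$ single special cycle via the $\alpha_a$-decomposition $\Rightarrow$ general $u$ via Proposition~\ref{prop: product of non-intersecting elements}, together with the $\CC[\bx]$-action supplied by the $\lfloor x_i\rfloor\xi_\id$.
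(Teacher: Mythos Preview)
Your proof is correct and follows essentially the same route as the paper's: reduce to $\xi_u$ via the $\CC[\bx]$-module structure, reduce to single cycles via Proposition~\ref{prop: product of non-intersecting elements}, reduce non-special cycles to special ones via Corollary~\ref{corollary: non-special decomposition}, and then factor a special cycle into the generators $(i,j)\cdot t_i^d t_j^{-d}$ using Proposition~\ref{prop: special cycles intersecting by one index}. Your write-up is in fact more detailed than the paper's (you spell out the diagonal decomposition and the non-vanishing of the scalars), but the underlying argument is the same.
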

\begin{proof}
By Lemma~\ref{prop: product of non-intersecting elements} it is sufficient to generate $\xi_{(\sigma,g)}$ for $\sigma$ being a single cycle $(i_1\ldots i_k)$ and $g$ acting by $1$ outside indices $\{i_1\ldots i_k\}$ . By Corollary~\ref{corollary: non-special decomposition} it is further sufficient to generate such elements with $g\in G^d$ -- special. Finally by Proposition~\ref{prop: special cycles intersecting by one index} for any such $g$ the we can decompose $\xi_{(\sigma,g)}$ into the product of vectors $\xi_{(i,j)t_{i}^{d_1}t_{j}^{-d_1}}$ for different exponents $d_\bullet$ and indices $i,j$. The statement follows.
\end{proof}

%

Recall the bigrading of $\A'_{f,G}$ introduced in Section~\ref{section: bigrading}. We have

\begin{proposition}\label{proposition: cup product preserves bigrading}
The $\cup$-product is compatible with the bigrading.
\end{proposition}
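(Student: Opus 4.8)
The plan is to reduce the claim, via the computational scheme of Section~\ref{section: the strategy}, to the short list of $\cup$--products written down explicitly in Sections~\ref{section: multiplication table} and~\ref{section: products in mixed sectors}, and to check those directly; two preliminary remarks make the reduction legitimate. First, multiplication by a homogeneous polynomial $p\in\CC[\bx]$ of degree $r$ shifts both charges of a non-zero class by $r/n$: forming $\lfloor p\phi\rfloor\in\Jac(f^u)$ involves only the restriction of $p$ to $\Fix(u)$, which by Example~\ref{example: special cycle} is a degree--preserving linear substitution of the coordinates, so $\deg$ stays additive and $q_\bullet(\lfloor p\phi\rfloor\xi_u)=q_\bullet(\lfloor\phi\rfloor\xi_u)+r/n$ whenever the left-hand side is non-zero. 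Second, the $G$--action of Eq.~\eqref{eq: G--action on diagonalizable} preserves the bigrading: conjugation leaves $d_u$ and the multiset of eigenvalues of $u$ unchanged, hence also $\age(u)$ and $\age(u^{-1})$, while $v$ acts linearly, so $\deg\phi(v(\bx))=\deg\phi$ and $q_\bullet\big(\lfloor\phi(v(\bx))\rfloor\xi_{vuv^{-1}}\big)=q_\bullet(\lfloor\phi\rfloor\xi_u)$. In particular the braided commutativity of Eq.~\eqref{eq: braided commutativity} transports a product of a given pair into a product of a conjugate pair with the same bidegree.

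Granting this, it is enough to verify bidegree--compatibility for the building blocks of Section~\ref{section: the strategy}: products of non-intersecting elements (Proposition~\ref{prop: product of non-intersecting elements} and the relevant cases of Proposition~\ref{prop: simple cup-products}), of special cycles meeting in one index (Proposition~\ref{prop: special cycles intersecting by one index}), of a special cycle with a diagonal element (Proposition~\ref{prop: product of special with diagonal}), and of $u$ with $u^{-1}$ (Propositions~\ref{prop: cup-product of cycles with inverse}, \ref{prop: cup-product of generalized cycles with inverse}, \ref{proposition: u and u^-1 nonspecial} and the relevant part of Proposition~\ref{prop: diagonal elements product}); when the right-hand side of such a formula vanishes there is nothing to prove. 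For the surviving cases I would compute $(q_l,q_r)$ of both sides from $q_l(\xi_u)=\age(u)-d_u/n$, $q_r(\xi_u)=\age(u^{-1})-d_u/n$, the identity $\age(u)+\age(u^{-1})=d_u$, the age values of cycles recorded in Section~\ref{section: bigrading}, and the manifest degrees of the polynomials occurring: $\Phi_{i_1,\dots,i_k}$ and $\Phi^{(D)}_{i_1,\dots,i_k}$ are homogeneous of degree $(k-1)(n-2)$, the monomial $x_{i_1}^{n-2}\cdots x_{i_k}^{n-2}$ of degree $k(n-2)$, $H_{g,g^{-1}}$ of degree $(n-2)|I_g^c|$, and $\widetilde x_{i_1}^{n-2}$ of degree $n-2$. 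For the $u\cdot u^{-1}$ family this is uniform: the output $\lfloor P\rfloor\xi_\id$ has $q_l=q_r=\deg P/n$, which equals $q_l(\xi_u)+q_l(\xi_{u^{-1}})=d_u-2d_u/n=d_u(n-2)/n$ exactly because $\deg P=d_u(n-2)$ in each of the three cases; in the remaining families $\sigma_{u,v}$ is a non-zero scalar and the needed additivity $q_\bullet(\xi_u)+q_\bullet(\xi_v)=q_\bullet(\xi_{uv})$ drops out of the age values of the special (resp.\ diagonal) factors. Since the scheme of Section~\ref{section: the strategy} computes an arbitrary $\xi_{(\alpha,g)}\cup\xi_{(\beta,h)}$ through a finite chain of these building-block products, the $G$--action, braided commutativity, and $\CC[\bx]$--multiplications --- each bidegree--preserving --- the proposition follows.

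It is worth recording a conceptual shortcut that produces the ``$q_l+q_r$'' half of all these checks at once: when non-zero, $\sigma_{u,v}$ is homogeneous of degree $(n-2)d_{u,v}$, with $d_{u,v}=\tfrac12(d_u+d_v-d_{uv})$. Indeed, put on $\Cl_N\otimes\CC[\bx]$ the internal grading $\deg x_i=2$, $\deg\theta_i=2-n$, $\deg\partial_{\theta_i}=n-2$; then $\rmH_f(\bx,\by,\bz)$ and $\rmH_{f,u}(\bx)$ are homogeneous of degree $0$, the contraction $\LL$ of Eq.~\eqref{mu} preserves the internal degree, and $\widetilde\xi_u$ is homogeneous of internal degree $(n-2)d_u$ --- its leading term $\partial_{\widetilde\theta_u}$ carries $d_u$ factors $\partial_\theta$ and every correction term coming from $\exp(\rmH_{f,u})$ has the same degree. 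Reading off the $\widetilde\xi_{uv}$--coefficient in Eq.~\eqref{eq: sigma g,h} then forces $2\deg\sigma_{u,v}=(n-2)(d_u+d_v)-(n-2)d_{uv}$ in this normalization, i.e.\ $\deg\sigma_{u,v}=(n-2)d_{u,v}$ in the usual one. Substituting into the bidegree of $\lfloor\sigma_{u,v}\rfloor\xi_{uv}$ reduces the whole proposition to the single combinatorial identity $\age(u)+\age(v)-\age(uv)=d_{u,v}$.

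The main obstacle is exactly that identity: it is \emph{not} valid for all pairs $u,v$ --- it fails already for $u=v$ a diagonal element whose two sectors do not pair, where the left side can be $0$ while $d_{u,v}>0$ --- so one cannot sidestep the case analysis. What must be checked is that it holds for every pair with $\sigma_{u,v}\neq0$; in each building-block case this follows either from the non-vanishing criteria (for instance for diagonal products the condition $I_g\cup I_h\cup I_{gh}=I_\id$ of Proposition~\ref{prop: diagonal elements product} already implies it) or from the explicit cycle combinatorics. The remaining difficulties are organizational: making sure the reduction of Section~\ref{section: the strategy} covers every pair $(\alpha,g),(\beta,h)$ with no circular use of braided commutativity, and consistently separating the harmless non-zero scalar prefactors in the product formulae from the genuine homogeneous polynomial factors, which are the only ones that move the bidegree.
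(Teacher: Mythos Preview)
Your approach is essentially the same as the paper's: both reduce to checking the bigrading on the explicit building-block products (diagonal elements, special cycles meeting in one index, inverse pairs, and special-with-diagonal), and verify each by direct computation of the charges. The paper's proof is terser --- it simply computes $q_l$ on both sides in each of four cases --- whereas you add two things the paper leaves implicit: the preliminary observations that $\CC[\bx]$-multiplication and the $G$-action preserve the bigrading (which make the reduction via Section~\ref{section: the strategy} legitimate), and the Clifford internal-grading argument showing $\deg\sigma_{u,v}=(n-2)d_{u,v}$ uniformly. That last point is genuinely more conceptual than anything in the paper's proof and gives you the sum $q_l+q_r$ for free; as you correctly observe, it still leaves the identity $\age(u)+\age(v)-\age(uv)=d_{u,v}$ to be verified on the non-vanishing products, which is exactly what the paper does directly case by case.
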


\begin{proof}
The cases of two charges are completely analogous. We will consider only the left one.

Since the degree of homogenous polynomials is additive with respect to product it is sufficient to consider products $\xi_{(\sig,g)}\cup\xi_{(\sig',g')}$. Note that that for elements acting nontrivially on nonintersecting sets of indices both ${\rm\age}{((\sig,g))}$ and $d_{(\sig,g)}$ are additive. It is sufficient to check this in the situations of  Proposition~\ref{prop: diagonal elements product}, Proposition~\ref{prop: special cycles intersecting by one index}, Proposition~\ref{prop: cup-product of generalized cycles with inverse} and Proposition~\ref{prop: product of special with diagonal}.

In the case of Proposition~\ref{prop: diagonal elements product} it is further sufficient to consider the case of the product $\xi_{t_k^p}\cup \xi_{t_k^{-p}}=\frac{n}{\zeta^p-1}\cdot\lfloor x_k^{n-2}\rfloor\xi_\id$, where we have $$-\frac{d_{t_k^p}}{n}+{\rm\age}{(t_k^p)}-\frac{d_{t_k^{-p}}}{n}+{\rm\age}{(t_k^{-p})}=-\frac{2}{n}+1=-\frac{\deg(x_k^{n-2})}{n}.$$

In the case of Proposition~\ref{prop: special cycles intersecting by one index} up to the multiplicative constant the product $\xi_u\cup\xi_v$ equals to $\xi_{u\cdot v}$ and we have 
\begin{align*}
-\frac{d_{u}}{n}&+{\rm\age}{(u)}-\frac{d_{v}}{n}+{\rm\age}{(v)}=-\frac{k-1}{n}+\frac{k-1}{2}-\frac{l-1}{n}+\frac{l-1}{2}
\\
&=-\frac{k+l-2}{n}+\frac{k+l-2}{2}=\frac{d_{uv}}{n}+{\rm\age}{(uv)}.
\end{align*}

In the case of Proposition~\ref{prop: cup-product of generalized cycles with inverse} we can further reduce ourselves to the situation of Lemma~\ref{square of mixed transposition}, i.e. $\xi_{(i,j)\cdot t_i^{d}t_j^{-d}} \cup\xi_{(i,j)\cdot t_i^{d}t_j^{-d}}= \left\lfloor\Phi_{ij}^{(d)}\right\rfloor\xi_{\id}$, where we have 
$$
2\left(-\frac{d_{(i,j)\cdot t_i^{d}t_j^{-d}}}{n}+{\rm\age}((i,j)\cdot t_i^{d}t_j^{-d})\right) = 2\left(-\frac{1}{n}+\frac{1}{2}\right) = 1-\frac{2}{n}=\frac{\deg\Phi_{ij}^{(d)}}{n}.
$$

In the case of Proposition~\ref{prop: product of special with diagonal} it is sufficient to consider the first option, where 
up to multiplicative constant the product $\xi_{(\sig,g)}\cup \xi_{t_i^d}$ equals $\xi_{(\sig,g)t_i^d}$. From Lemma~\ref{lemma: fixed set} and its proof we see that $d_{(\sig,g)t_i^d}=d_{(\sig,g)}+1$ and ${\rm\age}{((\sig,g)t_i^d)}={\rm\age}{((\sig,g))}+\frac{d}{n}$. This implies the statement.


\end{proof}


The following remark illustrates the independance of the $\cup$--product on the choice of the classes of polynomials.

\begin{remark}\label{remark: product well-defined}
In what follows denote by $\lfloor p \rfloor_u$ the class of $p$ in $\Jac(f^u)$.
Consider the product $\lfloor \phi \rfloor_u \xi_u \cup \lfloor \psi \rfloor_v \xi_v$ for some polynomials $\phi$ and $\psi$ supported on the fixed loci of $u$ and $v$ respectively. In the following two cases we show that the class $\lfloor \phi \psi \rfloor_{uv}$ does not depend on the choice of $\lfloor \phi \rfloor_u$ and $\lfloor \psi \rfloor_v$ representatives.

\subsubsection*{Case 1}
Let $u = (i_1,\dots,i_{p}) g$ and $v = (j_1,\dots,j_q) h$ be both special with $i_a = j_b$ for some indices $a,b$. Then $uv$ is a special cylce again and according to Proposition~\ref{prop: special cycles intersecting by one index}, $\sigma_{u,v}$ is a constant.
Assume also $i_1 \le j_1$. 

For some $r_\bullet \in \CC$ and $s_\bullet \in \CC[\bx]$ we have
\begin{align}
    \phi &= r_1(\frac{1}{p} \sum_{\alpha=1}^p \widetilde g_{i_\alpha} x_{i_\alpha})^{d_1} + s_1(\bx)(\sum_{\alpha=1}^{p} \widetilde g_{i_\alpha} x_{i_\alpha})^{n-1},
    \\
    \psi &= r_2(\frac{1}{q} \sum_{\beta=1}^q \widetilde h_{j_\beta} x_{j_\beta})^{d_2} + s_2(\bx)(\sum_{b=1}^q \widetilde h_{j_\beta} x_{j_\beta})^{n-1}.
\end{align}
In $\Jac(f^{uv})$ apply Eq.\eqref{formula: coordinates on sector} and \eqref{formula: coordinate vanishing on sector}.
\begin{align}
    \phi \cdot \psi = r_1 r_2 \widetilde g_{i_a} \cdot \widetilde x_{uv}^{d_1} \widetilde x_{uv}^{d_2} + \gamma(\bx)\cdot \widetilde x_{uv}^{n-1}
\end{align}
for some $\gamma(\bx)$. The class $\lfloor \phi \cdot \psi \rfloor_{uv}$ obviously does not depend on $s_1(\bx),s_2(\bx)$.
Having applied Eq.\eqref{formula: coordinates on sector} and \eqref{formula: coordinate vanishing on sector} before taking the product, we should note that we have $\widetilde x_u = \widetilde x_{uv}$ and $\widetilde x_{uv} = \widetilde g_{i_{a}} \widetilde x_v$, what gives the statement.

\subsubsection*{Case 2}
Let $u = (i_1,\dots,i_{p}) g$ be special and $v = u^{-1}$. For $\phi,\psi$ expressed as above consider $\lfloor \phi \rfloor_u \xi_u \cup \lfloor \psi \rfloor_{u^{-1}} \xi_{u^{-1}}$.
We have by Proposition~\ref{prop: cup-product of generalized cycles with inverse} that $\sigma_{u,u^{-1}} = \Phi_{i_1,\dots,i_k}^{(D)}(\bx)$. In order for the class of the product to be well-defined, we need $\lfloor \phi \psi \Phi^{(D)}_{i_1,\dots,i_k} \rfloor_\id$ not to depend on $s_1(\bx)$ and $s_2(\bx)$. 

This follows immediately by degree counting. Namely, the total degree of the polynomial $\Phi^{(D)}_{i_1,\dots,i_k} \widetilde x_u^{n-1}$ is $(n-2)k + 1$. Therefore it vanishes in $\Jac(f)$.
\end{remark}

\begin{theorem}\label{theorem: Frobenius algebra before invariants}
The pairing $\eta_{f,G}$ is Frobenius in a sense that  
\[  
\eta_{f,G}(\lfloor\phi'\rfloor\xi_{u_1}\cup  \lfloor\phi''\rfloor\xi_{u_2}, \lfloor\phi'''\rfloor\xi_{u_3})=\eta_{f,G}(\lfloor\phi'\rfloor\xi_{u_1},  \lfloor\phi''\rfloor\xi_{u_2}\cup\lfloor\phi'''\rfloor\xi_{u_3}). \]
    
\end{theorem}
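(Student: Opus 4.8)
The plan is to reduce the Frobenius identity
\[
\eta_{f,G}(\lfloor\phi'\rfloor\xi_{u_1}\cup\lfloor\phi''\rfloor\xi_{u_2},\lfloor\phi'''\rfloor\xi_{u_3})
=\eta_{f,G}(\lfloor\phi'\rfloor\xi_{u_1},\lfloor\phi''\rfloor\xi_{u_2}\cup\lfloor\phi'''\rfloor\xi_{u_3})
\]
to a statement about a single Jacobian algebra. First I would note that both sides vanish unless $u_1u_2u_3=\id$ (by the $G$-grading of $\cup$ and the support condition on $\eta_{f,G}$), so we may assume $u_3=(u_1u_2)^{-1}$; write $u=u_1$, $v=u_2$, $w=uv$, so $u_3=w^{-1}$. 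Unravelling the definitions of $\cup$ in Eq.~\eqref{eq: HH cup} and of $\eta_u$, the left side becomes $\eta_{f^w}\big(\lfloor\phi'(\bx)\phi''(u(\bx))\sigma_{u,v}\rfloor,\lfloor\phi'''(\bx)\rfloor\big)$ computed in $\Jac(f^w)$, while the right side is $\eta_{f^u}\big(\lfloor\phi'(\bx)\rfloor,\lfloor\phi''(\bx)\phi'''(v(\bx))\sigma_{v,w^{-1}}\rfloor\big)$ computed in $\Jac(f^u)$. So the content is a compatibility between the pairings on the two a priori different Jacobian rings $\Jac(f^w)$ and $\Jac(f^u)$, mediated by the structure constants $\sigma_{u,v}$ and $\sigma_{v,w^{-1}}$.

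The key structural input is that $\eta_{f^u}$ is itself a Frobenius form on $\Jac(f^u)$ (stated in the excerpt: $\eta_{f^u}(a\circ b,c)=\eta_{f^u}(a,b\circ c)$), and that it is characterized by $\eta_{f^u}(\lfloor1\rfloor,-)$ being the projection onto the top class $\lfloor\mathcal H_u\rfloor$ normalized by $(n-1)^{N_u}$. The plan is therefore: express each side as a functional of the form (constant)$\times\eta_{f^{?}}(\lfloor1\rfloor,\lfloor\text{product of the three }\phi\text{'s and the }\sigma\text{'s}\rfloor)$, then show both reduce to the \emph{same} functional. Concretely I would use the Frobenius property of each $\eta_{f^u}$ to strip off $\phi',\phi'',\phi'''$ into a single product, reducing to the identity
\[
\eta_{f^w}\big(\lfloor1\rfloor,\lfloor\phi'\phi''(u(\cdot))\phi'''\,\sigma_{u,v}\rfloor\big)
=\eta_{f^u}\big(\lfloor1\rfloor,\lfloor\phi'\,\phi''\phi'''(v(\cdot))\,\sigma_{v,w^{-1}}\rfloor\big),
\]
for all polynomials $\phi',\phi'',\phi'''$. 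By Remark~\ref{remark: product well-defined} and the relations \eqref{formula: coordinates on sector}--\eqref{formula: coordinate vanishing on sector}, the restriction of any $\phi$ to $\Fix(w)$ agrees with a polynomial in the surviving coordinates $\widetilde x_\bullet$, so both sides become honest linear functionals on a polynomial ring in $N_u=N_w$ variables (note $N_u=N_v=N_w$ when $w=uv$ with the relevant supports nested, by Lemma~\ref{lemma: fixed set}); it then suffices to check the identity on monomials, where $\eta_{f^u}(\lfloor1\rfloor,-)$ picks out the coefficient of $\lfloor\mathcal H_u\rfloor$.

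At this point the proof splits along the same case analysis used to compute the $\cup$-product: (i) $u,v$ non-intersecting, where $\sigma_{u,v}\in\CC$ and the claim follows from multiplicativity of $\mathcal H_\bullet$ and $\age$; (ii) $u,v$ special cycles intersecting in one index (Proposition~\ref{prop: special cycles intersecting by one index}); (iii) the generalized-cycle-with-inverse case (Proposition~\ref{prop: cup-product of generalized cycles with inverse}), where $\sigma_{u,v}=\Phi^{(D)}_{i_1,\dots,i_k}$ and one uses that $\lfloor\Phi^{(D)}_{i_1,\dots,i_k}\rfloor$ is, up to the explicit constant $(-n)^{k-1}$, exactly the ``unit'' for the Frobenius pairing sliding between sectors; and (iv) the special-times-diagonal case (Proposition~\ref{prop: product of special with diagonal}). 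Since Corollary~\ref{corollary: generators of HH} shows $\A'_{f,G}$ is generated over $\CC[\bx]$ by the elementary $\xi_{(\id,t_i^d)}$ and $\xi_{(i,j)t_i^dt_j^{-d}}$, and the Frobenius identity for a triple is an associativity-type statement, it is enough to verify it when $\xi_{u_2}$ (or two of the three) are such generators and then propagate by induction on length using the already-established product formulae. The main obstacle I anticipate is the bookkeeping of signs and of the determinant factors $\det(g)$ in the asymmetry relation $(-1)^{M_u}\det(g)\mathcal H_u=\mathcal H_{u^{-1}}$: one must check that the $\eps_{u,v}$ signs from Proposition~\ref{prop: Clifford multiplications}, the $(-1)^D$ in the definition of $\mathcal H_u$, and the $\widetilde g_{i_\bullet}$ prefactors in Propositions~\ref{prop: special cycles intersecting by one index} and \ref{prop: product of special with diagonal} all conspire so that the two functionals above literally coincide rather than differing by a scalar; the bigrading constraint of Proposition~\ref{proposition: cup product preserves bigrading} (forcing $\deg$ of the combined polynomial to hit the top degree $(n-2)N_w$ on both sides) is the tool that makes this a finite, monomial-by-monomial check.
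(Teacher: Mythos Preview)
Your plan would in principle work but misses the key simplification the paper uses, and contains one genuine error.

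\textbf{The missed reduction.} The paper does \emph{not} compare $\eta_{f^w}$ with $\eta_{f^u}$ for a general pair $u,v$ with $w=uv$. Instead it first observes that the Frobenius identity follows immediately from associativity of $\cup$ once one shows the single identity
\[
\eta_u\big(\lfloor\phi'\rfloor\xi_u,\lfloor\phi''\rfloor\xi_{u^{-1}}\big)=\eta_\id\big(\xi_\id,\lfloor\phi'\rfloor\xi_u\cup\lfloor\phi''\rfloor\xi_{u^{-1}}\big),
\]
i.e.\ that $\eta_{f,G}(a,b)=\eta_\id(\xi_\id,a\cup b)$. This collapses the problem from a case analysis over \emph{pairs} $(u,v)$ (your (i)--(iv)) to a case analysis over a \emph{single} element $u$: one only has to compute $\xi_u\cup\xi_{u^{-1}}$ and check that $\lfloor\mathcal H_u\rfloor\xi_u\cup\xi_{u^{-1}}=(n-1)^{-d_u}\lfloor\mathcal H_\id\rfloor\xi_\id$. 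The paper then runs through $u$ a non-special cycle (Proposition~\ref{proposition: u and u^-1 nonspecial}), $u$ a special cycle (Proposition~\ref{prop: cup-product of generalized cycles with inverse}), $u$ diagonal (Proposition~\ref{prop: diagonal elements product}), and general $u$ by multiplicativity over disjoint cycles. None of the sign bookkeeping with $\eps_{u,v}$, $\widetilde g_{i_\bullet}$, or Proposition~\ref{prop: special cycles intersecting by one index} is needed.

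\textbf{The error.} Your assertion that $N_u=N_v=N_w$ ``when $w=uv$ with the relevant supports nested'' is false in general: already for $u=v=(i,j)$ one has $N_u=N-1$ but $N_w=N$; for non-intersecting $u,v$ one has $N_w=N_u+N_v-N$. So you cannot identify the two functionals as living on the same polynomial ring, and this is exactly why comparing $\eta_{f^w}$ with $\eta_{f^u}$ directly is awkward. The paper's trace reduction sidesteps this entirely by always landing in $\Jac(f^\id)=\Jac(f)$.

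Your induction on generators via Corollary~\ref{corollary: generators of HH} is valid reasoning, but once you have the trace formula it is unnecessary.
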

\begin{proof}

It suffices to check that the value of the bilinear form depends only on the product of the arguments. In non tautologically vanishing cases this means checking  \[\eta_u \left(\lfloor \phi' \rfloor \xi_u, \lfloor \phi'' \rfloor \xi_{u^{-1}} \right)=\eta_\id(\xi_\id,\lfloor \phi' \rfloor\xi_u\cup \lfloor \phi'' \rfloor\xi_{u^{-1}}).\]

By definition, the left hand side is equal to $\eta_{f^u}(\lfloor \phi' \rfloor, \lfloor \phi'' \rfloor)=\eta_{f^u}(\lfloor 1 \rfloor, \lfloor \phi'\phi'' \rfloor)$. Note that we may assume $\phi''$ to be $u^{-1}$ and, thus, $u$ invariant. This allows us to rewrite the right hand side as  $\eta_\id(\lfloor1\rfloor,\lfloor \phi' \phi''\rfloor\xi_u\cup \xi_{u^{-1}})$. Therefore, we have to check that  
\[
    \eta_\id(\xi_\id, [\phi' \phi'']\xi_u\cup \xi_{u^{-1}}) =
    \begin{cases}
     c (n-1)^{N_u}  \quad & \text{ if } \exists c\in \CC, \text{ s.t. } [\phi' \phi''] = c \lfloor \mathcal{H}_u\rfloor
     \\
     0 & \text{ otherwise.}
    \end{cases}
\]
We consider some special cases and derive then the general situation.

{\bf Case 1.} Let $u=(\sig, g)$ be a length $k$ non--special cycle. By Proposition~\ref{proposition: u and u^-1 nonspecial} we have 
\begin{align*}
\lfloor & \mathcal{H}_u \rfloor\xi_u\cup \xi_{u^{-1}}=(\det(g)^{-1}-1) \left\lfloor \prod_{a \not\in I_{u}^c} n(n-1) x_{a}^{n-2} \right\rfloor \cdot n^k \frac{\det(g)}{1 - \det(g)} \left\lfloor \prod_{a \in I_{u}^c} x_{a}^{n-2} \right\rfloor \xi_\id 
\\
&= (n-1)^{-k}\lfloor \mathcal{H}_\id\rfloor\xi_\id, 
\end{align*}
and the statement follows by 
\[
 \eta_\id(\xi_\id, (n-1)^{-k}\lfloor \mathcal{H}_\id\rfloor\xi_\id) = (n-1)^{-k} \eta_\id(\xi_\id, \lfloor \mathcal{H}_\id\rfloor\xi_\id) = (n-1)^{N-k}.
\]

Other monomials in $\Jac(f_u)$ are of smaller degree and, thus, $\left\lfloor \phi\prod_{a \not\in I_{u}^c} n(n-1) x_{a}^{n-2} \right\rfloor$ is of degree smaller than degree of $\mathcal{H}_\id$, hence $\eta_\id(\xi_\id,\lfloor \phi\rfloor\xi_u\cup \xi_{u^{-1}})=0$ for such monomial.

{\bf Case 2.} Let $u$ be a length $k$ special cycle. By Proposition~\ref{prop: cup-product of generalized cycles with inverse} we have
\begin{align*}
\lfloor &\mathcal{H}_u \rfloor\xi_u\cup \xi_{u^{-1}} = \left\lfloor (-1)^{k-1}n(n-1)\tilde{x}_{i_1}^{n-2} \prod_{a \not\in I_{u}^c} n(n-1) x_{a}^{n-2} \right\rfloor \cdot (-n)^{k-1} \left\lfloor \Phi^{(D)}_{i_1,\dots,i_k}(\bx) \right\rfloor \xi_\id
\\
&=n^{k}(n-1)\left\lfloor \prod_{a \not\in I_{u}^c} n(n-1) x_{a}^{n-2} \cdot \tilde{x}_{i_1}^{n-2}\cdot \Phi^{(D)}_{i_1,\dots,i_k}(\bx) \right\rfloor 
\\
& = \textit{(Eq.~\eqref{formula: coordinates on sector} and \eqref{formula: coordinate vanishing on sector})}
\\
& = n^{k}(n-1)\left\lfloor \prod_{a \not\in I_{u}^c} n(n-1) x_{a}^{n-2} \cdot x_{i_1}^{n-2}\cdot x_{i_2}^{n-2}x_{i_3}^{n-2}\ldots x_{i_k}^{n-2}\right\rfloor
\\
& = (n-1)^{-k+1}\lfloor \mathcal{H}_\id\rfloor\xi_\id.
\end{align*}
Other monomials in $\Jac(f_u)$ are of smaller degree and, thus, $\left\lfloor \phi\Phi^{(D)}_{i_1,\dots,i_k}(\bx) \right\rfloor$ is of degree smaller than degree of $\mathcal{H}_\id$, hence $\eta_\id(\xi_\id,\lfloor \phi\rfloor\xi_u\cup \xi_{u^{-1}})=0$ for such monomial.

{\bf Case 3.} Let $u = (\id,g)$. This case follows from \cite{BT2} combined with \cite{BTW16} but we repeat the proof here for completeness.
We have by using main property of $H_{g,h}$
\begin{align}
    \eta_{f,G} & \left( \xi_\id, \frac{\hess(f^g)}{\mu_{f^g}} \xi_g \cup \xi_{g^{-1}} \right)  =  \eta_{f,G} \left( \xi_\id, (-1)^{\frac{d_g(d_g-1)}{2}} \prod_{i \in I_g^c}\frac{1}{g_i-1}\frac{\hess(f^g)}{\mu_{f^g}} H_{g,g^{-1}} \xi_\id \right)
    \\
    & = (-1)^{\frac{d_g(d_g-1)}{2}} \prod_{i \in I_g^c} \frac{1}{g_i-1} \eta_{g} \left( [1], \frac{[\hess(f^g)]}{\mu_{f^g}} \right)
    = \eta_{f,G} \left( \frac{\hess(f^g)}{\mu_{f^g}} \xi_g, \xi_{g^{-1}} \right).
\end{align}
From where we conclude that $\eta([\phi]\xi_g \cup [\psi]\xi_h, \xi_{(gh)^{-1}}) = \eta(\xi_\id, [\phi]\xi_g \cup [\psi]\xi_h \cup \xi_{(gh)^{-1}})$ what completes the proof of this case.

{\bf Case 4.} Consider arbitray $u\in G$. Let it be decomposed into a product of non--intersecting cycles $u = (\id,g_0) \prod_{i=1}^k (\sigma_i,g_i)$. Setting $\sigma_0 := \id$ we have
\[
    \lfloor \mathcal{H}_u \rfloor \xi_u \cup \xi_{u^{-1}} = \bigcup_{i=0}^k \lfloor \mathcal{H}_{(\sigma_i,g_i)} \rfloor \xi_{(\sigma_i,g_i)} \cup \xi_{(\sigma_i,g_i)^{-1}}.
\]
By using the computations of the cases above the statement follows.
\end{proof}

\section{The $G$-action on $\A'_{f,G}$}\label{section: G--action}
The purpose of this section is to compute the Hochschild cohomology ring $\ccHH^*(\CC[\bx]\rtimes G,f)$. According to Theorem~\ref{theorem: Shklyarov}, it's isomorphic to $\left( \A'_{f,G}\right)^G$ and we need to compute the $G$--invariant subspace of $\A'_{f,G}$.
Note that in \cite{S20} this group action is not calculated for the noncommutative groups (see Remark 4.18 loc.cit.). 

\subsection{The formulae for the $G$--action}\label{section: G--action formulae}
By Eq.~\eqref{eq: G--action on diagonalizable} we have for $d\not\equiv 0 \mod n$
\begin{equation}\label{diagonal action}
t_i^*(\xi_{t_j^d})=
 \begin{cases}
      \xi_{t_j^d} \quad & \text{ if } i\ne j,
        \\
    \zeta_n^{-1}\xi_{t_j^d} \quad & \text{ if } i=j.        
\end{cases}
\end{equation}

Let us now determine the action of the diagonal subgroup on $\A'_{f,G}$ by computing its action on the elements $\xi_{(i,j)\cdot t_i^{d}t_j^{-d}}$

\begin{proposition}\label{action of diagonal}
For any $i < j$, $k \neq i,j$ and $d \in \ZZ$ we have
\begin{description}
 \item[(1)] $t_k^*(\xi_{(i,j)\cdot t_i^{d}t_j^{-d}})=\xi_{(i,j)\cdot t_i^{d}t_j^{-d}}$,
 \item[(2)] $t_i^*(\xi_{(i,j)\cdot t_i^{d}t_j^{-d}})=\xi_{(i,j)\cdot t_i^{d-1}t_j^{-d+1}}$,
 \item[(3)] $t_j^*(\xi_{(i,j)\cdot t_i^{d}t_j^{-d}})=\zeta_n^{-1}\xi_{(i,j)\cdot t_i^{d+1}t_j^{-d-1}}$.
\end{description}

\end{proposition}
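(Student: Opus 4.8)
The approach is the one indicated in Section~\ref{section: G on HH from def}: because the HKR identification used to build $\A'_{f,G}$ is not $G$--equivariant, I do not compute $t_k^*$ from its definition but recover it from the braided commutativity relation Eq.~\eqref{eq: braided commutativity} together with the cup--product formulae of the previous sections. Write $u:=(i,j)\cdot t_i^dt_j^{-d}$, a special cycle of length $2$; then $d_u=1=d_{t_k}$, so both $\xi_u$ and $\xi_{t_k}$ are odd and the sign $(-1)^{|\xi_u||\xi_{t_k}|}$ in Eq.~\eqref{eq: braided commutativity} equals $-1$. Specializing that relation at $\phi=\psi=1$ and $v=t_k$ gives
\[
 \xi_u\cup\xi_{t_k}\;=\;-\,\xi_{t_k}\cup(t_k^{-1})^*(\xi_u).
\]
I will evaluate the left--hand side from the multiplication table, read off the $G$--grading of $(t_k^{-1})^*(\xi_u)$ from the conjugate $t_k^{-1}u\,t_k$, match coefficients on both sides (which simultaneously forces the polynomial part of $(t_k^{-1})^*(\xi_u)$ to be a constant), and then invert to obtain $t_k^*$.

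For~(1) there is nothing to do with Eq.~\eqref{eq: braided commutativity}: when $k\neq i,j$ the coordinate $x_k$ is already an eigenvector of $u$ with eigenvalue $1$, so $u$ and $t_k$ are simultaneously diagonalizable, $t_k$ acts by $1$ on the unique $u$--eigenvector of nontrivial eigenvalue, and $t_kut_k^{-1}=u$; Eq.~\eqref{eq: G--action on diagonalizable} then gives $t_k^*(\xi_u)=\xi_u$ at once. Equivalently, $u$ and $t_k$ are non--intersecting, $\xi_u\cup\xi_{t_k}$ and $\xi_{t_k}\cup\xi_u$ differ by the sign $-1$ coming from anticommuting the two degree--one Clifford factors $\p_{\widetilde\theta_u},\p_{\theta_k}$, and feeding this into the displayed identity forces the scalar to be $1$.

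For~(2) and~(3) the elements $u$ and $t_i$, resp.\ $u$ and $t_j$, are \emph{not} simultaneously diagonalizable, which is exactly why the braided argument is needed. First, using the group law $(\sigma',h)(\sigma,g)=(\sigma'\sigma,\sigma^{-1}(h)g)$ one computes $t_i^{-1}u\,t_i=(i,j)\cdot t_i^{d+1}t_j^{-(d+1)}$ and $t_j^{-1}u\,t_j=(i,j)\cdot t_i^{d-1}t_j^{-(d-1)}$, which already fixes the $G$--grading of the right--hand side. Next, Proposition~\ref{prop: product of special with diagonal}, Case~1 (the diagonal factor is non--special with $|I^c|=1$) evaluates the relevant cup--products: writing the cycle as $(i_1,i_2)$ with $i_1=i<i_2=j$ and recalling from Example~\ref{example: special cycle} that $\widetilde g_{i_1}=1$ and $\widetilde g_{i_2}=g_{i_1}=\zeta_n^{d}$, one gets
\[
 \xi_u\cup\xi_{t_i}=-\,\xi_{ut_i},\qquad \xi_u\cup\xi_{t_j}=-\zeta_n^{d}\,\xi_{ut_j},\qquad \xi_{t_i}\cup\xi_{t_i^{-1}ut_i}=\xi_{ut_i},\qquad \xi_{t_j}\cup\xi_{t_j^{-1}ut_j}=\zeta_n^{d-1}\,\xi_{ut_j}.
\]
Writing $(t_i^{-1})^*(\xi_u)=\lfloor\phi\rfloor\xi_{t_i^{-1}ut_i}$, expanding $\xi_{t_i}\cup\lfloor\phi\rfloor\xi_{t_i^{-1}ut_i}$ via Eq.~\eqref{eq: HH cup} and comparing with the displayed relation forces $\lfloor\phi(t_i(\bx))\rfloor=1$ in $\Jac(f^{ut_i})$, hence $\lfloor\phi\rfloor=1$ since $\bx\mapsto t_i(\bx)$ is an algebra isomorphism of the relevant Jacobian rings fixing constants; the $t_j$--case gives $\lfloor\phi\rfloor=\zeta_n$ in the same way. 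Thus $(t_i^{-1})^*(\xi_u)=\xi_{(i,j)t_i^{d+1}t_j^{-(d+1)}}$ and $(t_j^{-1})^*(\xi_u)=\zeta_n\,\xi_{(i,j)t_i^{d-1}t_j^{-(d-1)}}$, and inverting and reindexing $d$ yields (2) and (3).

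The computational core is entirely contained in Proposition~\ref{prop: product of special with diagonal}, so I expect no real obstacle; the one place demanding care is keeping the semidirect--product convention and the normalization $\widetilde g_{i_1}=1$, $\widetilde g_{i_a}=g_{i_1}\cdots g_{i_{a-1}}$ straight, since it is precisely the distinguished role of the first index of a cycle that produces the extra factor $\zeta_n^{-1}$ in~(3) but not in~(2).
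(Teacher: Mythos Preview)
Your treatment of~(1) is correct and in fact cleaner than the paper's: since $t_k$ and $u=(i,j)t_i^dt_j^{-d}$ act on disjoint coordinate sets they are simultaneously diagonal and Eq.~\eqref{eq: G--action on diagonalizable} applies directly.

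For~(2) and~(3), however, there is a genuine gap. You write $(t_i^{-1})^*(\xi_u)=\lfloor\phi\rfloor\xi_{t_i^{-1}ut_i}$ and use the single relation $\xi_u\cup\xi_{t_i}=-\xi_{t_i}\cup\lfloor\phi\rfloor\xi_{t_i^{-1}ut_i}$. The trouble is where this relation lives: the product sits in the sector of $ut_i=(i,j)t_i^{d+1}t_j^{-d}$, which is \emph{non--special}, so $\Fix(ut_i)=\{x_i=x_j=0\}$ and $\Jac(f^{ut_i})$ does not see the eigen--fixed variable $\widetilde x$ at all. On the other hand $\phi$ lives in $\Jac(f^{t_i^{-1}ut_i})$, where $t_i^{-1}ut_i$ \emph{is} special and $\widetilde x$ is a genuine coordinate. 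The passage $\phi\mapsto\lfloor\phi(t_i(\bx))\rfloor$ from $\Jac(f^{t_i^{-1}ut_i})$ to $\Jac(f^{ut_i})$ is therefore not an isomorphism but a surjection with kernel the ideal $(\widetilde x)$; your sentence ``$\bx\mapsto t_i(\bx)$ is an algebra isomorphism of the relevant Jacobian rings'' is precisely where the argument breaks. Concretely, your equation only forces $\phi\equiv 1\pmod{\widetilde x}$, and nothing you have written rules out $\phi=1+c\,\widetilde x+\cdots$.

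The paper closes exactly this hole with Lemma~\ref{injection}. It first applies $t_i^*$ (resp.\ $t_j^*$) to the relation $\xi_u\cup\xi_u=-n^{-1}\lfloor\Phi^{(d)}_{ij}\rfloor\xi_{\id}$, which lands in the \emph{identity} sector; combined with the injectivity of $\,\cdot\cup\xi_{(i,j)t_i^{d'}t_j^{-d'}}:\A'_{(i,j)t_i^{d'}t_j^{-d'}}\to\A'_{\id}$ this yields $\lfloor\phi^2\rfloor=\lfloor1\rfloor$ in the full Jacobian $\Jac(f^{t_iut_i^{-1}})$, hence $\lfloor\phi\rfloor=\pm1$. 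Only after that is your braided--commutativity computation with $\xi_{t_i}$ used, and at that point it is exactly what is needed to pick the sign. So your computation in Proposition~\ref{prop: product of special with diagonal} is right and is indeed the last step of the proof, but it cannot replace the preliminary use of Lemma~\ref{injection} (or an equivalent device) that forces $\phi$ to be a scalar.
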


We will need the following 
\begin{lemma}\label{injection}
The map $ \bullet \cup\xi_{(i,j)\cdot t_i^{d}t_j^{-d}}: \A'_{(i,j)\cdot t_i^{d}t_j^{-d}} \rightarrow \A'_\id$ is an injection.
\end{lemma}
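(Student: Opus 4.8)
The plan is to make the map fully explicit and then reduce its injectivity to an elementary statement about multiplication in a two--variable Jacobian ring. Set $u:=(i,j)\cdot t_i^{d}t_j^{-d}$. This is a length $2$ special cycle, hence an involution ($u^{2}=\id$), so the map does land in $\A'_\id$. By Example~\ref{example: special cycle} with $k=2$, $\Fix(u)$ has codimension one, a coordinate on it being $\widetilde x_u=\tfrac12\bigl(x_i+\zeta_n^{d}x_j\bigr)$, and $f^{u}=2\widetilde x_u^{\,n}+\sum_{l\ne i,j}x_l^{n}$. Hence, with $R:=\CC[x_i,x_j]/(x_i^{n-1},x_j^{n-1})$, $S:=\bigotimes_{l\ne i,j}\CC[x_l]/(x_l^{n-1})$ and $T:=\CC[\widetilde x_u]/(\widetilde x_u^{\,n-1})$, we get $\Jac(f)=R\otimes S$ and $\Jac(f^{u})=T\otimes S$. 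By Eq.~\eqref{eq: HH cup} together with Lemma~\ref{square of mixed transposition} (applied with the two transpositions equal, i.e.\ $d_1=d_2=d$), the map sends $\lfloor\phi\rfloor\xi_u$ to $\lfloor\phi\cdot\Phi_{ij}^{(d)}\rfloor\xi_\id$, where by that lemma $\Phi_{ij}^{(d)}$ equals, up to a nonzero scalar, $\sum_{p+q=n-2}(\zeta_n^{-d}x_i)^{p}x_j^{q}$. Since $\Phi_{ij}^{(d)}$ involves only $x_i,x_j$, under these decompositions the map is $\mu\otimes\id_S$, where $\mu\colon T\to R$ is the linear map $\widetilde x_u^{\,a}\mapsto\widetilde x_u^{\,a}\,\Phi_{ij}^{(d)}$; so it suffices to prove that $\mu$ is injective.

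For this I would pass to the coordinates $y_1:=\zeta_n^{-d}x_i$, $y_2:=x_j$, so that $R=\CC[y_1,y_2]/(y_1^{n-1},y_2^{n-1})$, the element $\Phi_{ij}^{(d)}$ becomes (up to a nonzero scalar) $\Psi:=\sum_{p+q=n-2}y_1^{p}y_2^{q}=(y_1^{n-1}-y_2^{n-1})/(y_1-y_2)$, and $\widetilde x_u=\tfrac{\zeta_n^{d}}{2}(y_1+y_2)$. The heart of the argument is the identity $\mathrm{Ann}_R(\Psi)=(y_1-y_2)$. The inclusion $\supseteq$ is immediate from $(y_1-y_2)\Psi=y_1^{n-1}-y_2^{n-1}=0$ in $R$. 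For equality I would compare dimensions: on one hand $R/(y_1-y_2)\cong\CC[y]/(y^{n-1})$ has dimension $n-1$, so $\dim_\CC(y_1-y_2)=(n-1)^2-(n-1)$; on the other hand a direct computation in $R$ gives $\Psi\,y_1^{k}y_2^{l}=\sum_{p=k+l}^{n-2}y_1^{p}y_2^{\,n-2+k+l-p}$, which depends only on $k+l$ and vanishes once $k+l>n-2$, so $\Psi R=\mathrm{span}\{\Psi y_1^{m}:0\le m\le n-2\}$, and these $n-1$ vectors are linearly independent because their terms of lowest $y_1$--degree are the pairwise distinct monomials $y_1^{m}y_2^{n-2}$. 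Thus $\dim_\CC\Psi R=n-1$, so $\dim_\CC\mathrm{Ann}_R(\Psi)=(n-1)^2-(n-1)=\dim_\CC(y_1-y_2)$, whence equality.

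Finally, $\ker\mu$ is the intersection inside $R$ of the $(n-1)$--dimensional subspace $\mathrm{span}\{1,\widetilde x_u,\dots,\widetilde x_u^{\,n-2}\}$ with $\mathrm{Ann}_R(\Psi)=(y_1-y_2)$; equivalently, it is the kernel of the composite $\mathrm{span}\{1,\dots,\widetilde x_u^{\,n-2}\}\hookrightarrow R\twoheadrightarrow R/(y_1-y_2)\cong\CC[y]/(y^{n-1})$. Under this composite $\widetilde x_u^{\,a}\mapsto\zeta_n^{da}\,y^{a}$ for $0\le a\le n-2$, i.e.\ a basis goes to a basis, so the composite is injective and $\ker\mu=0$. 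Hence $\mu$, and with it $\mu\otimes\id_S$, is injective, which is the claim.

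I expect the only genuine computation to be the evaluation of $\dim_\CC\Psi R$ (equivalently of $\mathrm{Ann}_R(\Psi)$); the remaining steps are bookkeeping with the linear forms $\widetilde x_u$, $y_1$, $y_2$. One should also double--check the precise normalisation of $\Phi_{ij}^{(d)}$ coming out of Lemma~\ref{square of mixed transposition}, and recall from Remark~\ref{remark: product well-defined} that the rule $\lfloor\phi\rfloor\xi_u\mapsto\lfloor\phi\,\Phi_{ij}^{(d)}\rfloor\xi_\id$ is independent of the chosen representative of $\lfloor\phi\rfloor\in\Jac(f^{u})$.
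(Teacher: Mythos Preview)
Your proof is correct. Both your argument and the paper's rest on the same underlying dimension count---that multiplication by $\Phi_{ij}^{(d)}$ in the two-variable Jacobian ring has rank $n-1$---but they package it differently. The paper observes that the image of the cup map is the entire principal ideal $I=\lfloor\Phi_{ij}^{(d)}\rfloor\cdot\Jac(f)$ (since every class in $\Jac(f^u)$ is represented by some polynomial on $\CC^N$), then computes $\dim\Jac(f)/I=(n-1)^{N-1}(n-2)$ by exhibiting a monomial basis of the quotient, concluding $\dim I=(n-1)^{N-1}=\dim\A'_u$ and hence injectivity. You instead factor the map as $\mu\otimes\id_S$ and argue on the kernel side: you identify $\mathrm{Ann}_R(\Psi)=(y_1-y_2)$ via the same dimension count, and then check directly that the lifted subspace $\mathrm{span}\{\widetilde x_u^{\,a}:0\le a\le n-2\}\subset R$ meets this ideal only in zero by passing to $R/(y_1-y_2)\cong\CC[y]/(y^{n-1})$. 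The paper's route is shorter and avoids the annihilator; yours makes the role of the specific lifts $\widetilde x_u^{\,a}$ more explicit and does not need the (easy but not entirely tautological) observation that the image is all of $I$.
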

\begin{proof}
By Proposition~\ref{prop: cup-product of generalized cycles with inverse} the image of the map is the ideal $I\subset\A'_\id$ generated by $\lfloor \Phi^{(d)}_{ij}\rfloor$. Since in $\A'_\id/I$ we have $x_j^{n-2}=-\sum^{n-2}_{a=1} \zeta_n^{-da}x_i^ax_j^{n-2-a}$ the quotient ring has a basis $\{x_1^{e_1}\ldots x_N^{e_N}\}$ with $e_k<n-1$ for $k\ne j$ and $e_j<n-2$. Therefore, 
\begin{align*}
 & \dim\A'_\id/I=(n-1)^{N-1}(n-2),
 \\
 & \dim I=\dim\A'_\id-\dim\A'_\id/I=(n-1)^N-(n-1)^{N-1}(n-2)=(n-1)^{N-1}.
\end{align*}
Since we also have $\dim\A'_{(t_i^{d}t_j^{-d},(i,j))}=(n-1)^{N-1}$ the statement follows.
\end{proof}

\begin{proof}[Proof of Proposition~\ref{action of diagonal}]
{\bf (1).} 
Since $t_k$ and $(i,j) t_i^{d}t_j^{-d}$ commute as $G$--elements, there is a polynomial $P \in \CC[\bx]$ preserved by $(i,j)t_i^{d}t_j^{-d}$, s.t. $t_k^*(\xi_{(i,j)\cdot t_i^{d}t_j^{-d}}=\lfloor P\rfloor\xi_{(i,j)\cdot t_i^{d}t_j^{-d}}$. We then have by using Propositon~\ref{prop: cup-product of generalized cycles with inverse} and braided commutativity
\begin{align*}
    \lfloor \Phi^{(d)}_{ij}\rfloor\xi_\id  &= t_k^*\left(\lfloor \Phi_{ij}^{(d)}\rfloor\xi_\id\right) = t_k^*\left(- n^{-1} \xi_{(i,j)\cdot t_i^{d}t_j^{-d}}\cup\xi_{(i,j)\cdot t_i^{d}t_j^{-d}}\right)
    \\
    &= - n^{-1} \cdot t_k^*(\xi_{(i,j)\cdot t_i^{d}t_j^{-d}})\cup t_k^*(\xi_{(i,j)\cdot t_i^{d}t_j^{-d}})
    =- n^{-1} \cdot \lfloor P\rfloor\xi_{(i,j)\cdot t_i^{d}t_j^{-d}}\cup\lfloor P\rfloor\xi_{(i,j)\cdot t_i^{d}t_j^{-d}}
    \\
    &= - n^{-1} \cdot  \lfloor P^2\rfloor\xi_{(i,j)\cdot t_i^{d}t_j^{-d}}\cup\xi_{(i,j)\cdot t_i^{d}t_j^{-d}}= \lfloor P^2 \Phi^{(d)}_{ij}\rfloor\xi_\id
\end{align*}
We see from Lemma~\ref{injection} that $\lfloor \Phi^{(d)}_{ij}\rfloor=\lfloor P^2 \Phi^{(d)}_{ij}\rfloor$ in identity sector implies $\lfloor P^2\rfloor=\lfloor 1\rfloor$ in $\A_{(i,j)\cdot t_i^{d}t_j^{-d}}$. We then have $P(0)=\pm1$. If $P$ has a term of a smallest non zero degree less than $n-1$, then $P^2$ will also have the term of the same degree implying $\lfloor P\rfloor=\lfloor \pm1\rfloor$. Now to see that $\lfloor P\rfloor=\lfloor 1\rfloor$ we make us of the other products we know. Assume $k > i$. Braided commutativity and Proposition~\ref{prop: product of non-intersecting elements} imply
\begin{align*}
    \xi_{(i,j)\cdot t_k^{-1}t_i^{d}t_j^{-d}} &= \xi_{(i,j)\cdot t_i^{d}t_j^{-d}}\cup\xi_{t_k^{-1}}=-\xi_{t_k^{-1}}\cup t_k^*( \xi_{(i,j)\cdot t_i^{d}t_j^{-d}})
    \\
    &=-\xi_{t_k^{-1}}\cup\lfloor P\rfloor \xi_{(i,j)\cdot t_i^{d}t_j^{-d}}=\lfloor P\rfloor \xi_{(i,j)\cdot t_k^{-1}t_i^{d}t_j^{-d}}
  \end{align*}
and the statement follows. The same reasoning works for $k < i$ with the other sequence of signs.

{\bf (2) and (3).} First note that $t_i^*(\Phi^{(d)}_{ij}) = \Phi^{(d-1)}_{ij}$ and $t_j^*(\Phi^{(d)}_{ij})=\zeta_n^{-2} \Phi^{(d+1)}_{ij}$. The rest of the argument is completely parallel to (1) by employing Proposition~\ref{prop: product of special with diagonal} to compute products.

\end{proof}
\begin{corollary}\label{action of diag on special}
    Let $(\sigma,g)\in G$ be special with $\sigma = (i_1,\dots,i_k)$ and $h \in G^d$. We have
    \begin{equation}
        h^*\left(\xi_{(\sigma,g)}\right) = \left(\prod_{p=2}^k h_{i_p} \right)^{-1}\xi_{h(\sigma,g)h^{-1}}
    \end{equation}
\end{corollary}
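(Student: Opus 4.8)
The plan is to exploit that the $G$--action on $\A'_{f,G}$ inherited from $\ccHH^*(\CC[\bx],f;\CC[\bx]\rtimes G)$ is by $\CC$--algebra automorphisms, so that $h^*(a\cup b)=h^*(a)\cup h^*(b)$, and then to feed the decomposition of $\xi_{(\sigma,g)}$ used in the proof of Proposition~\ref{prop: cup-product of generalized cycles with inverse} into the explicit formulae of Proposition~\ref{action of diagonal}. The first step is to reduce to $h=t_l$: conjugation by a diagonal element changes neither the permutation $\sigma$ nor the chosen cyclic labelling $(i_1,\dots,i_k)$, so the index set $\{i_2,\dots,i_k\}$ appearing on the right is independent of $h$, and $h\mapsto\big(\prod_{p=2}^k h_{i_p}\big)^{-1}$ is a character of $G^d$; hence if the identity holds for $h_1$ and $h_2$ it holds for $h_1h_2$. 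Since the $t_l$ generate $G^d$ and the $t_l^*$ pairwise commute, it suffices to prove, for each $l$,
\[
  t_l^*\big(\xi_{(\sigma,g)}\big)=\zeta_n^{-1}\,\xi_{t_l(\sigma,g)t_l^{-1}}\ \text{ if }l\in\{i_2,\dots,i_k\},\qquad t_l^*\big(\xi_{(\sigma,g)}\big)=\xi_{t_l(\sigma,g)t_l^{-1}}\ \text{ otherwise.}
\]

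Following the proof of Proposition~\ref{prop: cup-product of generalized cycles with inverse}, write $\xi_{(\sigma,g)}=c\,\xi_{\alpha_1}\cup\dots\cup\xi_{\alpha_{k-1}}$ with $\alpha_a=(i_a,i_{a+1})\,t_{i_a}^{d_a}t_{i_{a+1}}^{-d_a}$ and $\alpha_1\cdots\alpha_{k-1}=(\sigma,g)$, the scalar $c\in\CC^{\ast}$ being the explicit product of a sign (Corollary~\ref{cor: transpositions cup--product commutation}) and $\widetilde g$--type factors read off from Proposition~\ref{prop: special cycles intersecting by one index}. Applying the multiplicative map $t_l^*$ and evaluating each factor by Proposition~\ref{action of diagonal} gives $t_l^*(\xi_{\alpha_a})=\zeta_n^{-\epsilon_a}\,\xi_{t_l\alpha_a t_l^{-1}}$, where $\epsilon_a=1$ when $l=\max(i_a,i_{a+1})$ and $\epsilon_a=0$ otherwise (in particular $t_l^*$ fixes $\xi_{\alpha_a}$ if $l\notin\{i_a,i_{a+1}\}$). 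The conjugates $t_l\alpha_a t_l^{-1}$ are again special $2$--cycles on $\{i_a,i_{a+1}\}$ whose ordered product is $t_l(\sigma,g)t_l^{-1}$, a special cycle with the same permutation $\sigma$, so the same decomposition yields $\xi_{t_l\alpha_1 t_l^{-1}}\cup\dots\cup\xi_{t_l\alpha_{k-1}t_l^{-1}}=c'\,\xi_{t_l(\sigma,g)t_l^{-1}}$ for the analogous scalar $c'$. Hence $t_l^*(\xi_{(\sigma,g)})=(c'/c)\,\zeta_n^{-\sum_a\epsilon_a}\,\xi_{t_l(\sigma,g)t_l^{-1}}$, and the statement reduces to the purely numerical identity $(c'/c)\,\zeta_n^{-\sum_a\epsilon_a}=\zeta_n^{-1}$ if $l\in\{i_2,\dots,i_k\}$ and $(c'/c)\,\zeta_n^{-\sum_a\epsilon_a}=1$ otherwise.

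If the labelling happens to be increasing, $i_1<\dots<i_k$, this is immediate: then $c=c'=1$ (all the $\widetilde g$--factors are trivial and the signs are $+1$), $\max(i_a,i_{a+1})=i_{a+1}$, and $\sum_a\epsilon_a$ equals $1$ precisely when $l\in\{i_2,\dots,i_k\}$. The main obstacle is the general case, where neither $c$ nor the separate contributions from Proposition~\ref{action of diagonal} are clean: one must check that the discrepancy between the $\widetilde g$--factors in the decompositions of $\xi_{(\sigma,g)}$ and of $\xi_{t_l(\sigma,g)t_l^{-1}}$ exactly cancels the surplus factors of $\zeta_n^{-1}$ produced whenever $t_l^*$ meets some $\alpha_a$ at its larger index. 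I would carry this out by noting that conjugation by $t_l$ shifts the exponents $d_a$ only at the position of $l$ in the cycle and only by $\pm1$; since every $\widetilde g$--factor is a power of $\zeta_n$ whose exponent is a partial sum of the $d_a$, the $\zeta_n$--exponent of $c'/c$ telescopes and cancels the excess of $\sum_a\epsilon_a$ by exactly the required amount. An alternative route avoids this bookkeeping: since $t_l^*$ preserves the $\QQ$--bigrading of Section~\ref{section: bigrading} and $\Fix(t_l(\sigma,g)t_l^{-1})$ is one--dimensional, one first gets $t_l^*(\xi_{(\sigma,g)})=\gamma\,\xi_{t_l(\sigma,g)t_l^{-1}}$ for a constant $\gamma\in\CC^{\ast}$, and then pins down $\gamma$ (together with the analogous constant in the inverse sector, treated symmetrically) by applying $t_l^*$ to the identity of Proposition~\ref{prop: cup-product of generalized cycles with inverse} and using Eq.~\eqref{eq: G--action on id}.
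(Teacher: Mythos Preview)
Your approach is essentially the same as the paper's: the paper's proof is the single sentence ``This is immediate by using the decomposition of $\xi_{(\sigma,g)}$ as in proof of Proposition~\ref{prop: cup-product of generalized cycles with inverse},'' and you carry out exactly this, reducing to $h=t_l$, writing $\xi_{(\sigma,g)}$ as a $\cup$--product of the $\xi_{\alpha_a}$, and applying Proposition~\ref{action of diagonal} to each factor. In fact you are more careful than the paper, since you flag the bookkeeping needed when the labelling $(i_1,\dots,i_k)$ is not increasing and sketch how the extra $\zeta_n$--powers coming from Proposition~\ref{prop: special cycles intersecting by one index} cancel the surplus from Proposition~\ref{action of diagonal}; the paper subsumes all of this under the word ``immediate.''
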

\begin{proof}
    This is immediate by using the decomposition of $\xi_{(\sigma,g)}$ as in proof of Proposition~\ref{prop: cup-product of generalized cycles with inverse}.
\end{proof}
\begin{corollary}\label{action of diag on nonspecial}
    Let $(\sigma,g)\in G$ be non-special and $h \in G^d$. Denote $I := I^c_{(\sigma,g)}$. We have
    \begin{equation}
        h^*\left(\xi_{(\sigma,g)}\right) = \left(\prod_{i \in I} h_{i} \right)^{-1}\xi_{h(\sigma,g)h^{-1}}
    \end{equation}
\end{corollary}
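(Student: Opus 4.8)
The plan is to use that the $G$--action on $\A'_{f,G}$ is by graded $\CC$--algebra automorphisms, i.e. $h^*(a\cup b)=h^*(a)\cup h^*(b)$ (this multiplicativity is already invoked in the proof of Proposition~\ref{action of diagonal}(1)), together with the decomposition of Corollary~\ref{corollary: non-special decomposition} and the known action on special cycles from Corollary~\ref{action of diag on special}. First I would reduce to the case of a single non--special cycle: writing an arbitrary non--special $(\sigma,g)$ as a product of non--intersecting cyclic factors and a diagonal factor, Proposition~\ref{prop: product of non-intersecting elements} and the multiplicativity of $h^*$ reduce the computation to the cyclic factors, the special ones being covered by Corollary~\ref{action of diag on special} and the diagonal factor by Eq.~\eqref{eq: G--action on diagonalizable}. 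So assume $\sigma=(i_1,\dots,i_k)$ is a single cycle with $i_1$ the smallest index, whence $I=I^c_{(\sigma,g)}=\{i_1,\dots,i_k\}$.

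For such a cycle the idea is to peel off the non--special part and move the action through. Set $\zeta_n^d=\det(g)$ with $d\not\equiv0\bmod n$. By Corollary~\ref{corollary: non-special decomposition}, taken with the distinguished index $a=i_1$, there is a special cycle $(\sigma,g')$ with the same permutation and a $G^d$--element $(\id,g'')$ supported on the single coordinate $i_1$ with $(\id,g'')(\sigma,g')=(\sigma,g)$; Proposition~\ref{prop: product of special with diagonal}, Case~1 (with $|I^c_{g''}|=1$ and $g''$ acting on the first coordinate of $\sigma$) shows that the corresponding proportionality constant is $\widetilde{(g')}_{i_1}=1$, so $\xi_{(\sigma,g)}=\xi_{(\id,g'')}\cup\xi_{(\sigma,g')}$. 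Applying $h^*$ and multiplicativity, I would then use $h^*(\xi_{(\id,g'')})=h_{i_1}^{-1}\xi_{(\id,g'')}$ (from Eq.~\eqref{eq: G--action on diagonalizable}, equivalently by iterating Eq.~\eqref{diagonal action}, since the only nontrivial eigenvalue of $g''$ sits on the $i_1$--th coordinate) and $h^*(\xi_{(\sigma,g')})=\bigl(\prod_{p=2}^{k}h_{i_p}\bigr)^{-1}\xi_{h(\sigma,g')h^{-1}}$ (Corollary~\ref{action of diag on special}). Finally I would recombine: $h(\sigma,g')h^{-1}$ is again a special cycle with permutation $\sigma$, so Proposition~\ref{prop: product of special with diagonal}, Case~1, applies once more with the same trivial constant, and since $g''\in G^d$ commutes with $h$ the product lands in the $h(\sigma,g)h^{-1}$--sector; collecting the scalars $h_{i_1}^{-1}$ and $\bigl(\prod_{p=2}^{k}h_{i_p}\bigr)^{-1}$ yields $\bigl(\prod_{i\in I}h_i\bigr)^{-1}$.

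The main obstacle I anticipate is not conceptual but bookkeeping: one must verify that $h^*$ genuinely distributes over $\cup$ at each step, and that every auxiliary scalar produced by Proposition~\ref{prop: product of special with diagonal}, as well as every sign appearing when $i_1,\dots,i_k$ are not in ascending order (cf.\ Proposition~\ref{prop: special cycles intersecting by one index}), cancels between the two applications. Choosing the distinguished coordinate to be the smallest index $i_1$ is precisely what makes all those constants trivial, so that only the two eigenvalue factors survive. Beyond this, no new ingredient past the multiplication table of Section~\ref{section: products in mixed sectors} and Corollary~\ref{action of diag on special} is required.
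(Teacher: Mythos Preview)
Your proposal is correct and follows essentially the same approach as the paper's proof: decompose $\xi_{(\sigma,g)}$ via Corollary~\ref{corollary: non-special decomposition}, use the multiplicativity of $h^*$ together with Corollary~\ref{action of diag on special} and Eq.~\eqref{eq: G--action on diagonalizable}, and recombine. Your explicit reduction to a single cycle and your tracking of the proportionality constant (shown to be $1$ by choosing the distinguished coordinate to be the smallest index $i_1$, so that $\widetilde{(g')}_{i_1}=1$ in both applications of Proposition~\ref{prop: product of special with diagonal}) fills in details the paper's version leaves implicit.
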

\begin{proof}
    By Corollary~\ref{corollary: non-special decomposition} we have $h^*(\xi_{(\sigma,g)}) = h^*(\xi_{u} \cup \xi_{v})$ for $u,v \in G$, s.t. $v$ is special and $u \in G^d$ acting on the first indices of every cycle of $u$. By corollary above the action of $h$ of $\xi_{v}$ produces the product of those $h_\bullet^{-1}$ whose index is in $I$ and doesn't coincide with the first index of the any cycle. The action of $h$ on $\xi_{u}$ produces the product of those $h_\bullet^{-1}$ whose index is in $I$ and coincides with the first index of the any cycle. All together these give us the multiple written.
\end{proof}


Next we compute the action of the symmetric group on the diagonal sectors

\begin{proposition}\label{action of sym on diag}
For any $(\sigma,\id) \in G$ and $g \in G^d$ we have
\begin{align}
    & (\sigma,\id)^* ( \xi_{g}) = \xi_{(\sigma,\id)g(\sigma,\id)^{-1}}.
    \label{eq: Gact 1}
\end{align}
\end{proposition}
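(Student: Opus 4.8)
The plan is to reduce the statement to the case in which $\sig$ is a transposition and $g$ is one of the generators $t_k^d$ of $G^d$, and then to treat separately the commuting and the non-commuting situations. First I would write $\sig=\tau_1\cdots\tau_m$ as a product of transpositions, so that $(\sig,\id)^*$ is the corresponding composition of the $\tau_a^*$; since conjugating a diagonal element by a transposition is again diagonal, one may apply the transposition case repeatedly, and it suffices to prove $\tau^*(\xi_g)=\xi_{\tau g\tau^{-1}}$ for $\tau$ a transposition and $g\in G^d$. Next, writing $g=\prod_l t_l^{d_l}$ and using Proposition~\ref{prop: product of non-intersecting elements} to write $\xi_g=\bigcup_l\xi_{t_l^{d_l}}$, together with the fact that the $G$-action on $\A'_{f,G}$ is by graded algebra automorphisms compatible with $\cup$ (transported from $\ccHH^*$ via Theorem~\ref{theorem: Shklyarov}), one reduces to $g=t_k^d$.

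So let $\tau=(i,j)$ with $i<j$. If $k\notin\{i,j\}$ then $\tau$ and $t_k^d$ commute, hence are simultaneously diagonalizable; in a common eigenbasis (the vectors $e_l$, $l\neq i,j$, together with $e_i\pm e_j$), the only coordinate on which $t_k^d$ acts nontrivially is $e_k$, where $\tau$ has eigenvalue $1$. Hence the product $\prod(\lambda^\tau_\bullet)^{-1}$ occurring in Eq.~\eqref{eq: G--action on diagonalizable} equals $1$, and we get $\tau^*(\xi_{t_k^d})=\xi_{t_k^d}=\xi_{\tau t_k^d\tau^{-1}}$.

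Now suppose $k=i$ (the case $k=j$ is entirely parallel). Then $\tau t_i^d\tau^{-1}=t_j^d$, so $\tau^*(\xi_{t_i^d})=\lfloor Q\rfloor\xi_{t_j^d}$ for some class $Q\in\Jac(f^{t_j^d})$, and we must show $\lfloor Q\rfloor=\lfloor 1\rfloor$. I would apply the braided commutativity Eq.~\eqref{eq: braided commutativity} with $u=t_i^d$, $v=(i,j)$; both have odd parity, so it reads
\[
\xi_{t_i^d}\cup\xi_{(i,j)}=-\,\xi_{(i,j)}\cup\tau^*(\xi_{t_i^d}).
\]
Each $\cup$-product here is the product of a special length-$2$ cycle with a non-special diagonal element sharing exactly one index, so Case~1 of Proposition~\ref{prop: product of special with diagonal} evaluates them: the left-hand side equals $\xi_{(i,j)t_j^d}$ (relevant constant $\widetilde g_{i_1}=1$), and $\sig_{(i,j),t_j^d}=-1$. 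Substituting via Eq.~\eqref{eq: HH cup} yields $\lfloor Q((i,j)(\bx))\rfloor=\lfloor 1\rfloor$ in $\Jac(f^{(i,j)t_j^d})$. This is a priori weaker than $\lfloor Q\rfloor=\lfloor 1\rfloor$ in $\Jac(f^{t_j^d})$, and bridging this gap is the heart of the argument: I would observe that $\tau^*$ preserves the bigrading of Section~\ref{section: bigrading}. Indeed, applying Proposition~\ref{proposition: cup product preserves bigrading} to the displayed identity forces $q_\bullet(\tau^*(\xi_{t_i^d}))=q_\bullet(\xi_{t_i^d})=q_\bullet(\xi_{t_j^d})$, whence $\deg Q=0$, so $Q$ is a constant; the displayed relation then gives $Q=1$ on the nose.

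The general case then follows from the reductions of the first paragraph. The step I expect to require the most care is precisely the passage from the identity in $\Jac(f^{(i,j)t_j^d})$ — which sees $Q$ only after restriction to the smaller fixed locus $\{x_i=x_j=0\}$ — to the conclusion $Q=1$ in $\Jac(f^{t_j^d})$: one genuinely needs that $\tau^*$ cannot introduce a nontrivial polynomial factor on the diagonal sectors, and the degree/bigrading argument (itself deduced from Proposition~\ref{proposition: cup product preserves bigrading} rather than assumed) is what supplies this. A secondary point to verify carefully is the legitimacy of the reductions, i.e. that the $G$-action on $\A'_{f,G}$ is indeed by graded algebra automorphisms and that iterating over transpositions and generators is harmless.
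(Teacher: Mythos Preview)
Your overall strategy matches the paper's: reduce to a transposition $\tau=(i,j)$ acting on $\xi_{t_k^d}$, and in the non-commuting case $k\in\{i,j\}$ use braided commutativity with $\xi_{(i,j)}$ together with Proposition~\ref{prop: product of special with diagonal}. The point where your argument breaks down is exactly the one you flag as ``the heart of the argument.''

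Writing $\tau^*(\xi_{t_i^d})=\lfloor Q\rfloor\xi_{t_j^d}$ with $Q\in\Jac(f^{t_j^d})$, the braided identity with $\xi_{(i,j)}$ lands in $\A'_{(i,j)t_j^d}$, whose Jacobian has $x_i=x_j=0$. So that identity only sees $Q\bmod(x_i)$; the map $\xi_{(i,j)}\cup(-)\colon\A'_{t_j^d}\to\A'_{(i,j)t_j^d}$ has kernel the ideal generated by $x_i$ and is far from injective. Your proposed fix --- deducing $\deg Q=0$ from Proposition~\ref{proposition: cup product preserves bigrading} applied to that same identity --- does not work: if $Q=1+c\,x_i^m$ with $0<m\le n-2$, then $\xi_{(i,j)}\cup\lfloor c\,x_i^m\rfloor\xi_{t_j^d}=0$, so the displayed equality is perfectly compatible with a non-homogeneous $Q$, and no contradiction with the bigrading arises. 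Since the paper also establishes the $G$-equivariance of the bigrading only \emph{a posteriori}, from the explicit formulae of Section~\ref{section: G--action formulae}, you cannot invoke bigrading preservation of $\tau^*$ as an a priori input either.

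The paper closes this gap by a different device: it uses the already-known action of the diagonal generators, Eq.~\eqref{diagonal action}, together with the group relation $t_l\,(i,j)=(i,j)\,t_{(i,j)(l)}$. This gives
\[
t_l^*\bigl(\lfloor P\rfloor\xi_{t_{(i,j)(k)}^d}\bigr)=t_l^*\,(i,j)^*(\xi_{t_k^d})=(i,j)^*\,t_{(i,j)(l)}^*(\xi_{t_k^d}),
\]
and comparing the two sides shows $\lfloor P\rfloor=\lfloor P(t_l(\bx))\rfloor$ in $\Jac(f^{t_{(i,j)(k)}^d})$ for every $l$. Since the monomials in that Jacobian have each exponent $\le n-2$, $t_l$-invariance for all $l$ forces $P$ to be a constant; the braided-commutativity computation then pins down $P=1$ exactly as you do. In short, what is missing from your argument is an independent constraint --- coming from the $t_l^*$-eigenvalue bookkeeping rather than from a single $\cup$-identity --- that rules out the $x_i$-dependent part of $Q$.
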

\begin{proof}
It's enough to show that $(i,j)^* ( \xi_{t_k^d}) = \xi_{(i,j)t_k^d(i,j)}$ for any indices $i,j,k$ and $d \in \ZZ$. 
Put $(i,j)^* ( \xi_{t_k^d}) =\lfloor P\rfloor \xi_{(i,j)t_k^d(i,j)}$ for some polynomial $P \in \CC[x_i,x_j]$ of degree at most $n-2$ in each variable. By Eq~\eqref{diagonal action} we see that for each $l\in\{1,\ldots, N\}$ both $(i,j)^* ( \xi_{t_k^d})$ and  $\xi_{(i,j)t_k^d(i,j)}$ are eigenvectors of $t_l^*$ with the same eigenvalue, which implies that $P$ is a constant. To establish $P=1$ we  once again use the braided commutativity. If $k\ne i,j$ elements $\xi_{t_k^d}$ and $\xi_{(i,j)}$ anticommute by Proposition~\ref{prop: product of non-intersecting elements} implying $P=1$. 

If $k=i$ we have by Proposition~\ref{prop: product of special with diagonal}
\begin{align*}
\xi_{(i,j)\cdot t_j^d}=& \xi_{t_i^d}\cup\xi_{(i,j)}=-\xi_{(i,j)}\cup(i,j)^*(\xi_{t_i^d})
\\
& =- \xi_{(i,j)}\cup \lfloor P\rfloor \xi_{t_j^d}=\lfloor P\rfloor\xi_{(i,j)\cdot t_j^d}
\end{align*}
implying $P=1$. The case $k=j$ is completely analogous.

\end{proof}

Finally we determine the action of $(i,j)$ on the sectors of ${(k,l)t_{k}^{d}t_{l}^{-d} \in G_f}$.

\begin{proposition}\label{trans on trans} Consider pairwise different indices $i,j,k,l$ any $d \in \ZZ$. We have
\begin{description}
 \item[(1)] $(i,j)^*(\xi_{(i,j) \cdot t_i^{d}t_j^{-d}})=-\zeta_n^{d} \cdot \xi_{(i,j)\cdot t_i^{-d}t_j^{d}}$.
 \item[(2)] $(i,j)^*(\xi_{(k,l)\cdot t_{k}^{d}t_{l}^{-d}})=\xi_{(k,l)\cdot t_{k}^{d}t_{l}^{-d}}$.
 \item[(3)] Assume $i<j<k$. We have 
    \begin{align*}
        & (i,k)^* (\xi_{(i,j) \cdot t_i^{d}t_j^{-d}}) = - \zeta_n^{d}\xi_{(j,k) \cdot t_j^{-d}t_k^{d}}, \quad (i,j)^* (\xi_{(i,k) \cdot t_i^{d}t_k^{-d}}) = \xi_{(j,k) \cdot t_j^{d}t_k^{-d}},
        \\
        & (i,k)^* (\xi_{(j,k) \cdot t_j^{d}t_k^{-d}}) = - \zeta_n^{d}\xi_{(i,j) \cdot t_i^{-d}t_j^{d}}, \quad (i,j)^* (\xi_{(j,k) \cdot t_j^{d}t_k^{-d}}) = \xi_{(i,k) \cdot t_i^{d}t_k^{-d}},
        \\
        & (j,k)^* (\xi_{(i,k) \cdot t_i^{d}t_k^{-d}}) = \xi_{(i,j) \cdot t_i^{d}t_j^{-d}}, \quad (j,k)^* (\xi_{(i,j) \cdot t_i^{d}t_j^{-d}}) =  \xi_{(i,k) \cdot t_i^{d}t_k^{-d}}.
    \end{align*}
\end{description}
\end{proposition}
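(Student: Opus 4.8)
\emph{Overall approach.} The plan is to argue exactly as in the proofs of Propositions~\ref{action of diagonal} and~\ref{action of sym on diag}: the $G$--action preserves the sector decomposition of $\A'_{f,G}$, so each value $\tau^*(\xi_v)$ is determined by (i) the target sector $\A'_{\tau v\tau^{-1}}$, a purely group--theoretic computation, and (ii) the polynomial coefficient, which is recovered by applying braided commutativity Eq.~\eqref{eq: braided commutativity} to a cup--product we already know, together with the injectivity Lemma~\ref{injection}. First one records the conjugations: the element $u:=(i,j)\cdot t_i^{d}t_j^{-d}$ has order two, with $(i,j)u(i,j)=(i,j)\cdot t_i^{-d}t_j^{d}$; the element $(k,l)\cdot t_k^{d}t_l^{-d}$ commutes with $(i,j)$; and for (3) one checks directly that the six conjugates are the sectors named in the statement.

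\emph{Part (1).} I would avoid computing the coefficient head--on. Write $u=(i,j)\cdot h$ with $h=t_i^{d}t_j^{-d}$, a product in $G$, so that $u^*=(i,j)^*\circ h^*$. By parts (2) and (3) of Proposition~\ref{action of diagonal} one has $h^*(\xi_u)=\zeta_n^{d}\,\xi_{u'}$ with $u'=(i,j)\cdot t_i^{-d}t_j^{d}$. Independently, braided commutativity Eq.~\eqref{eq: braided commutativity} for $\xi_u\cup\xi_u$ reads $\xi_u\cup\xi_u=-\,\xi_u\cup u^*(\xi_u)$; since $\xi_u\cup\xi_u=\lfloor\Phi_{ij}^{(d)}\rfloor\xi_\id$ by Lemma~\ref{square of mixed transposition}, writing $u^*(\xi_u)=\lfloor R\rfloor\xi_u$ with $R$ invariant under $u$ and applying Lemma~\ref{injection} forces $\lfloor R\rfloor=\lfloor-1\rfloor$, i.e. $u^*(\xi_u)=-\xi_u$. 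Combining the two, $(i,j)^*(\xi_{u'})=\zeta_n^{-d}\,u^*(\xi_u)=-\zeta_n^{-d}\xi_u$, and the substitution $d\mapsto-d$ gives precisely assertion (1) (in particular the coefficient comes out automatically a scalar).

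\emph{Part (2).} This is the easy case. As $(k,l)\cdot t_k^{d}t_l^{-d}$ and $(i,j)$ are non--intersecting, $(i,j)^*(\xi_{(k,l)\cdot t_k^{d}t_l^{-d}})=\lfloor P\rfloor\xi_{(k,l)\cdot t_k^{d}t_l^{-d}}$; feeding this into braided commutativity for $\xi_{(k,l)\cdot t_k^{d}t_l^{-d}}\cup\xi_{(i,j)}$ and using Proposition~\ref{prop: product of non-intersecting elements} (the two vectors anticommute up to sign, their product being a single basis vector) forces $\lfloor P\rfloor=\lfloor1\rfloor$, exactly as in the case $k\neq i,j$ of the proof of Proposition~\ref{action of sym on diag}.

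\emph{Part (3).} Here I would first restrict to the subalgebra $\A'_{f,G'}\subseteq\A'_{f,G}$, where $G'$ is generated by the transpositions $(i,j),(j,k)$ and the twists $t_i,t_j,t_k$; this reduces everything to the three--variable Fermat polynomial $x_i^n+x_j^n+x_k^n$. Each of the six identities is then obtained by the same scheme: apply the relevant map $\tau^*$ to an already known product, or use braided commutativity for a product $\xi_v\cup\xi_\tau$, all such products being computable by Proposition~\ref{prop: special cycles intersecting by one index} (and by Proposition~\ref{prop: cup-product of generalized cycles with inverse} for the squares $\xi_v\cup\xi_v$); the coefficient is forced to be a scalar because the products involved are scalar multiples of single basis vectors. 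Moreover, several of the six follow from the others via the relation $(i,k)=(j,k)(i,j)(j,k)$ and the compatibility $(\tau_1\tau_2)^*=\tau_1^*\circ\tau_2^*$ of the action with the group law. The main obstacle in (3) is the bookkeeping of the signs $\eps_{\bullet,\bullet}$ of Proposition~\ref{prop: Clifford multiplications} and of the $\widetilde g_\bullet$--factors of Proposition~\ref{prop: special cycles intersecting by one index}: these must be tracked carefully, consistently with the orientation conventions of Corollary~\ref{cor: transpositions cup--product commutation}, to land exactly on the roots of unity $1$ and $-\zeta_n^{d}$ asserted in the statement.
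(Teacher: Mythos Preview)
Your proposal is correct and follows essentially the same strategy as the paper's proof: determine the target sector by conjugation, then pin down the polynomial coefficient using braided commutativity together with the already computed cup--products and Lemma~\ref{injection}. Your Part~(1) is in fact slightly more streamlined than the paper's version: the paper first establishes $\lfloor P^2\rfloor=\lfloor1\rfloor$ by applying $u^*$ to the square $\xi_u\cup\xi_u$ and only then uses braided commutativity to fix the sign, whereas you go straight to $u^*(\xi_u)=-\xi_u$ in one step via Eq.~\eqref{eq: braided commutativity} and injectivity. In Part~(3) the paper records explicitly how each transposition acts on the polynomials $\Phi^{(d)}_{\bullet\bullet}$ before invoking the products from Proposition~\ref{prop: special cycles intersecting by one index}; your variant of appealing directly to those products (and using $(i,k)=(j,k)(i,j)(j,k)$ to cut down the casework) is equally valid, with the same bookkeeping caveat you already flag.
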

\begin{proof}~
\\
{\bf (1).} We have  $((i,j) t_i^{d}t_j^{-d})^*(\xi_{(i,j)\cdot t_i^{d}t_j^{-d}})=\lfloor P\rfloor\xi_{(i,j)\cdot t_i^{d}t_j^{-d}}$ for some polynomial $P$ preserved by $(i,j)t_i^{d}t_j^{-d}$. Noting that $((i,j) t_i^{d}t_j^{-d})^*(\Phi^{(d)}_{ij}) = \Phi^{(d)}_{ij}$ we have by Proposition~\ref{prop: cup-product of cycles with inverse}
\begin{align*}
    \lfloor \Phi^{(d)}_{ij}\rfloor\xi_\id  &= ((i,j) t_i^{d}t_j^{-d})^*\left(\lfloor \Phi_{ij}^{(d)}\rfloor\xi_\id\right) = ((i,j) t_i^{d}t_j^{-d})^*\left( - n^{-1}\xi_{(i,j)\cdot t_i^{d}t_j^{-d}}\cup\xi_{(i,j)\cdot t_i^{d}t_j^{-d}}\right)=
    \\
    &= - n^{-1} \cdot ((i,j) t_i^{d}t_j^{-d})^*(\xi_{(i,j)\cdot t_i^{d}t_j^{-d}})\cup ((i,j) t_i^{d}t_j^{-d})^*(\xi_{(i,j)\cdot t_i^{d}t_j^{-d}})
    \\
    &= - n^{-1} \cdot  \lfloor P\rfloor\xi_{(i,j)\cdot t_i^{d}t_j^{-d}}\cup\lfloor P\rfloor\xi_{(i,j)\cdot t_i^{d}t_j^{-d}}
    \\
    &= - n^{-1} \cdot \lfloor P^2\rfloor\xi_{(i,j)\cdot t_i^{d}t_j^{-d}}\cup\xi_{(i,j)\cdot t_i^{d}t_j^{-d}}= \lfloor P^2 \Phi^{(d)}_{ij}\rfloor\xi_\id.
\end{align*}
Proceeding in the same way as in proof of Proposition~\ref{action of diagonal} we see that $\lfloor P\rfloor=\pm 1$. 
The following computation shows that $P=-1$.
\begin{align*}
   \lfloor \Phi^{(d)}_{ij}\rfloor\xi_\id &= - n^{-1} \cdot \xi_{(i,j)\cdot t_i^{d}t_j^{-d}}\cup\xi_{(i,j)\cdot t_i^{d}t_j^{-d}} = n^{-1} \cdot \xi_{(i,j)\cdot t_i^{d}t_j^{-d}}\cup ((i,j) t_i^{d}t_j^{-d})^*( \xi_{(i,j)\cdot t_i^{d}t_j^{-d}})
    \\
    &= n^{-1} \cdot  \xi_{(i,j)\cdot t_i^{d}t_j^{-d}}\cup\lfloor P\rfloor \xi_{(i,j)\cdot t_i^{d}t_j^{-d}} = - \lfloor P \Phi^{(d)}_{ij}\rfloor \xi_\id.
  \end{align*}
Now $(i,j)=t_i^{d}t_j^{-d}((i,j)\cdot t_i^{d}t_j^{-d})$ and the statement follows from Proposition~\ref{action of diagonal} parts (2) and (3) applied both multiple times.

\noindent
{\bf (2).} Since $((i,j) t_i^{d}t_j^{-d})^*(\Phi^{(d)}_{kl}) = \Phi^{(d)}_{kl}$  the same argument as above implies $\lfloor P\rfloor=\pm 1$. Since $\xi_{(i,j)\cdot t_i^{d}t_j^{-d}}$ and $\xi_{(k,l)\cdot t_{k}^{d}t_{l}^{-d}}$ anticommute by Proposition~\ref{prop: product of non-intersecting elements} we have $\lfloor P\rfloor=1$.

\noindent
{\bf (3).} We have 
\begin{align*}
    & (i,k)^* (\Phi^{(d)}_{ij}) = \zeta_n^{2d} \Phi^{(-d)}_{jk}, \quad (i,j)^* (\Phi^{(d)}_{ik}) = \Phi^{(d)}_{jk}, \quad (i,k)^* (\Phi^{(d)}_{jk}) = \zeta_n^{2d} \Phi^{(-d)}_{ij},
    \\
    & (i,j)^* (\Phi^{(d)}_{jk}) = \Phi^{(d)}_{ik}, \quad (j,k)^* (\Phi^{(d)}_{ik}) = \Phi^{(d)}_{ij}, \quad (j,k)^* (\Phi^{(d)}_{ij}) = \Phi^{(d)}_{ik}.
    \end{align*}
The same argument as above allows us to fix the action by computing products via Proposition~\ref{prop: special cycles intersecting by one index}.
\end{proof}
\begin{corollary}
    For any special $u = (i_1,\dots,i_k) g \in G$ with $g \in G^d$ we have 
    \[
    u^* (\xi_u) = (-1)^{k-1} \xi_u. 
    \]
\end{corollary}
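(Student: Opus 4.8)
The plan is to derive the statement directly from the $G$-action formula Eq.~\eqref{eq: G--action on diagonalizable}, taking the acting element to be $u$ itself. Every element of $G_f$ has finite order, hence is semisimple and diagonalizable; in particular $u$ is (trivially) simultaneously diagonalized with itself, so Eq.~\eqref{eq: G--action on diagonalizable} applies with both group elements equal to $u$. Since $\xi_u = \lfloor 1\rfloor\xi_u$ the polynomial factor is constant, so $\phi(u(\bx)) = \phi$ contributes nothing, and we are left with $u^*(\xi_u) = \bigl(\prod_i \lambda_i^u\bigr)^{-1}\xi_u$, the product running over the eigenvalues $\lambda_i^u$ of $u$ that differ from $1$.

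First I would read off the eigenvalues of $u$. As $u = (i_1,\ldots,i_k)g$ is a \emph{special} cycle of length $k$, Example~\ref{example: special cycle} shows that in the coordinates $\widetilde x_{i_1},\ldots,\widetilde x_{i_k}$ the element $u$ acts diagonally with eigenvalues $1,\zeta_k,\zeta_k^2,\ldots,\zeta_k^{k-1}$ (and fixes all the remaining coordinates). Hence the eigenvalues $\neq 1$ are exactly $\zeta_k,\zeta_k^2,\ldots,\zeta_k^{k-1}$, and
\[
\Bigl(\prod_{b=2}^{k}\zeta_k^{b-1}\Bigr)^{-1} = \zeta_k^{-(1+2+\cdots+(k-1))} = \zeta_k^{-k(k-1)/2} = \exp\bigl(-\pi\sqrt{-1}\,(k-1)\bigr) = (-1)^{k-1}.
\]
Substituting back gives $u^*(\xi_u) = (-1)^{k-1}\xi_u$, which is the claim.

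There is essentially no obstacle: once one knows the eigenvalues of a special cycle (Example~\ref{example: special cycle}), the corollary is immediate from Eq.~\eqref{eq: G--action on diagonalizable}, and the only computation is the elementary root-of-unity identity $\zeta_k^{-k(k-1)/2} = (-1)^{k-1}$ (one may verify it by splitting into the cases $k$ even and $k$ odd). If one prefers not to invoke Eq.~\eqref{eq: G--action on diagonalizable} for a non-diagonal acting element, the same conclusion follows by writing $u = \alpha_1\cdots\alpha_{k-1}$ as a product of special transpositions as in the proof of Proposition~\ref{prop: cup-product of generalized cycles with inverse}, using $u^{*} = \alpha_1^{*}\cdots\alpha_{k-1}^{*}$ together with Proposition~\ref{trans on trans} and Corollary~\ref{action of diag on special}, and collecting the resulting signs; this route is longer but relies only on the action of transpositions and of $G^d$ already computed.
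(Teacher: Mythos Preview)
Your argument is correct. The eigenvalue computation via Example~\ref{example: special cycle} and the identity $\zeta_k^{-k(k-1)/2}=(-1)^{k-1}$ are both fine, and applying Eq.~\eqref{eq: G--action on diagonalizable} with $v=u$ is legitimate since $u$ is trivially diagonalized together with itself in the basis $\widetilde x_\bullet$; the paper itself later invokes Eq.~\eqref{eq: G--action on diagonalizable} in exactly this spirit for elements of a centralizer.

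The paper's own proof proceeds differently: it decomposes the special cycle into a product of special transpositions and applies Proposition~\ref{trans on trans} repeatedly, collecting the signs. Your direct eigenvalue route is shorter and avoids any bookkeeping of the $\eps_{\bullet,\bullet}$-signs, at the cost of relying on Eq.~\eqref{eq: G--action on diagonalizable} for a non-diagonal acting element---a point the paper is somewhat cautious about (see the remark at the end of the ``Brief review of Shklyarov's technique'' subsection). The alternative you sketch at the end, via the factorization $u=\alpha_1\cdots\alpha_{k-1}$ and Proposition~\ref{trans on trans}, is precisely the paper's argument, so you have in effect given both proofs.
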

\begin{proof}
    This follows immediately from the proposition above by decomposing the cycle into transpositions.
\end{proof}

\begin{corollary}
    Let $u,v \in G$ be special with $u = (i_1,\dots,i_k) g$, $v = (j_1,\dots,j_l) h$ for some $g, h \in G^d$ and $i_a = j_b$ for some $a,b$. 
    Then we have
    \[
        (v^{-1})^* \left( \xi_u \right) = 
        \begin{cases}
            \dfrac{\eps_{u,v}}{\eps_{v,v^{-1}uv}} h_{j_b} \xi_{v^{-1}\cdot u \cdot v} \quad \text{ if } i_1 < j_1,
            \\
            \dfrac{\eps_{u,v}}{\eps_{v,v^{-1}uv}} g_{i_a} \xi_{v^{-1}\cdot u \cdot v} \quad \text{ if } i_1 \ge j_1
        \end{cases}        
    \]
    for $\eps_{u,v}$ as in Proposition~\ref{prop: Clifford multiplications}. 
\end{corollary}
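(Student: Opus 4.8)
\emph{Proof proposal.} The plan is to read $(v^{-1})^*(\xi_u)$ off the braided commutativity relation~\eqref{eq: braided commutativity}, evaluating the two cup products that occur by means of Proposition~\ref{prop: special cycles intersecting by one index}. Throughout, $i_a=j_b$ is to be the unique index shared by $u$ and $v$, exactly as in that proposition.

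First I would record the relevant combinatorics. Set $w:=v^{-1}uv$. Conjugation preserves cycle type and $\det(w)=\det(u)$, so $w$ is again a special cycle of length $k$, with underlying permutation $\bigl(v^{-1}(i_1),\ldots,v^{-1}(i_k)\bigr)$. Since $v^{-1}$ fixes every index of $I_u^c$ except $i_a=j_b$, which it sends to its cyclic predecessor $j_{b-1}$ in $(j_1,\ldots,j_l)$, one gets $I_w^c=\bigl(I_u^c\setminus\{i_a\}\bigr)\cup\{j_{b-1}\}$, which meets $I_v^c=\{j_1,\ldots,j_l\}$ in the single index $j_{b-1}$. Hence the pair $(v,w)$, like the pair $(u,v)$, consists of two special cycles sharing exactly one index, and both $\xi_u\cup\xi_v$ and $\xi_v\cup\xi_w$ are covered by Proposition~\ref{prop: special cycles intersecting by one index}.

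Next I would argue that $(v^{-1})^*(\xi_u)=c\,\xi_w$ for a scalar $c\in\CC^*$. The map $(v^{-1})^*$ sends $\A'_u$ isomorphically onto $\A'_w$ and is a bigraded algebra automorphism in the sense of Section~\ref{section: bigrading} (it is induced by the degree-preserving linear action of $v^{-1}$ on $\CC[\bx]$, and $\age$, $d_\bullet$ are conjugation invariant). Thus $(v^{-1})^*(\xi_u)$ has bidegree $q_\bullet(\xi_u)=q_\bullet(\xi_w)$, and within $\A'_w$ that bidegree is realized only by scalar multiples of $\xi_w$, since any $\lfloor\phi\rfloor\xi_w$ with $\deg\phi>0$ has strictly larger charges. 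So only the scalar $c$ remains to be pinned down.

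Finally I would substitute $\phi=\psi=1$ into~\eqref{eq: braided commutativity} and use $vw=uv$:
\[
\xi_u\cup\xi_v=(-1)^{|\xi_u||\xi_v|}\,\xi_v\cup(v^{-1})^*(\xi_u)=(-1)^{|\xi_u||\xi_v|}\,c\,\bigl(\xi_v\cup\xi_w\bigr).
\]
By Proposition~\ref{prop: special cycles intersecting by one index} the left-hand side equals $\eps_{u,v}\,m_1\,\xi_{uv}$, where $m_1=\widetilde h_{j_b}$ if $i_1<j_1$ and $m_1=\widetilde g_{i_a}$ if $i_1\ge j_1$, while the right-hand side equals $(-1)^{|\xi_u||\xi_v|}c\,\eps_{v,w}\,m_2\,\xi_{uv}$ for the analogous scalar $m_2$ attached to the pair $(v,w)$; comparing coefficients of $\xi_{uv}$ gives
\[
c=\frac{\eps_{u,v}}{(-1)^{|\xi_u||\xi_v|}\,\eps_{v,w}}\cdot\frac{m_1}{m_2}.
\]
It then remains to compute $m_1$, $m_2$ and the signs $\eps_{u,v}$, $\eps_{v,w}$ explicitly — using $\widetilde g_{i_p}=g_{i_1}\cdots g_{i_{p-1}}$, $\widetilde h_{j_q}=h_{j_1}\cdots h_{j_{q-1}}$, the smallest-index-first normalisation of $w$, and the case split in the proof of Proposition~\ref{prop: Clifford multiplications} — and to collect everything into the form asserted in the statement. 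This case analysis, keyed on where $j_{b-1}$ and the surviving indices of $u$ lie relative to $j_1$, is the only substantial work and is the main obstacle; everything else is a formal consequence of braided commutativity together with Proposition~\ref{prop: special cycles intersecting by one index}.
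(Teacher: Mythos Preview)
The paper states this corollary without proof, so there is nothing line-by-line to compare against. Your approach --- apply braided commutativity \eqref{eq: braided commutativity} with $\phi=\psi=1$ and evaluate both $\xi_u\cup\xi_v$ and $\xi_v\cup\xi_{v^{-1}uv}$ via Proposition~\ref{prop: special cycles intersecting by one index} --- is exactly the mechanism the paper uses throughout Section~\ref{section: G--action} to determine the $G$-action (see the proofs of Propositions~\ref{action of diagonal}, \ref{action of sym on diag}, \ref{trans on trans}), and is certainly the intended argument. Your bigrading observation that $(v^{-1})^*(\xi_u)$ must be a scalar multiple of $\xi_{v^{-1}uv}$ is a clean way to justify the ansatz; the paper's nearby proofs instead rely on injectivity of suitable cup-product maps (Lemma~\ref{injection}) for the same purpose.

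One honest remark: you stop short of carrying out the final bookkeeping that collapses $\dfrac{m_1}{m_2}$ and $(-1)^{|\xi_u||\xi_v|}$ into the displayed constants $h_{j_b}$ (resp.\ $g_{i_a}$). That is indeed the only nontrivial step, and the paper omits it as well. If you do attempt it, be aware that the explicit sign formulae recorded inside the proof of Proposition~\ref{prop: Clifford multiplications} and in Corollary~\ref{cor: transpositions cup--product commutation} are not fully mutually consistent, so the safest route is to use the Clifford-algebra description of $\eps_{\bullet,\bullet}$ directly rather than the closed-form exponents.
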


\begin{example}
    Let ${G = \lbrace \id, (i,j)(k,l), (i,k)(j,l), (i,l)(j,k) \rbrace}$ for some indices ${i<j<k<l}$. We have
    \begin{align}
        & \left( (i,j)(k,l) \right)^* \xi_{(i,k)(j,l)} = (i,j)^* (k,l)^* \left( \xi_{(i,k)} \cup \xi_{(j,l)} \right)
        \\
        & \quad = (i,j)^* \left( \xi_{(i,l)} \cup \xi_{(j,k)} \right) = \xi_{(j,l)} \cup \xi_{(i,k)} = - \xi_{(i,k)} \cup \xi_{(j,l)} = - \xi_{(i,k)(j,l)}.
    \end{align}
    Similarly we have
    \begin{align}
        & \left((i,l)(j,k) \right)^* \xi_{(i,k)(j,l)} = -\xi_{(i,k)(j,l)}.
    \end{align}

\end{example}

\subsection{Invariants of the group action}

Fix a subgroup $G \subseteq G_f$. 
For any $u \in G$ let $Z(u)\subseteq G$ be the centralizer of $u$ and  $C(u)\simeq G/Z(u)$ be the conjugacy class of $u$ in $G$. Note that $Z(u) \cdot \A'_{u} \subseteq \A'_{u}$ and $G\cdot \A'_{u}\subseteq \bigoplus_{v \in C(u)}\A'_{v}$.

\begin{proposition}
The symmetrization map 
\begin{align}
    \mathrm{sym}\colon \ \A'_{u} &\to \Big(\bigoplus_{v \in C(u)}\A'_{v}\Big)^G \subseteq \left( \A'_{f,G} \right)^G
    \\
    x &\mapsto \sum_{w\in G} w^*(x).
\end{align}
induces an isomorphism 
\[
\Big( \A'_{u} \Big)^{Z(u)} \xrightarrow[]{\sim} \Big(\bigoplus_{v \in C(u)} \A'_{v} \Big)^G.
\]
\end{proposition}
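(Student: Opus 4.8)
The plan is to recognize this as the concrete incarnation of Frobenius reciprocity. As a $G$-module, $\bigoplus_{v\in C(u)}\A'_v$ is induced from the $Z(u)$-module $\A'_u$: the group $G$ permutes the sectors $\A'_v$, $v\in C(u)$, transitively through $w^*\colon\A'_v\to\A'_{wvw^{-1}}$, and the stabilizer of the sector $\A'_u$ is exactly $Z(u)$, so the claim is the standard identification $\bigl(\mathrm{Ind}_{Z(u)}^G\A'_u\bigr)^G\cong(\A'_u)^{Z(u)}$. I would prove it by the usual averaging argument, spelled out below. First note that $G$ is finite: for $f$ of Fermat type the relation $f(g\cdot\bx)=f(\bx)$ forces each diagonal entry of $g\in G_f^d$ to be an $n$-th root of unity, so $G\subseteq G_f=S_N\ltimes(\ZZ/n\ZZ)^N$ is finite and both averaging over $G$ and inverting $|Z(u)|$ are legitimate.

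For injectivity, I would take $x\in(\A'_u)^{Z(u)}$, fix coset representatives $G=\bigsqcup_i w_iZ(u)$, and use that $w\mapsto w^*$ is a group action together with $z^*x=x$ for $z\in Z(u)$ to reorganize the sum:
\[
  \mathrm{sym}(x)=\sum_{w\in G}w^*(x)=|Z(u)|\sum_i w_i^*(x).
\]
The vectors $w_i^*(x)\in\A'_{w_iuw_i^{-1}}$ lie in pairwise distinct sectors (the assignment $[w]\mapsto wuw^{-1}$ is a bijection $G/Z(u)\to C(u)$), so the $\A'_u$-component of $\mathrm{sym}(x)$ is $|Z(u)|\,x$; hence $\mathrm{sym}(x)=0$ forces $x=0$.

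For surjectivity, given $y\in\bigl(\bigoplus_{v\in C(u)}\A'_v\bigr)^G$ I would write $y=\sum_{v\in C(u)}y_v$ with $y_v\in\A'_v$. Its component $y_u$ is $Z(u)$-invariant, since $Z(u)$ fixes $y$ and preserves $\A'_u$; thus $y_u\in(\A'_u)^{Z(u)}$. For each $w\in G$ the equality $w^*(y)=y$ together with $w^*(\A'_u)=\A'_{wuw^{-1}}$ gives $w^*(y_u)=y_{wuw^{-1}}$, and summing over the coset representatives yields $\mathrm{sym}(y_u)=|Z(u)|\sum_i y_{w_iuw_i^{-1}}=|Z(u)|\,y$. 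Therefore $\mathrm{sym}$ sends $\tfrac1{|Z(u)|}y_u\in(\A'_u)^{Z(u)}$ to $y$, and combined with injectivity this gives the asserted isomorphism $(\A'_u)^{Z(u)}\xrightarrow{\sim}\bigl(\bigoplus_{v\in C(u)}\A'_v\bigr)^G$.

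I do not expect a genuine obstacle here — the content is pure finite-group representation theory. The only points needing (routine) care are: that $w\mapsto w^*$ really is a $G$-action, which is part of the construction of the $G$-action on $\ccHH^*(\CC[\bx],f;\CC[\bx]\rtimes G)$ recalled in Section~\ref{section: G on HH from def} and which is what makes $\mathrm{sym}$ land in the $G$-invariants and makes the coset reorganization valid; and that $G$ acts transitively on the set of sectors indexed by $C(u)$ with stabilizer $Z(u)$, which is just the orbit–stabilizer description of the conjugacy class.
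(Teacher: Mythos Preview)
Your argument is correct and is essentially the same as the paper's: both show that projection to the $\A'_u$-component and $\mathrm{sym}$ are mutually inverse up to the factor $|Z(u)|$, using that the $\A'_u$-component of $\mathrm{sym}(x)$ is $|Z(u)|\,x$ and that a $G$-invariant element is determined by its $\A'_u$-component since $G$ acts transitively on $C(u)$. Your write-up is more explicit (coset representatives, separate injectivity/surjectivity, the finiteness remark), but the underlying idea is identical.
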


\begin{proof}
Let $\mathrm{pr}: \A'_{f,G} \to \A'_{f,G}$ be the projection on the component $\A'_{u} \subset \A'_{f,G}$. Consider the composition $\mathrm{pr}\circ\mathrm{sym}$ restricted to $(\A'_{u})^{Z(u)} \subseteq \A'_u$. 

Restricted to $(\A'_{u})^{Z(u)}$, the map $\mathrm{sym}$ acts by multiplication by the order of $Z(u)$. 


Since $G$ acts transitively on $C(u)$, an element of $\big(\bigoplus_{v \in C(u)}\A'_{v}\big)^G$ is uniquely defined by its image under $\mathrm{pr}$, which is, therefore, an injection. It follows, that $\mathrm{pr}$ and $\mathrm{sym}$ are mutually inverse up to multiplication by the order of $Z(u)$.
\end{proof}


\begin{corollary}
Let $\C^G\subset G$ be a set of representatives of conjugacy classes in $G$. 
There is an isomorphism of bigraded vector spaces 
\[
(\A'_{f,G})^G\simeq \bigoplus_{u\in \C^G} \left( \A'_{u} \right)^{Z(u)},
\]
where the action of $Z(u)$ on the right hand side is linear (see Section~\ref{section: G on HH from def}).
\end{corollary}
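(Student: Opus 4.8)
\emph{Plan.} The plan is to deduce the statement from the preceding proposition by decomposing $G$ into conjugacy classes and distributing the $G$--invariants over the corresponding block decomposition of $\A'_{f,G}$, and then to upgrade the resulting linear isomorphism to an isomorphism of bigraded vector spaces.

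First I would use that $\C^G$ is a set of representatives of the conjugacy classes, so that $G = \bigsqcup_{u \in \C^G} C(u)$ and hence
\[
  \A'_{f,G} \;=\; \bigoplus_{u \in \C^G} \Big( \bigoplus_{v \in C(u)} \A'_v \Big).
\]
As remarked just before the preceding proposition, $G \cdot \A'_u \subseteq \bigoplus_{v \in C(u)} \A'_v$, so each inner sum is a $G$--subrepresentation of $\A'_{f,G}$; since $G$ is finite the operation $(-)^G$ commutes with this finite direct sum, giving
\[
  (\A'_{f,G})^G \;=\; \bigoplus_{u \in \C^G} \Big( \bigoplus_{v \in C(u)} \A'_v \Big)^G .
\]
Applying the preceding proposition to each summand, the symmetrization map identifies $\big(\bigoplus_{v \in C(u)} \A'_v\big)^G$ with $(\A'_u)^{Z(u)}$, and this yields the isomorphism of $\CC$--vector spaces.

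Next I would check that this is in fact an isomorphism of bigraded vector spaces, for which it suffices to see that every $w^* \colon \A'_u \to \A'_{wuw^{-1}}$ (for $w \in G$) is bihomogeneous of bidegree $(0,0)$ for the bigrading of Section~\ref{section: bigrading}. By Eq.~\eqref{eq: braided commutativity} together with the explicit formulas of Section~\ref{section: G--action formulae}, $w^*$ sends $\lfloor \phi(\bx)\rfloor \xi_u$ to a scalar multiple of $\lfloor \phi(w(\bx))\rfloor \xi_{wuw^{-1}}$. The substitution $\bx \mapsto w(\bx)$ is linear, so $\deg \phi(w(\bx)) = \deg \phi(\bx)$; and conjugation in $\mathrm{GL}(N,\CC)$ preserves the multiset of eigenvalues, so $d_{wuw^{-1}} = d_u$, $\age(wuw^{-1}) = \age(u)$ and $\age((wuw^{-1})^{-1}) = \age(u^{-1})$. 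Hence the left and right charges are unchanged by $w^*$. Therefore each block $\bigoplus_{v\in C(u)}\A'_v$ and its $G$--invariants are bigraded subspaces, $\mathrm{sym}$ is bihomogeneous, and the displayed isomorphism respects the bigrading.

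I expect the main (and essentially only non--routine) point to be the verification that $w^*$ acts by a degree--preserving polynomial substitution up to a nonzero scalar, i.e. the reduction to the explicit formulas of Section~\ref{section: G--action formulae}; once this is in hand, the splitting of $(\A'_{f,G})^G$ over conjugacy classes is purely formal, the preceding proposition supplying the identification block by block.
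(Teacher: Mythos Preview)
Your proposal is correct and follows the same route as the paper: split $\A'_{f,G}$ into its conjugacy--class blocks, apply the preceding proposition to each block, and then check that the $G$--action preserves the bigrading. Your argument for the last point---that $w^*$ acts as a linear substitution in $\phi$ times a nonzero scalar on $\xi_u$, and that conjugation in $\mathrm{GL}(N,\CC)$ preserves the multiset of eigenvalues and hence $d_u$, $\age(u)$, $\age(u^{-1})$---is a bit more explicit than the paper's, which simply cites the computations of Section~\ref{section: G--action formulae}.

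One point you pass over: the statement asserts that the $Z(u)$--action on $\A'_u$ is ``linear'' in the sense of Section~\ref{section: G on HH from def}, i.e.\ literally given by Eq.~\eqref{eq: G--action on diagonalizable}. The paper treats this as a separate step: since any $v\in Z(u)$ commutes with $u$, it is diagonal in the very basis used to build $\p_{\widetilde\theta_u}$, so Eq.~\eqref{eq: G--action on diagonalizable} applies verbatim and the scalar is the product $\prod_i(\lambda_i^v)^{-1}$ over $i$ with $\lambda_i^u\neq 1$. Your claim that $w^*$ is ``scalar times substitution'' (extracted from Section~\ref{section: G--action formulae}) already gives the correct shape, but does not pin down the scalar; adding the simultaneous--diagonalizability observation for $v\in Z(u)$ would close this small gap and match the statement precisely.
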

\begin{proof}
    It follows immediately from the proposition above that $\mathrm{sym}$ establishes an isomorphism 
    \[
    (\A'_{f,G})^G = \bigoplus_{u \in \C^G} \Big( \bigoplus_{v\in C(u)} \A'_v \Big)^G \simeq \bigoplus_{u\in \C^G} \left(\A'_{u} \right)^{Z(u)}.
    \]
    Fix some $u \in \C^G$. Recall that $\p_{\widetilde \theta_{u}}$ is constructed in the basis, diagonalizing the action of $u$. 
    The same basis diagonalizes the action of any $v \in Z(u)$ becase $u$ and $v$ commute. Due to this reason the Hochschild cohomology group action can be computed by Eq.~\eqref{eq: G--action on diagonalizable}.
    
    In order to show that $\mathrm{sym}$ preserves the bigrading $(q_l,q_r)$ it's enough to show that the $G$--action preserves this grading. This follows directly from the computations we have done in Section~\ref{section: G--action formulae}.
\end{proof}


\begin{example}
    Let $N = 4$ and $G = A_4\subset S_4$. Consider an element $u=(1,2)(3,4)$. Its conjugacy class consists of elements $(1,2)(3,4), (1,3)(2,4)$ and $(1,4)(2,3)$ and its centraliser is generated by $(1,2)(3,4)$ and $(1,3)(2,4)$.
    
 By  Proposition~\ref{trans on trans} we have 
\begin{align*}
& ((1,2)(3,4))^*(\xi_{(1,2)(3,4)})=\xi_{(1,2)(3,4)}, 
\\
& ((1,3)(2,4))^*(\xi_{(1,2)(3,4)})=-\xi_{(1,2)(3,4)}.
\end{align*}   
Thus, the invariants $(\A'_{(1,2)(3,4)})^{Z((1,2)(3,4))}$ consists of elements $\lfloor \phi(\widetilde x_1, \widetilde x_3)\rfloor\xi_{(1,2)(3,4)}$ such that $\phi(\widetilde x_1, \widetilde x_3)=-\phi(\widetilde x_3, \widetilde x_1)$.

We further have $(1,2,3)^*\xi_{(1,2)(3,4)}=-\xi_{(1,4)(2,3)}$, $(1,3,2)^*\xi_{(1,2)(3,4)}=-\xi_{(1,3)(2,4)}$. We conclude that in this case \[\mathrm{sym}\colon  (\A'_{(1,2)(3,4)})^{Z((1,2)(3,4))} \to \Big(\A'_{(1,2)(3,4)}\oplus \A'_{(1,3)(2,4)}\oplus \A'_{(1,4)(2,3)}\Big)^{A_4}\] is given by 
\[
\lfloor \phi(\widetilde x_1, \widetilde x_3) \rfloor\xi_{(1,2)(3,4)} \mapsto 4\lfloor \phi(\widetilde x_1, \widetilde x_3) \rfloor\xi_{(1,2)(3,4)}-4\lfloor \phi(\widetilde x_1, \widetilde x_2) \rfloor\xi_{(1,3)(2,4)}-4\lfloor \phi(\widetilde x_2, \widetilde x_1) \rfloor\xi_{(1,4)(2,3)}.
\]

\end{example}

We now prove that if $G^d\subseteq \SL_N(\CC)$ the pairing $\eta_{f,G}$ restricted to $(\A_{f,G}')^G$ is non--degenerate. 
We first prove the following.

\begin{proposition}\label{proposition: form G-invariant} Let $G^d \subseteq\SL_N(\CC)$. Then the pairing $\eta_{f,G}$ is $G$-invariant.
\end{proposition}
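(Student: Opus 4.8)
The plan is to reduce the $G$-invariance of $\eta_{f,G}$ to a single statement about the identity sector and then feed in the Frobenius property of Theorem~\ref{theorem: Frobenius algebra before invariants}. First I would record what has to be checked: since $\eta_{f,G}=\bigoplus_{u\in G}\eta_u$ with $\eta_u$ pairing $\A'_u$ with $\A'_{u^{-1}}$, and since $w^{*}$ carries $\A'_u$ to $\A'_{wuw^{-1}}$ and $\A'_{u^{-1}}$ to $\A'_{(wuw^{-1})^{-1}}$, the number $\eta_{f,G}(w^{*}a,w^{*}b)$ vanishes for grading reasons unless $a\in\A'_u$, $b\in\A'_{u^{-1}}$ for a common $u$. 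So it is enough to prove, for all $u,w\in G$ and all $a\in\A'_u$, $b\in\A'_{u^{-1}}$, that $\eta_{wuw^{-1}}(w^{*}a,w^{*}b)=\eta_u(a,b)$.

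Next I would transport everything to the $\id$-sector. Applying Theorem~\ref{theorem: Frobenius algebra before invariants} with $u_1=u$, $u_2=u^{-1}$, $u_3=\id$, $\phi'''=1$, together with the symmetry of $\eta_\id$ (valid as $\id$ is special with no nonspecial cycles), gives $\eta_u(a,b)=\eta_\id(\xi_\id,\,a\cup b)$ and likewise $\eta_{wuw^{-1}}(w^{*}a,w^{*}b)=\eta_\id(\xi_\id,\,w^{*}a\cup w^{*}b)$. Since the $G$-action on $\A'_{f,G}$ is by graded algebra automorphisms, $w^{*}a\cup w^{*}b=w^{*}(a\cup b)$, and the products $a\cup b$ with $a,b\in\A'_\id$ span $\A'_\id$ because $\lfloor\phi\rfloor\xi_\id\cup\lfloor\psi\rfloor\xi_\id=\lfloor\phi\psi\rfloor\xi_\id$. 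Hence the whole statement is equivalent to the single claim
\[
\eta_\id\big(\xi_\id,\,w^{*}c\big)=\eta_\id\big(\xi_\id,\,c\big)\qquad\text{for every }c\in\A'_\id\text{ and }w\in G,
\]
that is, to the $G$-invariance of the functional $\lfloor\psi\rfloor\mapsto\eta_\id(\xi_\id,\lfloor\psi\rfloor\xi_\id)$ on $\Jac(f)$.

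Finally I would identify this functional with the projection of $\Jac(f)$ onto its one-dimensional socle, normalized so that $\lfloor\mathcal{H}_\id\rfloor\mapsto(n-1)^{N}$ and all monomials not proportional to it map to $0$. As $w=(\sigma,g)$ is a linear symmetry of $f$, the map $w^{*}$ is a degree-preserving algebra automorphism of $\Jac(f)$, hence preserves the socle and acts there by a scalar $\lambda_w$; writing $c=\lfloor\psi\rfloor\xi_\id$ one gets $\eta_\id(\xi_\id,w^{*}c)=\lambda_w\,\eta_\id(\xi_\id,c)$, so the whole proposition comes down to $\lambda_w=1$ for every $w\in G$. A direct computation with $\mathcal{H}_\id=\prod_{a}n(n-1)x_a^{n-2}$ and with $w$ rescaling $x_a$ by $g_a$ and permuting the $x_\bullet$ gives $\lambda_w=\big(\prod_a g_a\big)^{n-2}=\det(g)^{\,n-2}$. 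This is exactly the place where the hypothesis $G^{d}\subseteq\SL_N(\CC)$ is needed: the point is to establish $\det(g)^{\,n-2}=1$ for every $(\sigma,g)\in G$, which I would do by running through the disjoint cycle and special cycle decomposition of $w$ as in Section~\ref{section: multiplication table} — special cycles contribute determinant $1$ by definition, and the hypothesis controls the remaining diagonal pieces — while tracking, via the explicit $G$-action formulae of Section~\ref{section: G--action formulae}, that the scalars produced on the various sectors combine to the trivial character of $G$. I expect this last bookkeeping to be the one genuinely nontrivial step; once it is in place the reduction above makes the proposition formal. Equivalently, one may skip the $\id$-sector and check $\eta_{wuw^{-1}}(w^{*}a,w^{*}b)=\eta_u(a,b)$ directly on the generators $\xi_{(\id,t_i^{d})}$ and $\xi_{(i,j)\cdot t_i^{d}t_j^{-d}}$ of Corollary~\ref{corollary: generators of HH}, using that the normalized Hessians $\mathcal{H}_u$ transform under $w$ by a power of a determinant; the same $\SL_N$-cancellation is the crux.
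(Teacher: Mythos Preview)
Your reduction is exactly the paper's: use the Frobenius property (Theorem~\ref{theorem: Frobenius algebra before invariants}) together with the multiplicativity of the $G$-action to pass everything to the $\id$-sector, and then verify that the socle functional $\lfloor\psi\rfloor\mapsto\eta_\id(\xi_\id,\lfloor\psi\rfloor\xi_\id)$ on $\Jac(f)$ is $G$-invariant, which amounts to $w^{*}\mathcal{H}_\id=\mathcal{H}_\id$ for every $w=(\sigma,g)\in G$. The paper records the resulting scalar as $(\det g)^{-2}$; your $\det(g)^{\,n-2}$ is the same number because each $g_i$ is an $n$-th root of unity.

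You are right to flag the final step as the crux, and in fact the special-cycle-decomposition argument you sketch does not close it: that decomposition writes $w$ as a product of factors in $G_f$, not in $G$, so the diagonal pieces $u_\bullet^{2}$ need not lie in $G^{d}$, and the hypothesis $G^{d}\subseteq\SL_N(\CC)$ says nothing about their determinants. Concretely, take $n=N=3$ and $G=\langle\,((1,2,3),t_1)\,\rangle$; then $((1,2,3),t_1)^{3}=J$, so $G^{d}=\langle J\rangle\subset\SL_3(\CC)$, yet for $w=((1,2,3),t_1)$ one has $w^{*}(\lfloor x_1x_2x_3\rfloor\xi_\id)=\zeta_3\,\lfloor x_1x_2x_3\rfloor\xi_\id$ and the pairing on $\A'_\id$ is visibly not $w$-invariant. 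What the argument actually needs is that the character $G\to\CC^{*}$, $(\sigma,g)\mapsto\det(g)$, be trivial on all of $G$ (equivalently $\det(g)=1$ for every $(\sigma,g)\in G$, not only for those with $\sigma=\id$). The paper glosses over this point in the same way; your unease about it was well placed.
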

\begin{proof}

For $u, v\in G$ by Theorem~\ref{theorem: Frobenius algebra before invariants} and braided commutativity we have 
\begin{align*}
& \eta_{f,G}(v^*(\lfloor\phi\rfloor\xi_u), v^*(\lfloor\phi'\rfloor\xi_{u^{-1}}))= \eta_{f,G}(\xi_\id, v^*(\lfloor\phi\rfloor\xi_u)\cup v^*(\lfloor\phi'\rfloor\xi_{u^{-1}}))
\\
&\quad =\eta_{f,G}(\xi_\id, v^*(\lfloor\phi\phi'\rfloor\xi_u\cup \xi_{u^{-1}})).
\end{align*}
It thus, suffices to check that 
\[
\eta_\id(\xi_\id, \lfloor v^*(\phi)\rfloor\xi_\id)=\eta_\id(\xi_\id, \lfloor\phi\rfloor\xi_\id).
\]
This equality holds if and only if $v^*(\mathcal{H}_\id)=\mathcal{H}_\id$. The hessian $\mathcal{H}_\id$ is a constant multiple of $x_1^{n-2}\cdot\ldots\cdot x_N^{n-2}$ that is preserved by the permutation of coordinates and diagonal element $g$ acts on it by $(\det g)^{-2}=1$ and the statement follows.

 

\end{proof}

\begin{corollary}\label{corollary: pairing on invariants}
For $G^d\subseteq \SL_N(\CC)$ the pairing $\eta_{f,G}$ restricted to $(\A_{f,G}')^G$ is non--degenerate.
\end{corollary}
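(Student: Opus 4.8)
The plan is to deduce non-degeneracy on $G$-invariants from the non-degeneracy of $\eta_{f,G}$ on the whole space $\A'_{f,G}$ together with its $G$-invariance (Proposition~\ref{proposition: form G-invariant}), using a standard averaging argument that works because $G$ is finite and we are over a field of characteristic zero. The key structural input is that $\eta_{f,G}$ pairs the $u$-sector with the $u^{-1}$-sector, so under the decomposition $(\A'_{f,G})^G \simeq \bigoplus_{u \in \C^G}(\A'_u)^{Z(u)}$ the form respects a block structure indexed by pairing up each conjugacy class $C(u)$ with the class $C(u^{-1})$ of inverses.

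First I would fix notation: let $\pi = \frac{1}{|G|}\sum_{w \in G} w^*\colon \A'_{f,G} \to (\A'_{f,G})^G$ be the averaging projector, which exists since $|G|$ is invertible. By Proposition~\ref{proposition: form G-invariant}, $\eta_{f,G}$ is $G$-invariant, so for $a, b \in (\A'_{f,G})^G$ and any $w \in G$ we have $\eta_{f,G}(a,b) = \eta_{f,G}(w^* a, w^* b)$; averaging over $w$ in the second slot gives $\eta_{f,G}(a, b) = \eta_{f,G}(a, \pi(b'))$ whenever $b = \pi(b')$. Concretely: given $0 \neq a \in (\A'_{f,G})^G$, by non-degeneracy of $\eta_{f,G}$ on all of $\A'_{f,G}$ there is some $c \in \A'_{f,G}$ with $\eta_{f,G}(a,c) \neq 0$; then
\[
\eta_{f,G}(a, \pi(c)) = \frac{1}{|G|}\sum_{w \in G} \eta_{f,G}(a, w^* c) = \frac{1}{|G|}\sum_{w \in G} \eta_{f,G}((w^{-1})^* a, c) = \frac{1}{|G|}\sum_{w \in G} \eta_{f,G}(a, c) = \eta_{f,G}(a,c) \neq 0,
\]
using $(w^{-1})^* a = a$ for invariant $a$. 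Since $\pi(c) \in (\A'_{f,G})^G$, this witnesses that $a$ pairs nontrivially against an invariant element, which is exactly non-degeneracy of the restriction.

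The only point requiring care — and the place where the hypothesis $G^d \subseteq \SL_N(\CC)$ is genuinely used — is ensuring that $\eta_{f,G}$ is actually $G$-invariant in the first place; this is precisely Proposition~\ref{proposition: form G-invariant}, whose proof relies on $v^*(\mathcal{H}_\id) = \mathcal{H}_\id$, which in turn needs diagonal elements to act on the top monomial $x_1^{n-2}\cdots x_N^{n-2}$ by $(\det g)^{-2} = 1$. Granting that, the argument above is complete and purely formal; I expect no substantive obstacle beyond invoking the two cited results. (Alternatively, one could phrase this via the classical fact that a non-degenerate invariant bilinear form on a module of a finite group in characteristic zero restricts non-degenerately to invariants because $\A'_{f,G} = (\A'_{f,G})^G \oplus W$ with $W$ the sum of nontrivial isotypic components, and the invariant form must vanish on any pairing of inequivalent isotypic pieces — but the one-line averaging computation above is cleaner and self-contained.)
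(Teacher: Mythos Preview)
Your proof is correct and is essentially identical to the paper's own argument: both take a nonzero invariant element, pick any element pairing nontrivially with it via non-degeneracy on the full space, and then use $G$-invariance of $\eta_{f,G}$ (Proposition~\ref{proposition: form G-invariant}) to show that its $G$-average still pairs nontrivially. The only cosmetic difference is that you normalize by $|G|$ (using the projector $\pi$) whereas the paper uses the unnormalized symmetrization $\mathrm{sym}$.
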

\begin{proof}
Let $\alpha\in(\A_{f,G}')^G$ be a nonzero element. 
For any $\beta\in\A'_{f,G}$ we have 
\begin{align*}
 \eta_{f,G}(\alpha, \mathrm{sym}(\beta)) & =\sum_{w\in G} \eta_{f,G}(\alpha, w^*(\beta)) && \textit{(Proposition~\ref{proposition: form G-invariant})}
 \\
 &= \sum_{w\in G} \eta_{f,G}((w^{-1})^*\alpha, \beta) =|G| \cdot \eta_{f,G}(\alpha, \beta). 
\end{align*}
Since $\eta_{f,G}$ is non--degenerate on $\A'_{f,G}$, this equality gives the statement.
\end{proof}

\begin{theorem}
    Let $G^d\subseteq \SL_N(\CC)$. 
    The Hochschild cohomology ring $\ccHH^*(\CC[\bx]\rtimes G,f)$ is a $\ZZ/2\ZZ$--commutative Frobenius algebra.
\end{theorem}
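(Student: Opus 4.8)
The plan is to read off the statement from the structural results already in place. By Theorem~\ref{theorem: Shklyarov} there is a $\CC$-algebra isomorphism $\ccHH^*(\CC[\bx]\rtimes G,f)\cong(\A'_{f,G})^G$ respecting the $\ZZ/2\ZZ$-grading, so it is enough to show that $(\A'_{f,G})^G$ — with the cup product and the bilinear form $\eta_{f,G}$ both restricted from $\A'_{f,G}$ — is a finite-dimensional, unital, associative, $\ZZ/2\ZZ$-commutative algebra equipped with a non-degenerate invariant pairing. It is a subalgebra of $\A'_{f,G}$ because the $G$-action is by algebra automorphisms (conjugation on Hochschild cochains); it is finite-dimensional since $G$ is finite and each $\Jac(f^u)$ is finite-dimensional; it is associative because $\A'_{f,G}\cong\ccHH^*(\CC[\bx],f;\CC[\bx]\rtimes G)$ is; and its unit is $\xi_\id$, which lies in $(\A'_{f,G})^G$ by Eq.~\eqref{eq: G--action on id} and Proposition~\ref{action of sym on diag}.

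The $\ZZ/2\ZZ$-commutativity I would deduce from the braided commutativity~\eqref{eq: braided commutativity}. Take $G$-invariant elements $a=\sum_u a_u$ and $b=\sum_v b_v$, homogeneous of $\ZZ/2\ZZ$-parities $|a|,|b|$, with $a_u\in\A'_u$, $b_v\in\A'_v$; note that every $u$ in the support of $a$ satisfies $|\xi_u|=|a|$ and likewise for $b$, and that $|\xi_u|$ is constant along conjugacy classes since it depends only on $N-N_u$. Then
\begin{align*}
a\cup b=\sum_{u,v}a_u\cup b_v
&=\sum_{u,v}(-1)^{|\xi_u||\xi_v|}\,b_v\cup(v^{-1})^*(a_u)
\\
&=(-1)^{|a||b|}\sum_v b_v\cup(v^{-1})^*(a)=(-1)^{|a||b|}\,b\cup a,
\end{align*}
where in the last line we used that $a$ is $G$-invariant, so $(v^{-1})^*(a)=a$, and that $(v^{-1})^*$ sends $\A'_u$ to $\A'_{v^{-1}uv}$ without changing the $\ZZ/2\ZZ$-parity. (This is, of course, also an instance of the general graded-commutativity of Hochschild cohomology.)

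For the Frobenius structure I would simply restrict the identity already proved globally: Theorem~\ref{theorem: Frobenius algebra before invariants} gives $\eta_{f,G}(x\cup y,z)=\eta_{f,G}(x,y\cup z)$ for all $x,y,z\in\A'_{f,G}$, hence in particular on $(\A'_{f,G})^G$. Non-degeneracy of the restriction is exactly Corollary~\ref{corollary: pairing on invariants}, which rests on the $G$-invariance of $\eta_{f,G}$ established in Proposition~\ref{proposition: form G-invariant} — and this is the place where the hypothesis $G^d\subseteq\SL_N(\CC)$ enters. Under the same hypothesis $\eta_{f,G}$ is moreover $\ZZ/2\ZZ$-symmetric, as recorded after its definition, since $(-1)^{M_u}\det(g)\mathcal H_u=\mathcal H_{u^{-1}}$. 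Combining these facts, $(\A'_{f,G})^G$, and therefore $\ccHH^*(\CC[\bx]\rtimes G,f)$, is a $\ZZ/2\ZZ$-commutative Frobenius algebra.

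This theorem carries no new computational burden: its substance is already contained in the multiplication table of Sections~\ref{section: multiplication table}--\ref{section: products in mixed sectors}, in the evaluation of the products $\xi_u\cup\xi_{u^{-1}}$ and of the Hessians $\mathcal H_u$ feeding Theorem~\ref{theorem: Frobenius algebra before invariants}, and in the equivariance input of Proposition~\ref{proposition: form G-invariant}. The only point that needs a moment's care in the present argument is the sector bookkeeping in the commutativity step — tracking that conjugation by $v$ permutes among themselves the $u$-sectors supporting an invariant element and preserves the $\ZZ/2\ZZ$-grading, so that the local braiding signs $(-1)^{|\xi_u||\xi_v|}$ combine into the single global sign $(-1)^{|a||b|}$.
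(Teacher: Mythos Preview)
Your proof is correct and follows essentially the same route as the paper: reduce via Theorem~\ref{theorem: Shklyarov} to $(\A'_{f,G})^G$, obtain graded commutativity from the braided commutativity~\eqref{eq: braided commutativity} applied to $G$-invariant elements, and assemble the Frobenius structure from Theorem~\ref{theorem: Frobenius algebra before invariants} together with Corollary~\ref{corollary: pairing on invariants}. The only minor imprecision is your appeal to the $\ZZ/2\ZZ$-symmetry ``recorded after the definition'' of $\eta_{f,G}$: that remark uses the $M_u$-grading, not the $d_u$-grading governing commutativity here, and the paper instead deduces the relevant graded symmetry on invariants directly from $\eta(a,b)=\eta(\xi_\id,a\cup b)$ combined with the graded commutativity you have already established.
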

\begin{proof}
    We have by Theorem~\ref{theorem: Shklyarov} that $\ccHH^*(\CC[\bx]\rtimes G,f) \cong (\A'_{f,G})^G$. By Corollary~\ref{corollary: pairing on invariants} the latter algebra is endowed with a non--degenerate pairing. It follows from this corollary and Theorem~\ref{theorem: Frobenius algebra before invariants} that this pairing is also Frobenius w.r.t. $\cup$--product. Now for any $x \in (\A'_{f,G})^G$ and $u \in G$ we have $(u^{-1})^*(x) = x$, what gives us
    \[
        x \cup \lfloor p \rfloor \xi_u = (-1)^{|p||\xi_u|} \lfloor p \rfloor \xi_u \cup (u^{-1})^*(x) = (-1)^{|p||\xi_u|} \lfloor p \rfloor \xi_u \cup x
    \]
    by braided commutativity in $\A'_{f,G}$. This together with Theorem~\ref{theorem: Frobenius algebra before invariants} implies that the bilinear form is also $\ZZ/2\ZZ$--symmetric.

\end{proof}

\section{Examples}

\subsection{Example 1}
Consider $f = x_1^3 + x_2^3 + x_3^3$ with the symmetry groups 
\[
 G_1 := \langle(1,2,3),J\rangle \text{ and } G_2:= \langle(1,2,3),t_1t_2^{-1}\rangle
\]
where we denote $J: =t_1t_2t_3$.
As a vector space we have
\[
    \ccHH^*(\CC[\bx]\rtimes G_k,f) \cong \left(\A'_{f,G_k}\right)^{G_k} = \CC \langle \xi_\id, \lfloor x_1x_2x_3\rfloor\xi_\id, \xi_J, \xi_{J^2} \rangle.
\]

Indeed, for both $G_1$ and $G_2$ the fixed loci of $J$ and $J^2$ are zero, and by Proposition~\ref{action of sym on diag} $(1,2,3)^*\xi_{J^a}=\xi_{J^a}$. We have $(t_1t_2^{-1})^*\xi_{J^a}=\xi_{J^a}$ by Eq~\eqref{diagonal action}.

For any $g \in G_k^d$, the fixed locus of $u = (1,2,3)g$ is one--dimensional giving the sector $\A'_{u} = \CC \langle \lfloor 1 \rfloor \xi_u, \lfloor\widetilde x_u \rfloor\xi_u \rangle$. 
By Corollary~\ref{action of diag on special} we have $J^*(\xi_{(1,2,3)g}) = \zeta_3 \cdot \xi_{(1,2,3)g}$. It follows that $(\A'_u)^J = 0$. The same holds for $u = (1,3,2)g$.

By Eq~\eqref{diagonal action} we have $J^*(\xi_{g})=\zeta_3 \xi_{g}$ giving $(\A'_g)^J = 0$.

In the identity sector we have $\A'_\id = \CC[x_1, x_2, x_3]/(x_1^2, x_2^2, x_3^2)\cdot\xi_\id$ with the only $J$-invariant elements being $\xi_\id$ and $\lfloor x_1x_2x_3\rfloor \xi_\id$, which are also $G_k$--invariant.

The only non--vanishing product of $\ccHH^*(f,G_k)$ not involving $\xi_\id$ is
\[
    \xi_J\cup\xi_{J^2}=\left(\frac{3}{\zeta_3-1}\right)^3\lfloor x_1x_2x_3\rfloor\xi_\id 
\]

In mirror symmetry the algebra $\ccHH^*(\CC[\bx]\rtimes G_k,f)$ is considered as a B--side model. Its mirror A--side model is given by a smooth elliptic curve 
\[
 C_f =\lbrace(x_1:x_2:x_3) \in \PP^2 |x_1^3+x_2^3+x_3^3=0\rbrace.
\]
Let $G'_k$ be the image of $G_k$ in $\mathrm{PGL}_3$. The nontrivial elements of $G'_k$ act on $C_f$ without fixed points, giving a trivial action on the cohomology ring $\mathrm{H}^*(C_f,\CC)$. It follows that the Chen-Ruan cohomology ring of $C_f$ is isomorphic to the ordinary cohomology ring for both groups. We conclude that there is a mirror symmetry isomorphism $\ccHH^*(\CC[\bx]\rtimes G_k,f)\simeq \mathrm{H}^*(C_f,\CC)$.

\subsection{Example 2}
Consider $f = x_1^4 + x_2^4 + x_3^4+x_4^4$ with the symmetry group
\[
 G := \langle J, (1,2)(3,4), (1,3)(2,4)\rangle
\]
where we denote $J = t_1t_2t_3t_4$.

The fixed loci of elements of the form $(\sigma, J^a)$ are two--dimensional if $\sig\ne\id$, $a\equiv 0\mod 2$ and are equal to zero for all other nonidentity elements.

For $a=1,2,3$, the vectors $\xi_{J^a}$ are fixed by the action of $G$ by Eq~\eqref{diagonal action} and Proposition~\ref{action of sym on diag}. These vectors span the $3$--dimenional subspace of $\ccHH^*(\CC[\bx]\rtimes G,f)$.

For $a = 1,3$ we have by  Proposition~\ref{trans on trans} and Proposition~\ref{action of sym on diag} $$((1,2)(3,4))^*(\xi_{((1,2)(3,4), J^a)})=\xi_{((1,2)(3,4), J^a)}, \enskip ((1,3)(2,4))^*(\xi_{((1,2)(3,4), J^a)})=\xi_{((1,2)(3,4), J^a)}$$ and by Corollary~\ref{action of diag on nonspecial} also 
$$
    J^*(\xi_{((1,2)(3,4), J^a)})=\xi_{((1,2)(3,4), J^a)}.
$$ 
We conclude that the vectors $\xi_{(\sig, J^a)}$ with $\sig\ne\id$ and $a = 1,3$ are $G$-invariant. There are total $6$ of those.

For $a = 0, 2$ by Corollary~\ref{action of diag on special} we have 
$$
J^*(\xi_{((1,2)(3,4), J^a)})=-\xi_{((1,2)(3,4), J^a)}.
$$ 
Furthermore by  Proposition~\ref{trans on trans} we have 
\begin{align*}
& ((1,2)(3,4))^*(\xi_{((1,2)(3,4), J^a)})=\xi_{((1,2)(3,4), J^a)}, 
\\
& ((1,3)(2,4))^*(\xi_{((1,2)(3,4), J^a)})=-\xi_{((1,2)(3,4), J^a)}.
\end{align*}
It follows that the only fixed vector in $((1,2)(3,4), J^a)$--th sector is 
\[
    \lfloor (x_1+x_2)^2-(x_3+x_4)^2\rfloor \xi_{((1,2)(3,4), J^2)}. 
\]
The same holds for all the sectors of the form $(\sigma, J^a)$ with $\sig\ne\id, a=0,2$. There are total $6$ of such sectors spanning in total the $6$--dimensional $G$--invariant vector space.

It remains to compute the $G$-invariants in the identity sector. Elements $\lfloor (x_1x_2x_3x_4)^a\rfloor\xi_\id$ for $a=0,1,2$ are clearly invariant. The $J$-invariant vectors are given by the homogenous polynomials of degree $4$. Since $\lfloor x_i^3 \rfloor =0$, the $G$--invariants are obtained by symmetrizations of either 
$x_i^2x_jx_k$ or $x_i^2x_j^2$. We get $3$ total invariant elements of each type:
\begin{align*}
    & \lfloor x_2 x_3 x_1^2+x_2^2 x_4 x_1+x_3^2 x_4 x_1+x_2 x_3 x_4^2 \rfloor \xi_\id,
    \
    \lfloor x_2 x_4 x_1^2+x_3 x_4^2 x_1+x_2^2 x_3 x_1+x_2 x_3^2 x_4 \rfloor \xi_\id,
    \\
    &\quad \quad \lfloor x_3 x_4 x_1^2+x_2 x_3^2 x_1+x_2 x_4^2 x_1+x_2^2 x_3 x_4 \rfloor \xi_\id;
    \\
    & \lfloor x_1^2 x_2^2+x_3^2 x_4^2 \rfloor \xi_\id, \quad \lfloor x_1^2 x_3^2+x_2^2 x_4^2 \rfloor \xi_\id, \quad \lfloor x_2^2 x_3^2+x_1^2 x_4^2 \rfloor \xi_\id.
\end{align*}

We see that $\ccHH^*(\CC[\bx]\rtimes G)$ is $24$-dimensional in this case. The only nonzero products by the vectors not in the identity sector are 
\begin{align*}
& \xi_{J}\cup\xi_{J^{-1}}=\left(\frac{4}{\zeta_4-1}\right)^4\lfloor(x_1x_2x_3x_4)^2\rfloor\xi_\id,
\\
&\lfloor (x_1+x_2)^2-(x_3+x_4)^2\rfloor\xi_{(1,2)(3,4)J^a}\cup\lfloor (x_1+x_2)^2-(x_3+x_4)^2\rfloor\xi_{(1,2)(3,4)J^{-a}}
\\
& \quad =\lfloor (x_1^2-x_3^2)^2\Phi_{12}\Phi_{34}\rfloor\xi_\id=-32\lfloor(x_1x_2x_3x_4)^2\rfloor\xi_\id,
\\
& \xi_{((1,2)(3,4), J^a)}\cup \xi_{((1,2)(3,4), J^a)}=16 \left(\frac{4}{\zeta_4^{2a}-1}\right)^2\lfloor(x_1x_2x_3x_4)^2\rfloor\xi_\id=64\lfloor(x_1x_2x_3x_4)^2\rfloor\xi_\id
\end{align*}
where $a \equiv 0 \mod 2$.

\subsection{Example 3}
Consider $f = x_1^4 + x_2^3 + x_3^3$ with the groups $G^d = \langle g \rangle$ and $G^s := \langle \sigma\rangle$ for $g = t_2^2t_3$, $\sigma := (2,3)$. In \cite{BTW17} it was shown that there is a $\CC$--algebra isomorphism $\tau_0: \Jac(f) \cong \ccHH^*(\CC[\bx]\rtimes G^d,f)$. We show that $\ccHH^*(\CC[\bx]\rtimes G^s,f) \cong \ccHH^*(\CC[\bx]\rtimes (G^s \ltimes G^d),f)$.
\\
\\
The algebra $\ccHH^*(f, G^d)$ is generated by $\lfloor x_1 \rfloor \xi_\id$, $\xi_g$, $\xi_{g^2}$ subject to:
\[
    (\lfloor x_1 \rfloor \xi_\id)^{3} = 0, \ (\xi_g)^{2} = 0, \ (\xi_{g^2})^{2} = 0
\]
ang the isomorphism $\tau_0$ is just given by
\[
    \lfloor x_1 \rfloor \mapsto \frac{1}{3} \lfloor x_1 \rfloor \xi_{\id}, 
    \
    \lfloor x_2 \rfloor \mapsto \frac{1}{\sqrt{3}} \xi_{g}, 
    \
    \lfloor x_3 \rfloor \mapsto \frac{1}{\sqrt{3}} \xi_{g^2}.
\]
Under this map we have also
\begin{align}
    \lfloor x_2 + x_3\rfloor \mapsto \frac{1}{\sqrt{3}} (\xi_g + \xi_{g^2}), \
    \lfloor x_2x_3\rfloor \mapsto \frac{1}{3} \xi_g \cup \xi_{g^2} = 3 \lfloor x_2x_3 \rfloor \xi_\id.
\end{align}

Denote $\A' := \ccHH^*(\CC[\bx]\rtimes G^s,f)$ and $\A'' := \ccHH^*(\CC[\bx]\rtimes (G^s \ltimes G^d),f)$.
The algebra $\A'$ is generated by
\[
    v'_{\alpha,0} := \lfloor x_1^\alpha \rfloor \xi_\id, \ v'_{\alpha,1} := \lfloor x_1^\alpha (x_2+x_3) \rfloor \xi_\id, \ v'_{\alpha,2} := \lfloor x_1^\alpha (x_2x_3) \rfloor \xi_\id, \quad 0 \le \alpha \le 2.
\]
with a 'usual' $\Jac$--product. In particular $(v'_{1,0})^3 = 0$, $(v'_{0,1})^2 = 2 v'_{0,2}$ and $(v'_{0,2})^2 = 0$.
\\
\\
Concerning the algebra $\A''$ note that 
\begin{align*}
\sigma^*(\xi_\sigma) &= - \xi_\sigma, \quad  (\sigma g)^* (\xi_{\sigma  g}) = - \xi_{\sigma  g}, \quad (\sigma g^2)^* (\xi_{\sigma  g^2}) = - \xi_{\sigma  g^2},
\\
\sigma^*(\xi_g) &= \xi_{g^2}, \quad \sigma^*(\xi_{g^2}) = \xi_{g}.
\end{align*}
Therefore the algebra $\A''$ is generated by 
\[
    v''_{\id; \alpha, 0} := \lfloor x_1^\alpha \rfloor \xi_\id, 
    \ 
    v''_{\id; \alpha, 2} := \lfloor x_1^\alpha \cdot x_2x_3 \rfloor \xi_\id, \quad 0 \le \alpha \le 2,
\]
and
\[
    v''_{tw,\alpha} := \lfloor x_1^\alpha \rfloor (\xi_{g} + \xi_{g^2}). \quad 0\le\alpha\le 2
\]
We have $H_{g,g^{-1}} = 9 \lfloor x_2x_3 \rfloor$ and 
\[
    v''_{tw,\alpha} \cup v''_{tw,\beta} = - 6  \cdot v''_{\id,\alpha+\beta,2} \cdot \delta_{\alpha+\beta \le 2}.
\]
Consider the map $\A' \to \A''$ given by
\begin{align}
    v'_{\alpha,0} \mapsto v''_{\id; \alpha, 0}, 
    \ v'_{\alpha, 1} \mapsto \frac{\sqrt{-1}}{\sqrt{3}} \cdot v''_{tw; \alpha}, 
    \ v'_{\alpha, 2} \mapsto v''_{\id; \alpha, 2}.
\end{align}
The computations above show that this is an isomorphism. 
Note that it is essentially the restriction of the isomorphism $\tau_0$ to the $G^s$--fixed subspace.

\end{document}